\let\origcitation\citation
  \def\citation#1{\g@addto@macro\mycites{,#1}\origcitation{#1}}}
\newcommand{\dom}{{\rm dom\,}}
\newcommand{\epi}{{\rm epi\,}}
\newcommand{\ds}{\displaystyle}
\newcommand{\nexto}{\kern -0.54em}
\newcommand{\R}{\mathbb{R}}
\newcommand{\dN}{\mathbb{N}}
\newcommand{\la}{\langle}
\newcommand{\ra}{\rangle}
\newcommand{\proofbox}{\hspace{\fill}{$\Box$}}
\newtheorem{algorithm}{Algorithm}[section]
\newtheorem{lemma}{Lemma}[section]
\newtheorem{theorem}{Theorem}[section]
\newtheorem{corollary}{Corollary}[section]
\newtheorem{proposition}{Proposition}[section]
\newtheorem{fact}{Fact}[section]
\newtheorem{remark}{Remark}[section]
\newtheorem{definition}{Definition}[section]
\newenvironment{proof}{Proof.}{\proofbox}
\newcommand{\argmin}{\operatornamewithlimits{argmin}}
\begin{document}

\title{\bf An Inexact Deflected Subgradient Algorithm in Infinite Dimensional spaces}

\author{
Regina S. Burachik\thanks{Mathematics, UniSA STEM, University of South Australia, Mawson Lakes, S.A. 5095, Australia. Emails:~regina.burachik@unisa.edu.au, xuemei.liu@mymail.unisa.edu.au.}
\qquad
Xuemei Liu\footnotemark[1]}

\maketitle

\begin{quotation}
\begin{abstract}
\noindent We propose a duality scheme for solving constrained nonsmooth and nonconvex optimization problems in a reflexive Banach space. We establish strong duality for a very general type of augmented Lagrangian, in which we assume a less restrictive type of coercivity on the augmenting function. We solve the dual problem (in a Hilbert space) using a deflected subgradient method via this general augmented Lagrangian. We provide two choices of step-size for the method. For both choices, we prove that every weak accumulation point of the primal sequence is a primal solution. We also prove strong convergence of the dual sequence.
\end{abstract}
\end{quotation}

\begin{verse}
{\em Key words}\/: {\sf Augmented Lagrangian; Banach space; Nonconvex optimization; Nonsmooth optimization; Subgradient methods; Duality scheme; Penalty function methods.}
\end{verse}

\begin{verse} 
{\bf AMS subject classifications.} {\sf 49M29;  90C25; 90C26; 90C46; 65K10.}
\end{verse}

\pagestyle{myheadings}
\markboth{}{\sf\scriptsize Deflected Subgradient Algorithm and Optimal Control \ \ by R. S. Burachik and X. Liu}

\section{Introduction}

\noindent The (generalized) augmented Lagrangian duality theory is a powerful tool for solving nonconvex constrained optimization problems. Instead of tackling directly the constrained (primal) problem, we can recover the primal solution by solving the dual problem. This is particularly useful when the dual problem is easier to solve than the primal one. This is the case in the augmented Lagrangian duality framework. The dual problem is obtained from the Lagrangian function, which is a function that incorporates both the objective function and the information on the constraints. \emph {Strong duality\index{strong duality}} (i.e., when the primal and dual problems have the same optimal value) is a basic requirement when using a duality framework. For nonconvex problems, however, a positive gap may exist between the primal and dual optimal values when the classical Lagrangian is used. The augmented Lagrangian duality~\cite{RockWets,RYlagrType}, on the other hand, will have zero duality gap even in the nonconvex case, and will allow us to recover the solutions of the original (nonconvex) problem. We describe next the origins and up-to-date development of augmented Lagrangians.

\noindent The {\em linear augmented Lagrangian} as introduced in \cite[Chapter 11]{RockWets} is the sum of the classical Lagrangian and an augmenting term; in other words, it is the sum of the objective function, a linear term and an augmenting term. The {\em sharp Lagrangian}, introduced in \cite[Example 11.58]{RockWets}, is a linear augmented Lagrangian which adds to the classical linear term any norm function. 
The theory of Lagrangian duality is an active area of research, see, e.g., \cite{Bur2011,B2017,BKpenalty,BKpenaltypara,BYZ2017,HY2003,HY2005,NO2008,WYY2012,ZY2008,ZY2004,ZY2006,ZY2009,ZY2012}. In particular, \cite{ZY2006} and \cite{ZY2009} are for infinite dimensional settings. Dolgopolik in \cite{Dol2018} and \cite{Dol20182}  gives an excellent introduction on different types of Lagrangians and their applications in solving various kinds of problems. More general types of Lagrangian function have been studied in \cite{BY2011} and more recently in \cite{BKP2021}.


Our aim is to provide a primal--dual framework for the infinite dimensional setting using a general, although simple enough, Lagrangian. Our duality scheme is paired with an algorithmic framework, the Deflected Subgradient Method (DSG). Several works exist that use DSG algorithm within a similar primal--dual framework. Hence we make a comparison among these works and ours in terms of the range of applications (in finite or infinite dimensions), the Lagrangian, and the convergence results. Since some level of dual convergence is studied in all these works, we will rather focus on primal convergence results. 

Gasimov~\cite{Gdsg} proposed a deflected subgradient algorithm which uses the sharp Lagrangian, where the augmenting function is a norm, to solve finite dimensional optimization problems. This method has the desirable property that it generates a dual sequence with a strict improvement of the dual values in each iteration (dual strict improvement). It uses a Polyak-type step-size which requires the knowledge of the optimal dual value. One should note however that this knowledge can be difficult to obtain in practice, especially for non-convex problems. The analysis in Gasimov~\cite{Gdsg} establishes only convergence of the sequence of dual values to the optimal value. It goes without saying that  primal convergence is probably the most important feature of any primal--dual scheme, but unfortunately this is not studied in \cite{Gdsg}. Indeed, the example given by Burachik et al.~in~\cite[Example 1]{BGKIdsg} shows that the primal sequence in~\cite{Gdsg} may not converge to the primal solution. 

Later on, using the same primal--dual framework as~\cite{Gdsg},  the works \cite{BGKIdsg,BKMinexact2010,BIMdsg} developed further results on the step-size and convergence results.  In~\cite{BGKIdsg}, Burachik, Gasimov, Ismayilova and Kaya, establish the convergence of an \emph{auxiliary primal sequence} for the Polyak-type step-sizes.  In~\cite{BIMdsg} Burachik, Iusem and Melo propose an \emph{inexact version} of the DSG algorithm and prove \emph{auxiliary primal convergence} for inexact iterations. Burachik, Kaya and Mammadov~\cite{BKMinexact2010} devise an inexact version of the methods in~\cite{BGKIdsg,Gdsg} and show that the same convergence properties can be preserved when there is a level of inexactness in the solution of the subproblems. 

Burachik, Iusem and Melo~\cite{BIMdsg} use the sharp Lagrangian, and propose two choices of step-sizes which are independent of the optimal value. They establish primal convergence with these step-sizes, even for the case when the dual solution set is empty. Hence we will adopt these types of step-sizes for our work to inherit the nice primal convergence properties for our algorithm. 

These four works above, namely \cite{Gdsg,BGKIdsg,BIMdsg,BKMinexact2010}, are for the finite dimensional setting. Let us now recall the ones that apply to infinite dimensions, which will constitute the main motivation of the present paper.

Burachik, Iusem and Melo~\cite{BIMinexact2013} use a general type of augmented Lagrangian, which includes the Lagrangians used by the four previous works as particular cases. They extend the analysis in~\cite{BIMdsg} to the infinite dimensional setting. Namely, the primal problems are defined in a reflexive Banach space and the constraint functions are defined in a Hilbert space. They also use an inexact version of the DSG algorithm as in~\cite{BIMdsg}, and establish both the primal and dual convergence by adopting the types of step-sizes in~\cite{BIMdsg}.
 
Burachik and Kaya \cite{BKdsg}, and later Burachik, Freire and Kaya \cite{BFK2014}, incorporated a scaling symmetric matrix $A$ in the linear term of the Lagrangian in finite dimensions. This general type of augmented Lagrangian will be the focus of the present paper, since it provides a level of generality that allows, e.g., the full theoretical analysis of the penalty case, i.e., when $A$ is taken as the zero matrix.

The type of Lagrangian we focus on is an extension of the one in \cite{BFK2014,BKdsg} to infinite dimensions. It is associated with the following infinite dimensional equality constrained problem:
\begin{equation}\label{eq:generalPr}
    \min_{x\in X} \varphi(x)\;\;\;\; {\rm  s.t. } \;\;\;\; h(x)=0\,,
\end{equation}
where $X$ is a reflexive Banach space, $\varphi:X\to\R\cup\{\infty\}$ is lower semi-continuous;  and $h:X\to\R^m$ is continuous. The Lagrangian $l:X\times\R^m\times\R_+\to\R$ is defined as 
\begin{equation}\label{eq:generalLagrn}
    l(x,y,c):=\varphi(x)-\langle Ay,h(x)\rangle+ c\,\sigma(h(x))\,,
\end{equation}
where $x\in X$, $y\in\R^m$, $c\in [0,\infty)$, $A:\R^m\to \R^m$\!,\! is a continuous map, and $\sigma:\R^m\to\R_+$ verifies $\sigma(x)=0$ if and only if $x=0$ (see Definition \ref{def:basic} for more details). 

The sharp Lagrangian is a particular case of the general Lagrangian defined in~\eqref{eq:generalLagrn} when $A$ is the identity map and the penalty function $\sigma$ is any norm in $\R^m$. 
Moreover, the linear term reduces to the classical penalty function when $A$ is zero ~\cite{BKdsg,CC2010}. The numerical experiments in \cite{BKdsg} demonstrate that choosing suitable $A$ or $\sigma$ for various classes of problems can improve the computational performance.

In~\cite{BKdsg}, the exact version of the DSG algorithm is utilized and both auxiliary and primal convergence results for the Polyak-type step-size are obtained. Our analysis is inspired by~\cite{BKMinexact2010} in designing our inexact DSG algorithm and establishing the convergence results. Our results extend those in \cite{BKMinexact2010} in the following ways. 
\begin{itemize}
    \item[(i)] Our Lagrangian \eqref{eq:generalLagrn} has a general map $A$, which includes the identity and the zero map as particular cases. The case $A$ the identity matrix is studied in \cite{BKMinexact2010}, while the case $A=0$ is new. This opens the way for the analysis and implementation of penalty methods.
    \item[(ii)]  We establish strong duality for our very general type of Lagrangian. In particular, the function $\sigma$ we consider may not be coercive (see Definition \ref{def:assumptions_sigma}(a') and Theorem \ref{th:StrongDual}).
\end{itemize}

Regarding the study of the theoretical properties of our primal-dual setting, we point out that the proof of strong duality provided in \cite{BR2007} would cover our case. However, we provide our own proof here, because our type of Lagrangian allows us to provide a result which requires weaker assumptions (see more details in the paragraph right before Lemma~\ref{lem:seq_Lagrn}).

 In Section~\ref{ssec:PD} we show that our infinite dimensional framework (i) has no duality gap, and (ii) has a dual problem which is convex hence we can solve it using the techniques in convex analysis \cite{Pol1997,Ber1999,Nes2004}. In particular, the algorithm we introduce here can be seen as an epsilon-subgradient algorithm applied to the maximization of the dual function (see Remark \ref{rem:eps-sub}). 

\noindent  The paper is organized as follows. In Section~\ref{sec:Pre}, we give the preliminaries mainly on functional analysis, which help in building our primal--dual framework and establishing our convergence results. In Section~\ref{ssec:PD}, we give our primal--dual framework and its important assumptions. In particular, we will respond to the two questions above in this section, where we show the properties of this framework and give the related proofs. In Section~\ref{sec:DSG}, we state the DSG algorithm. We provide two choices of the step-size and establish the convergence results for both of the choices. Our conclusion is given in Section \ref{sec:conclusion}.

\section{Preliminaries}\label{sec:Pre}

We provide in this chapter some functional analysis tools for future use. Most of these results can be found in the text books of functional analysis such as those by Brezis~\cite{Brezis} and Kreyszig~\cite{Kreyszig}. We use Brezis's book on Functional Analysis~\cite{Brezis} as our main reference, and provide our own proof for results which are either not included in \cite{Brezis} or hard to track down elsewhere. The results we list here will be used in proving the properties of the primal--dual framework, as well as in establishing convergence results of the DSG algorithm.

Let $X$ be a reflexive Banach space, $X^*$ its topological dual (i.e., the set all continuous linear functionals from $X$ to $\R$), and $H$ a Hilbert space. 

We denote by $\langle \cdot, \cdot \rangle$ both the duality product in $X\times X^*$ and the scalar product in $H$. We denote by $\|\cdot\|$ the norm, where the same notation will be used for the norm both in $X$ and $H$.  We use the notation $\R_{++}$ for the positive real numbers,  $\R_{+\infty}:=\R\cup\{+\infty\}$ (sometimes $\R_{\infty}$ for short) and $\overline{\R}:=\R_{+\infty}\cup\{-\infty\}$. Given a function $g:X\to \overline{\R}$, define the {\em effective domain\index{effective domain}} of $g$ as $\dom g:=\{x\in X\::\: g(x)< +\infty\}$. We say that $g$ is {\em proper}\index{proper} if $g(x)>-\infty$ and $\dom g\neq \emptyset$.  Recall that the set $\epi g :=\{(x,t)\in X\times \R\::\: g(x)\le t\}$ is the {\em epigraph of}\index{epigraph} $g$, and that the set  $lev_{g}(\alpha):= \{x\in X\::\: g(x)\le \alpha\}$ is the $\alpha$-{\em level set of}\index{level set} $g$. Let $Y$ be a Banach space and consider a map $F:X\to Y$, the {\em graph of $F$}\index{graph} is the set $G(F):=\{(x,v)\in X\times Y\::\: v=F(x)\}$. Given $C\subset X$, the {\em indicator function of $C$}\index{indicator function} is defined as $\delta_C(v):=0$ if $v\in C$ and $+\infty$ otherwise. If $C=\{z\}$ is a singleton, we denote $\delta_{\{z\}}=:\delta_z$.

\subsection{Functional Analysis Tools}

 The topology induced by the norm (in $X$ or $H$), is called the {\em strong} topology. The weak topology in $X$ (weak topology in $H$) is the coarsest topology that makes all elements  of $X^*$ (all elements of $H^*=H$)  continuous.

 \begin{definition}[definitions related with the weak topology]\hfill\break
 \label{def:w-closed set}
 Let $X$ be a Banach space, and $H$ a Hilbert space. 
 \begin{itemize}
     \item[(a)] Let $K\subset X$. We say that $K$ is {\em weakly closed in $X$}\index{weakly closed} when it is closed w.r.t. the weak topology in $X$. 
     \item[(b)] We say that a function $h:X\to H$ is {\em weak-weak continuous}\index{weak-weak continuous} when $h^{-1}(U)\subset X$ is weakly open in $X$ for every $U\subset H$ weakly open in $H$.
      \item[(c)] We say that a function $h:X\to H$ is {\em weak-strong continuous}\index{weak-strong continuous} when $h^{-1}(U)\subset X$ is weakly open in $X$ for every $U\subset H$ strongly open in $H$. Strong-strong\index{strong-strong continuity} and strong-weak continuity\index{strong-weak continuity} are defined similarly.
      \item[(d)] We say that a function $\varphi:X\to\R_{+\infty}$ is {\emph {weakly lower semi-continuous}} \index{weakly lower semi-continuous} (\emph {w-lsc}) when it is lsc w.r.t. the weak topology in $X$. Namely, when $epi\; \varphi$ is w-closed.

 \end{itemize}
 \end{definition}

 We recall next some well-known facts from functional analysis.
 
  \medskip
 
  \begin{fact}\label{lem:Weakly Closed}
 Let $X$ be a Banach space, $H$ be a Hilbert space. Assume that $K\subset X$ is nonempty. The following hold.
     If $K\subset X$ is weakly compact, then it is weakly closed.
 \end{fact}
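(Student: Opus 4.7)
The plan is to reduce the statement to the standard general-topology fact that in any Hausdorff topological space, every compact subset is closed. Once this reduction is made, the only nontrivial task is to verify that the weak topology on $X$ is Hausdorff.

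First I would recall (or quote from a functional analysis text such as Brezis) the general topological result: if $(T,\tau)$ is a Hausdorff space and $K \subset T$ is $\tau$-compact, then $K$ is $\tau$-closed. Applied with $\tau$ equal to the weak topology on $X$, this gives the desired conclusion, \emph{provided} that the weak topology is Hausdorff.

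To verify the Hausdorff property of the weak topology, I would use the Hahn--Banach theorem. Given distinct points $x, y \in X$ with $x \neq y$, Hahn--Banach supplies a continuous linear functional $f \in X^*$ with $f(x) \neq f(y)$. Choosing disjoint open neighborhoods $U, V \subset \R$ of $f(x)$ and $f(y)$ respectively, the preimages $f^{-1}(U)$ and $f^{-1}(V)$ are weakly open (since elements of $X^*$ are by definition continuous with respect to the weak topology), disjoint, and contain $x$ and $y$ respectively. Hence the weak topology is Hausdorff.

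Combining these two observations completes the proof: $K$ is weakly compact in the Hausdorff space $(X, \text{weak})$, so $K$ is weakly closed. There is essentially no obstacle here; the statement is a routine application of two standard facts, and I include it only to make explicit the role played by Hahn--Banach in ensuring the separation property of the weak topology.
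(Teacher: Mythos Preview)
Your proof is correct and entirely standard. The paper itself does not prove this statement: it is listed as a ``well-known fact from functional analysis'' (Fact~\ref{lem:Weakly Closed}) and given without argument, so there is no paper proof to compare against. Your reduction to ``compact implies closed in Hausdorff spaces'' together with the Hahn--Banach verification that the weak topology is Hausdorff is exactly the textbook justification one would supply if asked to fill in the details.
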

   \medskip
 
 In most of what follows, when a topological property is mentioned by its own, this means that the property holds w.r.t. the strong (i.e., the norm) topology. For instance, if we write ``$A$ is closed", we mean ``$A$ is {\sc strongly} closed". If a property holds w.r.t. the weak topology, we will mention the term ``weak" (or ``weakly") explicitly (e.g., weakly closed, weakly compact, etc.). 
  \medskip
  
 It is well-known that, in any metric space, compactness is equivalent to sequential-compactness. To clarify what the situation is for the  case of the weak topology in a Banach space $X$, we recall the following definitions. 
 
 \begin{definition}[weak compactness; sequential compactness; coercive] \hfill\break 
 \label{def:weakly_compact_coercive}
Let $X$ be a Banach space, $A\subset X$ and $\varphi : X\to \R_{+\infty}$. 
\begin{itemize}
    \item[(a)]  The set $A$ is {\em weakly-compact}\index{weakly-compact} when its weak closure, denoted as ${\overline{A}}^w$,  is compact w.r.t the weak topology.
    \item[(b)] A set $A\subset X$ is {\em sequentially-compact}\index{sequentially-compact} (respectively, {\em weakly sequentially-compact}) when every sequence $\{x_n\}\subset A$ has a subsequence  converging strongly (respectively, weakly) to a limit in $A$. 
     \item[(c)] The function $\varphi : X\to \R_{+\infty}$ is {\em coercive}\index{coercive} when $\lim_{\|x\|\to \infty} \varphi(x)=+\infty$. 
\end{itemize}
\end{definition}

 \medskip
 
The equivalence between compactness and sequential-compactness in normed spaces allows the use of sequences when dealing with compact sets in $X$. To be able to deal with weakly compact sets in $X$ in terms of sequences, we recall the following classical well-known result, which is \cite[Problem 10(3), p. 448]{Brezis}.

\begin{theorem}[Eberlein-Smulian]\label{th:ES}
Let $E$ be a Banach space and let $A\subset E$. Set $B:={\overline{A}}^w$ (i.e., $B$ is the weak closure of $A$). The following statements are equivalent.
\begin{itemize}
    \item[(i)] $B$ is weakly compact.
    \item[(ii)] $B$ is weakly sequentially-compact.
\end{itemize}
\end{theorem}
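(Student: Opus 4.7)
The plan is to prove the two implications by rather different means, reflecting the fact that in an infinite-dimensional Banach space the weak topology is not metrizable, so compactness and sequential compactness need not \emph{a priori} coincide. The whole content of the Eberlein--Smulian theorem is that for the weak topology on Banach spaces they nonetheless do.

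For (i)$\Rightarrow$(ii), my approach would be to reduce to a separable setting. Given any sequence $\{x_n\}\subset B$, let $Y:=\overline{\mathrm{span}}\{x_n\}$, a separable closed subspace of $E$. Using separability of $Y$ together with Hahn--Banach, pick a countable family $\{f_k\}\subset E^*$ whose restrictions to $Y$ are norming. On the norm-bounded set $B\cap Y$, define
\[
    d(u,v):=\sum_{k=1}^{\infty} 2^{-k}\frac{|\langle u-v,f_k\rangle|}{1+|\langle u-v,f_k\rangle|}.
\]
The key lemma is that on $B\cap Y$ the $d$-topology agrees with the relative weak topology. Since $B\cap Y$ is weakly closed in the weakly compact $B$ (by Fact~\ref{lem:Weakly Closed}), it is itself weakly compact, hence compact in the metric topology and therefore sequentially compact. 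This yields a weakly convergent subsequence of $\{x_n\}$ with limit in $B$.

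For (ii)$\Rightarrow$(i), the natural path is via the bidual. First show $B$ is norm bounded: otherwise a sequence $x_n\in B$ with $\|x_n\|\to\infty$ would, by Banach--Steinhaus, admit some $f\in E^*$ with $\sup_n|\langle x_n,f\rangle|=\infty$, contradicting the extractability of a weakly convergent subsequence. Embed $E\hookrightarrow E^{**}$ canonically; by Banach--Alaoglu the weak-$*$ closure $\widetilde B\subset E^{**}$ is weak-$*$ compact. It then suffices to prove $\widetilde B\subset E$, for this forces $\widetilde B=\overline B^{\,w}=B$ and weak compactness follows. Given $x^{**}\in\widetilde B$, one constructs by an alternating (Whitley-type) induction a sequence $x_n\in B$ and finite sets $F_n\subset E^*$ so that each $x_n$ approximates $x^{**}$ arbitrarily well against $F_1\cup\cdots\cup F_n$, while each $F_n$ essentially witnesses the bidual distance from $x^{**}$ to the convex hull of $\{x_1,\dots,x_{n-1}\}$. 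Extracting a weakly convergent subsequence $x_{n_j}\rightharpoonup x_0\in E$ via hypothesis (ii) and a diagonal argument over $\bigcup_n F_n$, one concludes $x^{**}=x_0$ as elements of $E^{**}$.

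The main obstacle is the interleaved construction in the (ii)$\Rightarrow$(i) direction: the delicate point is not the extraction of $\{x_n\}$ but ensuring that the weak limit $x_0\in E$ coincides with $x^{**}\in E^{**}$ on \emph{every} $f\in E^*$, not just on those in $\bigcup_n F_n$. This is secured by choosing the $F_n$'s large enough to detect, up to arbitrarily small error, the $E^{**}$-norm on differences of the form $x^{**}-\mathrm{co}(\{x_k\}_{k<n})$, a bookkeeping step of moderate technicality. The other direction is considerably more transparent, relying almost entirely on the clean metrizability of the weak topology on bounded subsets of separable Banach spaces.
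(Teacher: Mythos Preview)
The paper does not prove Theorem~\ref{th:ES}; it merely quotes the Eberlein--Smulian theorem as a classical result, citing \cite[Problem~10(3), p.~448]{Brezis}. Your proposal therefore goes well beyond the paper, supplying a genuine proof sketch where the authors offer only a reference.

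Your outline is the standard Whitley argument and is essentially correct. Two small remarks. In (i)$\Rightarrow$(ii), the reason $B\cap Y$ is weakly closed in $B$ is not Fact~\ref{lem:Weakly Closed} (which only says weakly compact $\Rightarrow$ weakly closed) but rather that $Y$, being a closed linear subspace, is convex and norm-closed, hence weakly closed by Mazur; intersecting with the weakly compact $B$ then yields weak compactness of $B\cap Y$. You should also make explicit why the $d$-topology coincides with the relative weak topology on $B\cap Y$: the metric $d$ is Hausdorff (a norming family separates points of $Y$) and coarser than the weak topology, so the identity from the compact $(B\cap Y,\text{weak})$ onto the Hausdorff $(B\cap Y,d)$ is a homeomorphism. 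In (ii)$\Rightarrow$(i), your identification of the crux---that the interleaved choice of the sets $F_n$ is precisely what forces the weak limit $x_0\in E$ to agree with $x^{**}$ on all of $E^*$, not merely on $\bigcup_n F_n$---is exactly right, and this is indeed the step where most of the work lies.
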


Next we quote results that connect boundedness, closedness and compactness both in strong and weak topologies. The next result, a corollary of Bourbaki-Alaoglu's theorem, is \cite[Corollary 3.22]{Brezis}. This result is a consequence of a separation result for convex sets, together with Bourbaki-Alaoglu's theorem.

\begin{theorem}\label{th:BA1}
Let $E$ be a reflexive Banach space. Let $K\subset  E$ be a bounded,
closed, and convex subset of $E$. Then $K$ is weakly compact.
\end{theorem}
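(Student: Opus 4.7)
The plan is to combine two classical facts: (i) in a reflexive Banach space, bounded sets are contained in a weakly compact ball, and (ii) convex strongly closed sets are weakly closed. Together these put $K$ as a weakly closed subset of a weakly compact set, which yields weak compactness.

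First I would handle the boundedness. Since $K$ is bounded, there exists $R>0$ such that $K\subset R\,B_E$, where $B_E$ denotes the closed unit ball of $E$. Bourbaki--Alaoglu applied in $E^{**}$ (this is exactly the appeal the theorem makes to the preceding result) together with reflexivity of $E$ gives that $B_E$ is weakly compact: indeed, identifying $E$ with $E^{**}$ via the canonical injection, the unit ball of $E$ corresponds to the unit ball of $E^{**}$, which is compact in the weak-$*$ topology $\sigma(E^{**},E^{*})$; reflexivity makes this topology coincide with $\sigma(E,E^{*})$, the weak topology on $E$. Scaling by $R$ preserves weak compactness, so $R\,B_E$ is weakly compact.

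Next I would show that $K$ is weakly closed. Here I rely on Mazur's theorem, a standard consequence of the Hahn--Banach separation theorem: every strongly closed convex subset of a Banach space is weakly closed. Concretely, if $x_0\notin K$, then by Hahn--Banach one can separate the singleton $\{x_0\}$ strictly from the closed convex set $K$, producing $f\in E^{*}$ and $\alpha\in\R$ with $f(x_0)<\alpha<f(x)$ for all $x\in K$; the set $\{x\in E:f(x)<\alpha\}$ is a weak neighborhood of $x_0$ disjoint from $K$, so $K^c$ is weakly open. Consequently $K$ is a weakly closed subset of the weakly compact set $R\,B_E$, and therefore weakly compact itself, completing the proof.

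The main obstacle is really the two non-trivial inputs rather than the assembly: first, the equivalence of reflexivity with weak compactness of the closed unit ball (the Kakutani characterization, which the paper is invoking through its reference to Bourbaki--Alaoglu), and second, Mazur's lemma that strong and weak closures coincide on convex sets. Both are standard textbook results in Brezis, so in the write-up I would cite them rather than reprove them; the remaining bookkeeping (scaling, intersection of a closed set with a compact set) is routine.
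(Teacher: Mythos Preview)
Your proof is correct and follows exactly the approach the paper indicates: the paper does not give its own proof but simply cites \cite[Corollary 3.22]{Brezis} and notes that the result is ``a consequence of a separation result for convex sets, together with Bourbaki--Alaoglu's theorem,'' which is precisely your combination of Mazur's lemma (via Hahn--Banach separation) with the Kakutani/Bourbaki--Alaoglu weak compactness of the closed unit ball in a reflexive space.
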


\begin{corollary}\label{cor:CCB}
If $X$ is a Banach space, then every weakly compact set is closed and bounded.
\end{corollary}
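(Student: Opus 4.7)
The plan is to split the proof into two independent statements: that every weakly compact set $K \subset X$ is (strongly) closed, and that it is (strongly) bounded. Both parts are essentially classical consequences of Hahn--Banach and the Banach--Steinhaus theorem, so I would treat them separately.

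For closedness, I would first recall that the weak topology on $X$ is Hausdorff, which is a consequence of the Hahn--Banach separation theorem: any two distinct points of $X$ can be separated by a continuous linear functional, and the preimages of disjoint open intervals under this functional are disjoint weak neighbourhoods. Since $K$ is weakly compact and the weak topology is Hausdorff, $K$ is weakly closed. Finally, because the weak topology is, by definition, the coarsest topology making all of $X^*$ continuous, it is coarser than the strong (norm) topology; hence every weakly closed set is strongly closed, and $K$ is closed in the sense of Definition~\ref{def:w-closed set}.

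For boundedness, I would proceed through the canonical embedding $J:X\to X^{**}$, where $J(x)(f):=f(x)$ for $x\in X$ and $f\in X^*$; recall that $J$ is a linear isometry. The first step is to observe that for every $f\in X^*$, the set $f(K)\subset\R$ is compact, since $f$ is weak-strong continuous (indeed weak-strong continuity of elements of $X^*$ follows directly from the definition of the weak topology) and the continuous image of a compact set is compact. In particular $f(K)$ is bounded in $\R$, so $\sup_{x\in K}|J(x)(f)|<\infty$ for every $f\in X^*$. At this point the family $\{J(x)\}_{x\in K}\subset X^{**}$ is pointwise bounded on the Banach space $X^*$, and the Banach--Steinhaus (uniform boundedness) theorem yields $\sup_{x\in K}\|J(x)\|_{X^{**}}<\infty$. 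Since $J$ is an isometry, this is exactly $\sup_{x\in K}\|x\|_X<\infty$, so $K$ is norm-bounded.

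The most delicate step, and the one I expect to require the most care, is the boundedness argument: one has to pass from pointwise boundedness of the scalar sets $f(K)$ to uniform norm-boundedness of $K$ itself, and this requires identifying $K$ with a family of continuous linear functionals on the Banach space $X^*$ via $J$ and then invoking Banach--Steinhaus on $X^*$. The closedness half, by contrast, is essentially immediate from the Hausdorff property of the weak topology combined with the comparison of the weak and strong topologies.
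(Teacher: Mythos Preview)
Your proof is correct. The closedness half is exactly the standard argument (weak topology Hausdorff via Hahn--Banach, compact in Hausdorff implies closed, weakly closed implies strongly closed), and the boundedness half via the canonical embedding $J:X\to X^{**}$ and Banach--Steinhaus on $X^*$ is likewise the textbook route.

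As for comparison: the paper gives no proof of this corollary at all. It is listed among the results the authors ``quote'' from standard references (in the same block as Theorems~\ref{th:BA1} and~\ref{th:3.18}, both attributed to Brezis~\cite{Brezis}), and the introductory paragraph of Section~\ref{sec:Pre} explicitly says that proofs are supplied only for results that are hard to track down. So there is nothing in the paper to compare your argument against; you have supplied what the authors delegated to the literature.
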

\medskip

We quote next a result on sequential compactness that holds in reflexive Banach spaces.

\begin{theorem}\label{th:3.18}
Assume that $X$ is a reflexive Banach space and let $\{x_n\}$ be a bounded sequence in $X$. Then there exists a subsequence $\{x_{n_k}\}\subset \{x_n\}$ that converges in the weak topology.
\end{theorem}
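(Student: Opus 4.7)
The plan is to combine Theorem \ref{th:BA1} (Bourbaki--Alaoglu corollary) with the Eberlein--\v{S}mulian Theorem \ref{th:ES}, using Fact \ref{lem:Weakly Closed} to patch the two together.

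First, since $\{x_n\}$ is bounded, there exists $M>0$ with $\|x_n\|\le M$ for all $n$. Let
\[
D:=\{x\in X\::\: \|x\|\le M\}.
\]
This set $D$ is bounded, (strongly) closed, and convex, so Theorem \ref{th:BA1} yields that $D$ is weakly compact. By Fact \ref{lem:Weakly Closed}, $D$ is therefore weakly closed in $X$.

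Next, set $A:=\{x_n\::\: n\in\dN\}$ and let $B:=\overline{A}^w$ be the weak closure of $A$. Since $A\subset D$ and $D$ is weakly closed, we have $B\subset D$. Thus $B$ is a weakly closed subset of the weakly compact set $D$, hence $B$ is itself weakly compact. Now invoke the Eberlein--\v{S}mulian Theorem \ref{th:ES}: weak compactness of $B$ is equivalent to weak sequential compactness of $B$. Applying this latter property to the sequence $\{x_n\}\subset A\subset B$ produces a subsequence $\{x_{n_k}\}$ that converges weakly to some limit in $B\subset X$.

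The argument is essentially a three-line assembly once the preliminaries are in place; the only point that requires some care is the passage from ``the closed ball $D$ is weakly compact'' to ``the weak closure $B$ of $\{x_n\}$ is weakly compact,'' because Eberlein--\v{S}mulian is stated for $B=\overline{A}^w$ rather than for an arbitrary weakly compact superset of $A$. Fact \ref{lem:Weakly Closed} is exactly what is needed to know that $D$ is weakly closed and thus contains $B$, so that $B$ inherits weak compactness as a weakly closed subset of a weakly compact set. No further obstacle arises.
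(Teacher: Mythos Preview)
Your argument is correct. The paper itself does not supply a proof of Theorem~\ref{th:3.18}; it is quoted as a standard result (the label suggests it is Theorem~3.18 in Brezis~\cite{Brezis}). That said, the combination of tools you chose---Theorem~\ref{th:BA1}, Fact~\ref{lem:Weakly Closed}, and Theorem~\ref{th:ES}---is precisely the one the authors themselves employ later, in the proof of Lemma~\ref{lem:seq_Lagrn}(ii), where they argue that a bounded set $T$ is contained in a closed ball $B_0$, that $B_0$ is weakly compact by Theorem~\ref{th:BA1} and hence weakly closed by Fact~\ref{lem:Weakly Closed}, and therefore $\overline{T}^w\subset B_0$. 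So your derivation is entirely in the spirit of the paper.

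One small simplification: since $D$ is weakly compact, Fact~\ref{lem:Weakly Closed} gives that $D$ is weakly closed, hence $D=\overline{D}^w$. You can therefore apply Theorem~\ref{th:ES} directly with $A=D$ and $B=D$, obtaining that $D$ itself is weakly sequentially compact; then $\{x_n\}\subset D$ immediately yields the desired subsequence. This bypasses the intermediate set $B=\overline{\{x_n\}}^w$ altogether, though your version is of course also valid.
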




In our analysis, we will consider level sets of w-lsc functions. For future use, we prove below a property that directly follows from the results quoted above. This property is well-known, but hard to track down as stated below. So we provide the proof here for convenience of the reader.

\begin{corollary}\label{cor:coer}
Let $X$ be a reflexive Banach space and assume that $\varphi: X\to \R_{+\infty}$ is w-lsc. The function $\varphi$ is coercive if and only if all its level sets are weakly compact. In this situation, all the level sets are closed and bounded.
\end{corollary}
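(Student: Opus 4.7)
The plan is a standard two-direction argument, using as main tools the weak lower semicontinuity of $\varphi$ (to get weak closedness of level sets), Theorem \ref{th:3.18} (to extract weakly convergent subsequences in the reflexive setting), the Eberlein--Smulian Theorem \ref{th:ES} (to promote weak sequential compactness to weak compactness), and Corollary \ref{cor:CCB} (to get closedness and boundedness from weak compactness).

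For the forward direction, fix $\alpha\in\R$ and set $L_\alpha := lev_\varphi(\alpha)$. First I would observe that $L_\alpha$ is weakly closed: since $\varphi$ is w-lsc, $\epi\varphi$ is weakly closed in $X\times\R$, and the level set $L_\alpha$ is obtained as the preimage of a closed half-line intersected with the epigraph, so it is weakly closed in $X$. (One can also argue sequentially: if $x_n\rightharpoonup x$ with $\varphi(x_n)\le\alpha$, then w-lsc gives $\varphi(x)\le\liminf\varphi(x_n)\le\alpha$.) Next, coercivity of $\varphi$ forces $L_\alpha$ to be bounded: otherwise there would be $\{x_n\}\subset L_\alpha$ with $\|x_n\|\to\infty$ and $\varphi(x_n)\le\alpha$, contradicting $\lim_{\|x\|\to\infty}\varphi(x)=+\infty$. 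With $L_\alpha$ bounded and weakly closed in the reflexive Banach space $X$, I invoke Theorem \ref{th:3.18}: any sequence in $L_\alpha$ is bounded, hence has a weakly convergent subsequence, whose weak limit lies in $L_\alpha$ by weak closedness. Therefore $L_\alpha$ is weakly sequentially compact, and since $L_\alpha = \overline{L_\alpha}^w$, Theorem \ref{th:ES} (Eberlein--Smulian) upgrades this to weak compactness.

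For the reverse direction, suppose every level set of $\varphi$ is weakly compact. By Corollary \ref{cor:CCB}, each $L_\alpha$ is then closed and bounded (this also establishes the final sentence of the statement). To prove coercivity, I argue by contradiction: if $\varphi$ were not coercive, there would exist $M\in\R$ and a sequence $\{x_n\}$ with $\|x_n\|\to\infty$ and $\varphi(x_n)\le M$ for all $n$; but then $\{x_n\}\subset L_M$, contradicting that $L_M$ is bounded. Hence $\lim_{\|x\|\to\infty}\varphi(x)=+\infty$.

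The only subtle step is the forward direction: one cannot apply Theorem \ref{th:BA1} directly, because level sets of a general w-lsc function need not be convex; this is why the Eberlein--Smulian route via Theorem \ref{th:3.18} is necessary. Everything else is essentially bookkeeping with the definitions of coercivity, weak lower semicontinuity, and the level set.
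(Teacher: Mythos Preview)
Your proposal is correct and follows essentially the same route as the paper's proof: in the forward direction you use coercivity to get boundedness, Theorem~\ref{th:3.18} to extract a weakly convergent subsequence, w-lsc to place the limit back in the level set, and Eberlein--Smulian to pass from weak sequential compactness to weak compactness; in the reverse direction you use Corollary~\ref{cor:CCB} exactly as the paper does. Your explicit remark that Theorem~\ref{th:BA1} cannot be used directly (since level sets need not be convex) is a helpful observation that the paper leaves implicit.
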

\begin{proof}
Assume first that $\varphi$ is coercive and fix $\alpha\in \R$. We need to show that $lev_{\varphi}(\alpha)$ is weakly compact. By Eberlein-Smulian theorem, which is Theorem \ref{th:ES}, it is enough to show that $lev_{\varphi}(\alpha)$ is weakly sequentially compact. This means that every sequence in $lev_{\varphi}(\alpha)$ contains a subsequence weakly convergent to a limit, and this limit belongs to $lev_{\varphi}(\alpha)$. Indeed, take a sequence $\{x_k\}\subset lev_{\varphi}(\alpha)$. Since $\varphi$ is coercive this sequence is bounded, and by Theorem \ref{th:3.18} there exists a subsequence $\{x_{n_k}\}$ converging weakly to some $x\in X$. Since $\varphi$ is w-lsc, we can write
\[
\varphi(x)\le \liminf_{k\to \infty} \varphi(x_{n_k})\le \alpha,
\]
where the last inequality holds because $x_{n_k}\in lev_{\varphi}(\alpha)$ for all $k$. This implies that $x\in lev_{\varphi}(\alpha)$. Hence, the level sets $lev_{\varphi}(\alpha)$
are weakly sequentially compact. By Eberlein-Smulian theorem, they are weakly compact. Conversely, assume that $lev_{\varphi}(\alpha)$ is weakly compact for every $\alpha\in \R$. To show that $\varphi$ is coercive, it is enough to show that $lev_{\varphi}(\alpha)$ is bounded. This follows directly from Corollary \ref{cor:CCB}, which states that any weakly compact set must be bounded. Hence every level set $lev_{\varphi}(\alpha)$ is bounded and $\varphi$ is coercive. The last statement in the corollary is a direct consequence of Corollary \ref{cor:CCB}.
\end{proof}

\medskip

The next result is crucial in establishing the well-definedness of the algorithm we will present in section \ref{sec:DSG}. The result we quote below is \cite[Proposition 3.1.15]{BI}.

\begin{theorem}\label{th:existence}
Let $E$ be any topological space, and let $\varphi : E\to \R_{+\infty}$ be a proper function which is lsc (w.r.t. the topology of $E$). If  $A\subset E$ is compact and such that $\dom \varphi \cap A\neq \emptyset$, then $\varphi$ is bounded below on $A$ and it attains its minimum on $A$, which is finite.
\end{theorem}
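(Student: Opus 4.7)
The plan is to combine the closed-level-set characterization of lower semi-continuity with the finite intersection property enjoyed by compact subsets of any topological space. First I would set $m := \inf_{x\in A}\varphi(x)$; the assumption $\dom\varphi \cap A \neq \emptyset$ immediately gives $m < +\infty$. The goal then reduces to producing a point $x^*\in A$ at which this infimum is attained, because any such $x^*$ automatically gives $m = \varphi(x^*) > -\infty$ by properness, which delivers all three conclusions of the theorem at once.

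Next, for each $\alpha \in \R$ with $\alpha > m$, I would introduce the sublevel trace
\[
B_\alpha := A \cap lev_{\varphi}(\alpha) = \{x \in A : \varphi(x) \le \alpha\}.
\]
Since $\varphi$ is lsc, the level set $lev_{\varphi}(\alpha)$ is closed in $E$, so $B_\alpha$ is closed in $A$; by the very definition of infimum, $B_\alpha$ is also nonempty. Moreover, the family $\{B_\alpha\}_{\alpha > m}$ is nested, since $\alpha \le \alpha'$ implies $B_\alpha \subset B_{\alpha'}$, so any finite subfamily already shares a common point (the one indexed by the smallest $\alpha$). This gives the finite intersection property for the family of closed sets $\{B_\alpha\}$ in the compact space $A$, and compactness then delivers $\bigcap_{\alpha > m} B_\alpha \neq \emptyset$.

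Finally, I would pick $x^* \in \bigcap_{\alpha > m} B_\alpha$, which satisfies $\varphi(x^*) \le \alpha$ for every $\alpha > m$. Taking the infimum over such $\alpha$ yields $\varphi(x^*) \le m$, and since $\varphi$ takes values in $\R_{+\infty}$ we also have $\varphi(x^*) > -\infty$. This simultaneously forces $m > -\infty$ (so $\varphi$ is bounded below on $A$ and the minimum is finite) and $\varphi(x^*) = m$ (so the minimum is attained at $x^*$).

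I do not anticipate a serious obstacle: the only subtlety is to ensure that the argument covers the a priori possibility $m = -\infty$ uniformly. This is handled automatically, since in that hypothetical case the family would be indexed over all of $\R$ and the resulting $x^*$ would inherit $\varphi(x^*) = -\infty$, contradicting properness; finiteness of $m$ thus emerges as a by-product of the intersection argument rather than requiring a separate preliminary step.
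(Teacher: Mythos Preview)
Your proof is correct and is the standard argument via the finite intersection property; the paper itself does not prove this statement but merely quotes it from \cite[Proposition~3.1.15]{BI}. One cosmetic remark: you invoke properness to get $\varphi(x^*)>-\infty$, but in fact the codomain is already $\R_{+\infty}=\R\cup\{+\infty\}$, so this is automatic; properness is only needed (together with $\dom\varphi\cap A\neq\emptyset$) to ensure $m<+\infty$ at the outset.
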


\section{Primal and Dual Problems}\label{ssec:PD}

\subsection{Theoretical Framework}
The primal--dual framework we present here extends the one studied in~\cite{BKdsg,BFK2014} to the infinite dimensional setting. A particular case of our duality framework is the sharp Lagrangian in finite dimensions, as studied in~\cite{BGKIdsg,BKMinexact2010,BIMdsg,Gdsg}. The sharp Lagrangian has as augmenting term 
any norm, which motivates the terminology ``sharp". In infinite dimensions, Burachik, Iusem and Melo~\cite{BIMinexact2013} propose a related Lagrangian framework. The Lagrangian proposed in \cite{BIMinexact2013} uses a penalty function $\sigma(\cdot)$ with the same properties we study here, but 
the difference with our type of Lagrangian is in the linear term. Namely, our Lagrangian includes a map $A$ in the linear term, opening the way to the consideration of penalty methods for the particular case in which $A=0$. The framework in \cite{BIMinexact2013} deals with the case in which $A$ equals the identity map. Even though some of our proofs are similar to those in \cite{BIMinexact2013}, extra care is needed due to the presence of a general map~$A$. Since the identity map satisfies all the assumptions we make on the map $A$, all the results in \cite{BIMinexact2013} can be deduced from our analysis. We use a different and more involved method of proof 
for obtaining the main result in this chapter, namely, the strong duality property.

Let $X$ be a reflexive Banach space, and $\varphi:X\to \R_{\infty}$ be a proper function.  We consider the primal optimization problem
\begin{equation*}
(P)\qquad{\rm min}\;\;\!\varphi(x)\;\; {\rm  s.t. }\,\,x\ \text{in }X.
\end{equation*}

Following \cite[Section 2.2]{BR2007}, we embed problem $(P)$ into a family of parametrized problems by means of a function that coincides with the objective function when the parameter is zero. The tool we use is defined next.

\begin{definition}\label{def:PF}
A {\em dualizing parameterization} for $(P)$ is a
function $f:X\times H\to \bar{\R}$ that verifies $f(x,0)=\varphi(x)$ for all $x\in X$.  The {\em perturbation function} induced by this dualizing parameterization is defined as
$\beta :H\to \bar{\R}$ such that
\begin{equation}\label{prel:eq1}
\beta(z):=\inf_{x\in X} f(x,z).
\end{equation}
\end{definition}

The next definition, which is \cite[Definition 5.1]{BR2007}, will be a basic assumption for the dualizing parametrizations. It uses the concepts of weakly open and weakly compact sets. We recalled the latter concept in Definition \ref{def:weakly_compact_coercive}(a). Recall also that a set is weakly open set when its complement is weakly closed (see Definition \ref{def:w-closed set}(a)).

\medskip

\begin{definition}\label{level-f}
A function $f:X\times H\rightarrow \bar{\R}$ is said to be {\em weakly level-compact} if for each $\bar{z}\in H$ and $\alpha \in \R$
there exist a weakly open neighbourhood $U \subset H$ of $\bar{z}$, and a weakly compact set $B\subset X$, such that
\[ lev_{z,f}(\alpha):=\{x \in X: f(x,z)\leq \alpha \} \subset B\;\; \mbox{for all}\; z \in U.\]
In other words, there exist sets $U\subset H$ weakly open and $B\subset X$ weakly compact, such that $\bar{z} \in U$ and we have
\[ \bigcup_{z\in U} lev_{z,f}(\alpha):=\{x \in X: f(x,z)\leq \alpha\,\,\, \forall z\in U\} \subset B.\]
\end{definition}

If the duality parameterization is weakly-level compact, the corresponding perturbation function is {\em sequentially weak-lsc}.  Before establishing this fact, we recall next the definition.

\begin{definition}[sequentially weak-lsc function] \hfill\break 
\label{def:seq-w-lsc}
Let $\theta :H\rightarrow \R_{+\infty}$. We say that $\theta$
is {\em sequentially weakly lsc} if the following property holds.
\[
\hbox{If }u_n\rightharpoonup u, \hbox{ then } \theta(u)\le \liminf_{n\to \infty} \theta(u_n).
\]
\end{definition}

\begin{remark}[weak-lsc vs. sequentially weak-lsc]\hfill\break
\label{rem:w_lsc}
\rm In finite dimensions, or more generally in any metric space, there is no difference between semicontinuity and its sequential version. In an infinite dimensional Hilbert space, however, weak lsc as given in Definition \ref{def:w-closed set}(d) is more restrictive than its sequential version. In the former, the liminf inequality in Definition 
\ref{def:seq-w-lsc} must hold for any net weakly converging to a limit. Since a sequence is a particular case of a net, weak lsc implies sequential weak-lsc, and the converse, in general, does not hold. Indeed, while Definition \ref{def:w-closed set}(d) corresponds to weak closedness of the epigraph, Definition \ref{def:seq-w-lsc} corresponds to the latter set merely being sequentially weakly closed. 
\end{remark}

We will use the following type of functions for constructing our Lagrangian function.

\begin{definition}[augmenting function] \hfill\break 
\label{def:assumptions_sigma}
A function $\sigma :H\rightarrow \R_{+\infty}$ is an {\em augmenting function} if the following properties hold.
\begin{itemize}
    \item[(a)] The function $\sigma$ is proper, w-lsc and coercive (see Definitions \ref{def:w-closed set}(d) and \ref{def:weakly_compact_coercive}(c)).
    \item[(a')] The function $\sigma$ is proper, w-lsc, and satisfies the following condition: There exists $K_{\sigma}>0$ s.t. the set
    \[
    lev_{\sigma}(K_{\sigma})=\{z\in H\::\: \sigma(z)\le K_{\sigma}\},
    \]
    is bounded. We call this type of $\sigma$ {\em conditionally coercive}.
    \item[(b)] It holds that $\sigma(0)=0$ and $\ds\argmin_z\sigma(z)=\{0\}$.
\end{itemize}
\end{definition}

In what follows, we always assume that the function $\sigma$ used in the Lagrangian satisfies the assumptions of Definition \ref{def:assumptions_sigma}, either with (a) or (a'). Note that condition (a) (coercivity), is strictly stronger than (a') (conditional coercivity). If we are able to relax the requirements on $\sigma$ and just require conditional coercivity for $\sigma$, we will make it clear in our proofs. Otherwise, we may simply say that $\sigma$ is as in Definition \ref{def:assumptions_sigma}(a).  It is the less restrictive assumption on $\sigma$ the one we will use in our proof of strong duality. Before doing this, we proceed to establish the announced sequential w-lsc for the perturbation function. 

\begin{proposition}[sequential w-lsc of $\beta$]\hfill\break
\label{Prop:lsc_perturb}
Let $f:X \times Z \to \overline{\R}$ be weakly lower semicontinuous and weakly level-compact. Then the function $\beta$ defined by \eqref{prel:eq1} is sequentially-weakly lower semicontinuous.
\end{proposition}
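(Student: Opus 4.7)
The plan is a standard extraction-of-minimizing-sequence argument, exploiting weak level-compactness to trap the approximate minimizers in a weakly compact set, and then using weak lsc of $f$ to pass to the limit. Concretely, fix a sequence $z_n \rightharpoonup \bar z$ in $H$ and set $\ell := \liminf_{n\to\infty} \beta(z_n)$. If $\ell = +\infty$ there is nothing to prove, so assume $\ell < +\infty$. After passing to a subsequence (still denoted $\{z_n\}$) I may assume $\beta(z_n) \to \ell$. Pick any $\alpha > \ell$; for all sufficiently large $n$ we have $\beta(z_n) < \alpha$, so by definition of infimum one can choose $x_n \in X$ with $f(x_n, z_n) \le \alpha$.

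Next I would invoke weak level-compactness of $f$ at the point $\bar z$ with the level $\alpha$: there exist a weakly open neighbourhood $U$ of $\bar z$ in $H$ and a weakly compact set $B \subset X$ such that $lev_{z,f}(\alpha) \subset B$ for every $z \in U$. Since $z_n \rightharpoonup \bar z$, eventually $z_n \in U$, and thus $x_n \in lev_{z_n,f}(\alpha) \subset B$ for all large $n$. By the Eberlein--Smulian theorem (Theorem~\ref{th:ES}), the weakly compact set $B$ is weakly sequentially compact, so I can extract a subsequence $\{x_{n_k}\}$ weakly converging to some $\bar x \in X$.

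The final step is to pass to the limit. In the product $X \times H$, weak convergence is componentwise, so $(x_{n_k}, z_{n_k}) \rightharpoonup (\bar x, \bar z)$ weakly. Since $f$ is weakly lsc, and (as noted in Remark~\ref{rem:w_lsc}) weak lsc implies sequential weak lsc, I obtain
\[
f(\bar x, \bar z) \;\le\; \liminf_{k \to \infty} f(x_{n_k}, z_{n_k}) \;\le\; \alpha.
\]
Therefore $\beta(\bar z) \le f(\bar x, \bar z) \le \alpha$. Since $\alpha > \ell$ was arbitrary, letting $\alpha \downarrow \ell$ yields $\beta(\bar z) \le \ell = \liminf_{n\to\infty} \beta(z_n)$, which is sequential weak lower semicontinuity.

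The only genuinely delicate points are the eventual membership $z_n \in U$ (immediate from the definition of weak convergence applied to a weakly open set containing $\bar z$) and the implicit use of componentwise weak convergence in the product together with the passage from weak lsc (a net condition on $\epi f$) to its sequential counterpart. Both are standard, so the real content of the proposition is the weak level-compactness hypothesis, which is precisely what supplies the weakly compact trap $B$ needed to extract a weak limit of the approximate minimizers.
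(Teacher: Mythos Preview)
Your proof is correct and uses the same essential ingredients as the paper's: weak level-compactness to trap approximate minimizers in a weakly compact set, Eberlein--Smulian to extract a weakly convergent subsequence, and weak lsc of $f$ to pass to the limit. The paper argues by contradiction and introduces auxiliary functions $\tilde F(x):=\sup_{n\ge n_0} f(x,u_n)$ and $\tilde G(x):=\inf_{n\ge n_0} f(x,u_n)$, whereas your direct argument (fix $\alpha>\ell$, obtain $\beta(\bar z)\le\alpha$, then let $\alpha\downarrow\ell$) is cleaner and avoids that detour; both routes arrive at the same conclusion by the same mechanism.
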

\begin{proof}
Assume that $\beta$ is not sequentially weakly lower semicontinuous. This implies that there is a point $u$, a sequence $\{u_n\}_{n\in \dN}$, and $\varepsilon>0$ such that
\begin{itemize}
 \item[(i)] $u_n\rightharpoonup u$,
  \item[(ii)]$ \liminf_n \beta(u_n) < \beta(u) - \varepsilon.$
\end{itemize}
By weak-level compactness of $f$, there exists a weak neighborhood $W$ of $u$ such that the set
\[
\{x\in X\::\: f(x,z)\le \beta(u) - \varepsilon\}\subset B \text{ for all } z\in W,
\]
where $B$ is weakly compact in $X$. By (i), there exists $n_0\in \dN$ such that $u_n\in W$ for all $n\ge n_0$.  Therefore, for all $n\ge n_0$ we have that
\[
\{x\in X\::\: f(x,u_n)\le \beta(u) - \varepsilon\}\subset B.
\]
Calling $\tilde F(x):=\sup_{n\ge n_0} f(x,u_n)$, this implies that
\begin{equation}\label{set wc}
L:=lev_{\tilde F}(\beta(u) - \varepsilon)=\{x\in X\::\: \tilde F(x)\le \beta(u) - \varepsilon\}\subset B.
\end{equation}

Since $f(\cdot,v)$ is w-lsc for all $v\in H$, we deduce that $\tilde F$ is w-lsc too. Hence, the set $L$ is weakly closed. Since $L$ is a weakly closed subset of a weakly compact set, it is weakly compact.  We can apply now Eberlein-Smulian Theorem (Theorem \ref{th:ES}), to deduce that $L$ is weakly sequentially compact. By (ii) and the definition of liminf we can write
\[
\beta(u) - \varepsilon> \sup_{k\in \dN} \inf_{n\ge k} \beta(u_n)\ge \inf_{n\ge n_0} \beta(u_n).
\]
Take now $n_1\ge n_0$ such that for all $k\ge n_1$ we have
\[
\beta(u) - \varepsilon- 1/k>\inf_{n\ge n_0} \beta(u_n)=\inf_{n\ge n_0}\inf_{x\in X} f(x,u_n),
\]
where we used the definition of $\beta$ in the equality. Define now $\tilde G(x):=\inf_{n\ge n_0} f(x,u_n)$, so the above expression becomes
\[
\beta(u) - \varepsilon- 1/k>\inf_{n\ge n_0} \beta(u_n)=\inf_{x\in X} \inf_{n\ge n_0} f(x,u_n)=\inf_{x\in X} \tilde G(x),
\]
which holds for all $k\ge n_1$. Fix now an index $k\ge n_1$. By definition of infimum we can find  $x_k\in X$ such that $\tilde G(x_k)= \inf_{n\ge n_0} f(x_k,u_n)< \beta(u) - \varepsilon- 1/k$. Using a similar argument again for this fixed $k\ge n_1$, we can find $n_k\ge n_0$ and $u_{n_k}$ s.t. $f(x_k,u_{n_k})< \beta(u) - \varepsilon- 1/k$. Doing this for every $k\ge n_1$ and using the fact that $u_{n_k}\in W$ for all $k$ we deduce that the obtained sequence $(x_k)$ is contained in the sequentially compact set $L$. Thus there exists a subsequence of $(x_k)$ which is weakly convergent to a limit $\hat x\in L$. For simplicity, we still denote this weakly convergent sequence  by $(x_k)$. So we can assume that $x_k\rightharpoonup \hat x\in L$. Since $(u_{n_k})\subset (u_n)$ we have that $u_{n_k} \rightharpoonup u$. By w-lsc of $f$ and the definition of $\beta$ we obtain
\[
\beta(u)\le f(\hat x,u)\le \liminf_k f(x_{k},u_{n_k})\le \liminf_k \beta(u) - \varepsilon- 1/k= \beta(u) - \varepsilon,
\]
a contradiction. Therefore, $\beta$ must be sequentially weakly lsc.
\end{proof}

\bigskip
We present next the basic assumptions we need for the map $A$ (one of these involves the augmenting function $\sigma$ as in Definition \ref{def:assumptions_sigma}).

Assume that map $A:H\to H$ verifies the following properties: 
\begin{itemize}
    \item [${\bf\hypertarget{A0}{(A_0)}}$]
            $\sigma(z)\geq \|A(z)\|$, for all $z\in H$.
     \item [${\bf\hypertarget{A1}{(A_1)}}$]
            For every $y\in H$, the function $\langle A(\cdot), y \rangle$ is w-usc (i.e., $-\langle A(\cdot), y \rangle$ is w-lsc for every $y\in H$).
           
\end{itemize}

\begin{remark}[Assumptions $\bf (A_0)$--$\bf (A_1)$]\hfill\break 
\label{rem:assumption_A}
\rm
\begin{itemize}
    \item[(i)]
 By Definition~\ref{def:assumptions_sigma} (b), ${\bf\hyperlink{A0}{(A_0)}}$ trivially implies that $A(0)=0$. This assumption will be used in Proposition~\ref{prop:dualProperty}(iii) to derive the weak duality property. Assumption ${\bf\hyperlink{A0}{(A_0)}}$ is also important in obtaining a non-decreasing property of the dual function, as we will see in Proposition~\ref{prop:dualProperty}(ii).
 \item[(ii)]
  ${\bf\hyperlink{A1}{(A_1)}}$ plays a role in obtaining the strong duality for our primal--dual framework in Lemma \ref{lem:seq_Lagrn} and Theorem~\ref{th:StrongDual}.
\end{itemize}
 \end{remark}


We next list more assumptions on our primal--dual framework.
\begin{itemize}
    \item[\hypertarget{H0}{(H0)}] The objective function $\varphi:
X\to \R_{+\infty}$
is proper and w-lsc. 
    \item[\hypertarget{H1}{(H1)}] The function $\varphi$ has weakly compact level sets.
    \item[\hypertarget{H2}{(H2)}] The dualizing parameterization $f$ is proper (i.e., $\dom f\neq \emptyset$ and $f(x,z)>-\infty,$ \,$\forall \, (x,z)\in X\times H$), w-lsc and weakly level-compact (see Definition \ref{level-f}). 
\end{itemize}\medskip

We define next the problem dual to $(P)$, via our augmented Lagrangian function.

\begin{definition}[augmented Lagrangian and associated dual problem] \hfill\break 
\label{def:basic}
With the notation of Problem $(P)$, let
\begin{itemize}
    \item[(a)] $f$ be a dualizing parameterization as in Definition \ref{def:PF}, satisfying assumption {\rm \hyperlink{H2}{(H2)}},
     \item[(b)] $A:H\to H$ be a function verifying the assumptions ${\bf\hyperlink{A0}{(A_0)}}$--${\bf\hyperlink{A2}{(A_1)}}$.
      \item[(c)] $\sigma$ be an augmenting function as in Definition \ref{def:assumptions_sigma}, with (a') instead of (a).
\end{itemize}
The {\em augmented Lagrangian} for Problem $(P)$ is defined as
\begin{equation}\label{eq:Lagrn}
\ell(x,y,c):=\inf_{z\in H}\{f(x,z)-\langle A(z),y\rangle + c\sigma(z)\}.
\end{equation}
The {\em dual function} $q:H\times \R_+\to\R_{-\infty}$ is defined as
\begin{equation}
\label{eq:dual_function}
q(y,c):=\inf_{x\in X} \ell(x,y,c).
\end{equation} 
The \emph{dual problem} of $(P)$ is given by 
\begin{equation*}\label{eq:D_probm}
(D) \qquad {\rm maximize}\;\;\! q(y,c) \;\; {\rm  s.t. }\,\, (y,c)\in H\times \R_+.
\end{equation*}
Denote by $\ds M_P:=\inf_{x\in X}\varphi(x)$ and by $\ds M_D:=\sup_{(y,c)\in
H\times \R_+}q(y,c)$ the optimal values of the primal and dual
problem, respectively.
The primal and dual solution sets are
denoted by $S(P)$ and $S(D)$, respectively.
\end{definition}
\medskip
\begin{remark}[finite primal value for Problem $(P)$] \hfill\break 
\rm
By definition of duality parameterization and assumption \hyperlink{H0}{(H0)}, we have that $\varphi$ is proper, so by Definition \ref{def:PF}, the following holds
\begin{equation}
    \label{eq:primal_perturb}
M_P=\beta(0)<+\infty. 
\end{equation}
\end{remark}

\subsection{Properties of the Primal--Dual Setting}\label{sec:PD properties}

We next present some basic properties of the dual function given in Definition \ref{def:basic}.

\begin{proposition}[properties of the dual function] \hfill\break 
\label{prop:dualProperty} Let $q$ be the dual function which is defined in \eqref{eq:Lagrn}. The following facts hold.
\begin{itemize}
    \item[(i)] The dual function  $q$ is concave and
weakly upper-semicontinuous {\rm (w-usc)}.
    \item[(ii)] If $c \geq c_1$ then $q(y,c) \geq q(y,c_1)$ for all $y \in H$. In particular, if
$(y,c_1)$ is a dual solution, then also $(y,c)$ is a dual solution for all $c\geq c_1$.
    \item[(iii)] Assume ${\bf\hyperlink{A0}{(A_0)}}$ holds. The weak duality property holds for the $(P)-(D)$ primal -- dual framework, i.e. 
    \[
   M_D= \sup_{(y,c)\in H\times \R_+ }q(y,c)\le \inf_{x\in X}\varphi(x)=M_P,
    \]
    where $\varphi$ verifies {\rm \hyperlink{H0}{(H0)}}, and $q$ is as in Definition~\ref{def:basic}. 
\end{itemize}
\end{proposition}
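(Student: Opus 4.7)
The plan is to handle the three items by unfolding $q$ as a double infimum,
\begin{equation*}
q(y,c)=\inf_{x\in X}\,\inf_{z\in H}\{f(x,z)-\langle A(z),y\rangle+c\,\sigma(z)\},
\end{equation*}
and then exploiting the transparent dependence of the integrand on $(y,c)$. For each fixed pair $(x,z)$, the integrand $g_{x,z}(y,c):=f(x,z)-\langle A(z),y\rangle+c\,\sigma(z)$ is affine (hence concave) in $(y,c)$, and since $y\mapsto\langle A(z),y\rangle$ is a continuous linear functional on $H$, it is weakly continuous by the very definition of the weak topology on $H$. Thus each $g_{x,z}$ is continuous with respect to the product of the weak topology on $H$ and the standard topology on $\R_+$. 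Using that the infimum of a family of concave functions is concave, and the infimum of a family of w-usc functions is w-usc, the claim in (i) drops out by applying the infimum indexed over $(x,z)$.

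For (ii), I would use only the positivity of $\sigma$, which is immediate from Definition~\ref{def:assumptions_sigma}(b): since $0$ is the unique minimizer and $\sigma(0)=0$, we have $\sigma(z)\geq 0$ for every $z\in H$. Hence for $c\geq c_1\geq 0$,
\begin{equation*}
f(x,z)-\langle A(z),y\rangle+c\sigma(z)\;\geq\;f(x,z)-\langle A(z),y\rangle+c_1\sigma(z),
\end{equation*}
pointwise in $z$. Taking the infimum over $z$ preserves the inequality, giving $\ell(x,y,c)\geq \ell(x,y,c_1)$, and a further infimum over $x$ yields $q(y,c)\geq q(y,c_1)$. The ``in particular'' assertion is then immediate: if $q(y,c_1)=M_D$, the chain $M_D\geq q(y,c)\geq q(y,c_1)=M_D$ forces $(y,c)\in S(D)$.

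For (iii), I would first combine assumption ${\bf (A_0)}$ with $\sigma(0)=0$ to deduce $\|A(0)\|\leq \sigma(0)=0$, so that $A(0)=0$. Inserting the particular value $z=0$ into the infimum defining $\ell$ then produces
\begin{equation*}
\ell(x,y,c)\;\leq\;f(x,0)-\langle A(0),y\rangle+c\sigma(0)\;=\;\varphi(x),
\end{equation*}
so that $q(y,c)=\inf_{x\in X}\ell(x,y,c)\leq \inf_{x\in X}\varphi(x)=M_P$; taking the supremum over $(y,c)\in H\times\R_+$ delivers weak duality. The whole argument is essentially bookkeeping; the one place where minor care is warranted is the w-usc step in (i), which rests on the (elementary but crucial) observation that bounded linear functionals on $H$ are automatically weakly continuous, so that no regularity on $A$ beyond ${\bf (A_0)}$, invoked separately for (iii), enters at this stage.
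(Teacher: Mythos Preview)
Your proof is correct and follows essentially the same approach as the paper: both express $q$ as an infimum over $(x,z)$ of the affine functions $(y,c)\mapsto f(x,z)-\langle A(z),y\rangle+c\sigma(z)$ for (i), use the nonnegativity of $\sigma$ for (ii), and plug in $z=0$ together with $A(0)=0$ and $\sigma(0)=0$ for (iii). The only cosmetic difference is that the paper routes the weak-duality computation through the perturbation function $\beta(z)=\inf_x f(x,z)$ before specializing to $z=0$, whereas you bound $\ell(x,y,c)\le\varphi(x)$ directly; the content is identical.
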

\medskip
\begin{proof} \noindent(i) We show that $q$ is w-usc and concave simultaneously. By \eqref{eq:dual_function}, $q$ is the infimum of a family of w-usc and concave functions. Indeed, define $\psi_{xz}:H\times\R\to\R$, as $\psi_{xz}(y,c):=f(x,z)-\langle A(z),y\rangle + c\sigma(z)$. Then $\psi_{xz}$ is w-continuous and concave  (actually affine), w.r.t. the variable $(y,c)$. Now the concavity and weak-upper semicontinuity of $q$ follow from \eqref{eq:dual_function}.

We now proceed to show (ii). The fact that $q(y,\cdot)$ is non-decreasing follows directly from the definition and the fact that $\sigma(z)\ge 0$. Let now $(y,c_1)\in S(D)$. So $q(y,c_1)=M_D$. For every $c\ge c_1$ we use the non-decreasing property to write
\[
M_D\ge q(y,c)\ge q(y,c_1)=M_D,
\]
so $q(y,c)=M_D$ for all $c\ge c_1$.

Let us now show (iii). Using equations~\eqref{eq:Lagrn} and \eqref{eq:dual_function}, we have that 
\begin{equation}\label{eq:weak_dual}
   \begin{array}{rcl}
M_D=\ds \sup_{(y,c)\in H\times \R_+ }q(y,c) &=&  \ds\sup_{(y,c)\in H\times \R_+ }\inf_{x\in X} \inf_{z\in H} \{f(x,z)-\langle A(z),y\rangle + c\sigma(z)\}\\ [3mm]

&=& \ds\sup_{(y,c)\in H\times \R_+ }\inf_{z\in H} \{\inf_{x\in X} f(x,z)\} -\langle A(z),y\rangle + c\sigma(z)\\ [3mm]
&=& \ds\sup_{(y,c)\in H\times \R_+ }\inf_{z\in H} \{\beta(z)-\langle A(z),y\rangle + c\sigma(z)\} \\ [3mm]
&\le & \ds\sup_{(y,c)\in H\times \R_+ }\{\beta(0)-\langle A(0),y\rangle + c\sigma(0)\}=\beta(0) \\ [3mm]
&&\\
&=& \ds \inf_{x\in X} \varphi(x) = M_P,
   \end{array}
\end{equation}
where we used the definition of $\beta$ (see Definition \ref{def:PF}) in the fourth equality. We also used the fact that $A(0)=0$ (which holds by $(A_0))$ and the fact that $\sigma(0)=0$  (see Remark~\ref{rem:assumption_A}(i) and Definition~\ref{def:assumptions_sigma}(b)).
\end{proof}
\medskip

We note that the result above only requires for $\sigma$ to verify property (b) in Definition~\ref{def:assumptions_sigma}.

\medskip 
We now proceed to establish the zero duality gap property for our primal dual setting. Burachik and Rubinov show strong duality for very general primal -- dual frameworks in \cite{BR2007}. In their analysis, they use \emph{abstract convexity}\index{abstract convexity} tools~\cite{Rub2000}. Even though we can deduce the zero duality gap property as a consequence of their analysis, we prefer to prove this fact directly here. We do this because one of the assumptions used in \cite{BR2007} can be relaxed in our setting. Namely, we can replace the assumption of w-lsc of $\beta$ (used in \cite{BR2007}) by just {\em sequential}-w-lsc of $\beta$. Recall that the latter property holds for our function $\beta$, as established in Proposition \ref{Prop:lsc_perturb}.
\medskip

\begin{lemma}[properties of the Lagrangian approximation]\hfill\break
\label{lem:seq_Lagrn}
Consider the primal problem (P) and its dual problem
(D). Assume that {\rm\hyperlink{H0}{(H0)}--\hyperlink{H2}{(H2)}}, ${\bf\hyperlink{A0}{(A_0)}}$ and ${\bf\hyperlink{A1}{(A_1)}}$ hold. Assume that the augmenting function $\sigma$ verifies Definition \ref{def:assumptions_sigma}(a')(b). Suppose that there exists some
$(\bar y,\bar c)\in H\times \R_+$ such that $q(\bar y,\bar c)>-\infty$. For $n\in \dN$, define $\gamma_n:H\to \R_{-\infty}$ as
\[
\gamma_n(z):=\beta(z)-\langle A(z),\bar y\rangle+n\sigma(z).
\]
There exists a sequence $(v_n)\subset H$ with the following properties.
\begin{itemize}
    \item[(i)] There exists $n_0\in \dN$  such that 
    \begin{equation}\label{eq:s1}
 \gamma_n(v_n)=\inf_{z\in H} \gamma_n(z),\,\,\forall \, n\ge n_0.
    \end{equation}
    \item[(ii)] The sequence $(v_n)\subset H$ verifying \eqref{eq:s1} is bounded and converges weakly to zero.
   \end{itemize}
\end{lemma}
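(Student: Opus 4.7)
The plan is to first identify $\inf_{z\in H}\gamma_n(z)$ with the dual value $q(\bar y,n)$, then exploit the conditional coercivity of $\sigma$ together with reflexivity of $H$ to produce a minimizer of $\gamma_n$ for each sufficiently large $n$, and finally use the bound $\gamma_n(v_n)\le\gamma_n(0)=\beta(0)$ to force $\sigma(v_n)\to 0$, which pins every weak cluster point of $(v_n)$ at the origin. The hard part will be producing a bounded minimizing sequence from the merely \emph{conditional} coercivity of $\sigma$.

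First I would observe that interchanging the two infimums in the definition of $q$, exactly as in the chain~\eqref{eq:weak_dual} used to prove Proposition~\ref{prop:dualProperty}(iii), gives $\inf_{z\in H}\gamma_n(z)=q(\bar y,n)$. Fix an integer $n_0'\ge\bar c$. By the non-decreasing property in Proposition~\ref{prop:dualProperty}(ii), $q(\bar y,n)\ge q(\bar y,n_0')\ge q(\bar y,\bar c)>-\infty$ for every $n\ge n_0'$, so $\gamma_n$ is uniformly bounded below on $H$ by the finite constant $q(\bar y,n_0')$ and is therefore real-valued. Weak duality (Proposition~\ref{prop:dualProperty}(iii)) also yields $q(\bar y,n)\le M_P=\beta(0)<+\infty$.

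For part~(i), I would check sequential w-lsc of $\gamma_n$ by combining Proposition~\ref{Prop:lsc_perturb} (applied to $\beta$), assumption ${\bf\hyperlink{A1}{(A_1)}}$ together with Remark~\ref{rem:w_lsc} (applied to $-\langle A(\cdot),\bar y\rangle$), and Definition~\ref{def:assumptions_sigma}(a') (applied to $\sigma$). To obtain a minimizer, take a minimizing sequence $\{z_k\}$ for $\gamma_n$ and decompose $\gamma_n=\gamma_{n_0'}+(n-n_0')\sigma$. Since $\gamma_{n_0'}\ge q(\bar y,n_0')$ everywhere and $\gamma_n(z_k)\le\beta(0)+1$ for $k$ large, this forces $\sigma(z_k)\le (\beta(0)+1-q(\bar y,n_0'))/(n-n_0')$. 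Choosing $n_0\ge n_0'$ large enough that this right-hand side is $\le K_\sigma$ for all $n\ge n_0$ places $\{z_k\}$ in the bounded set $\{z\in H:\sigma(z)\le K_\sigma\}$ of Definition~\ref{def:assumptions_sigma}(a'). Theorem~\ref{th:3.18} then extracts $z_{k_j}\rightharpoonup v_n$, and sequential w-lsc of $\gamma_n$ delivers $\gamma_n(v_n)\le\liminf_j\gamma_n(z_{k_j})=\inf_z\gamma_n(z)$, establishing \eqref{eq:s1}.

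For part~(ii), using $A(0)=0$ (Remark~\ref{rem:assumption_A}) and $\sigma(0)=0$ I have $\gamma_n(0)=\beta(0)$, so $\gamma_n(v_n)\le\beta(0)$. Repeating the decomposition argument with $v_n$ in place of $z_k$ yields $\sigma(v_n)\le (\beta(0)-q(\bar y,n_0'))/(n-n_0')\to 0$ as $n\to\infty$, and in particular $\sigma(v_n)\le K_\sigma$ eventually, so $(v_n)$ is bounded. Given any subsequence of $(v_n)$, Theorem~\ref{th:3.18} supplies a further subsequence $v_{n_{k_j}}\rightharpoonup v^*$, and w-lsc of $\sigma$ gives $0\le\sigma(v^*)\le\liminf_j\sigma(v_{n_{k_j}})=0$, whence $\sigma(v^*)=0$ and Definition~\ref{def:assumptions_sigma}(b) forces $v^*=0$. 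A standard subsequence-of-subsequence argument then upgrades this to $v_n\rightharpoonup 0$, completing the proof.
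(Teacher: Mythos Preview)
Your proof is correct and follows essentially the same strategy as the paper: both arguments hinge on the decomposition $\gamma_n=\gamma_{m}+(n-m)\sigma$ for a fixed baseline index $m$ (your $n_0'$, the paper's $\bar n$), combined with the lower bound $\gamma_m\ge q(\bar y,m)>-\infty$ and the upper bound $\gamma_n(0)=\beta(0)$, to force $\sigma$-values of (approximate) minimizers below $K_\sigma$ and then invoke conditional coercivity plus sequential w-lsc. Your exposition is in fact cleaner than the paper's, which uses a double-indexed family $v_n^k$ of approximate minimizers and has some notational slippage between the indices $n$ and $k$; your single minimizing sequence for each fixed $n$ achieves the same end more transparently.
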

\begin{proof} Take $(\bar y,\bar c)\in H\times \R_+$ as given in the assumption of the theorem. Let $\bar n:= [\bar c]+1$ where $[\cdot]$ denotes the integer part (or floor)  of a real number. Then $\bar n\in \dN$ and 
since $q(\bar y,\cdot)$ is increasing we have that $q(\bar y,\bar n)\ge q(\bar y,\bar c)>-\infty$. For any $n\in \dN$, denote by $s(n):=\inf_{z\in H} \gamma_n(z)$. Since $q(\bar y,\bar n)>-\infty$ there exists $r_0\in \R$ such that 
\[
r_0< q(\bar y,\bar n)= \inf_{z\in H} \beta(z) -\langle A(z),\bar y\rangle+\bar n\sigma(z)=s(\bar n),
\]
which, together with the fact that $(s(n))$ is an increasing sequence, yields $s(n)>r_0$ for all $n\ge \bar n$. From now on, we consider the sequence $(s(n))$ for $n\ge \bar n$. Observe that this sequence $(s(n))$ is monotone increasing, bounded below by $r_0$ and bounded above by $\beta(0)=M_P$. Indeed, the monotonicity property follows from Proposition \ref{prop:dualProperty}(ii). The statement on the upper bound follows directly from the definition of $s(\cdot)$. Namely, for every $n\ge \bar n$ we have that $s(n)\le \gamma_n(0)=\beta(0)$, where we used assumption ${\bf(A_0)}$ and the fact that $\sigma(0)=0$. Altogether, the latter properties imply that the sequence $(s(n))$ converges (increasingly) to a limit $\bar s\le \beta(0)$. 

\noindent Proof of (i). Fix $n\ge \bar n$, we have that
\[
s(n)+1/k >s(n)= \inf_{z\in H} \gamma_n(z),
\]
for every $k> \bar n$. The definition of infimum allows us to find $v^k_n\in H$ such that 
\begin{equation}\label{eq:s1a}
0\le \gamma_n(v^k_n) - s(n)<1/k,
\end{equation}
where we used the definition of $s(\cdot)$ in the first inequality. For every $k> \bar n$, we use the definition of $s(\bar n)$ to write
\begin{equation}\label{eq:s1b}
s(\bar n)\le \gamma_{\bar n}(v^k_n)= \gamma_{k}(v^k_n)+(\bar n -k) \sigma(v^k_n),
\end{equation}
which re-arranges as
\begin{equation}\label{eq:s1e}
(k-\bar n)\sigma(v^k_n)\le \gamma_{k}(v^k_n)-s(\bar n).
\end{equation}
By \eqref{eq:s1a} and the established properties of the sequence $(s(n))$, we know that 
\begin{equation}\label{eq:s1d}
\gamma_{n}(v^k_n)<s(n)+1/k\le \bar s +1/k < \bar s +1,
\end{equation}
for all $k>\bar n$. Using \eqref{eq:s1d} in \eqref{eq:s1e}, and re-arranging the resulting expression we obtain
\begin{equation}\label{eq:s1c}
\sigma(v^k_n)\le  \dfrac{ (\bar s - s(\bar n)) +1 }{(k-\bar n)},
\end{equation}
where we also used the fact that $k> \bar n$. Take now $k_0:= \bar n + \dfrac{ (\bar s - s(\bar n)) +1 }{ K_{\sigma}}$, where $K_{\sigma}>0$ is as in Definition \ref{def:assumptions_sigma}(a'). Then it is direct to check that
\[
\sigma(v^k_n)\le  \dfrac{ (\bar s - s(\bar n)) +1 }{(k-\bar n)}<K_{\sigma},
\]
for all $k\ge k_0$. Now Definition \ref{def:assumptions_sigma}(a') implies that the set
\[
T:=\{v^k_n \::\: n\ge \bar n, k\ge k_0\},
\]
is bounded. In particular, for a fixed $n\ge \bar n$ the sequence $(v^k_n)_{k\ge k_0}$ is bounded and hence it has a subsequence that converges weakly to some $v_n\in H$. To keep notation simple, we still denote the weakly convergent subsequence by $(v^k_n)_{k\ge k_0}$. By Proposition \ref{Prop:lsc_perturb}, we know that $\beta$ is sequentially-w-lsc. By ${\bf\hypertarget{A1}{(A_1)}}$ and Definition \ref{def:assumptions_sigma}(a'), $-\langle A(\cdot),\bar y\rangle$ and $\sigma$ are w-lsc (and hence sequentially-w-lsc), we deduce that $\gamma_n$ is sequentially-w-lsc. Using the sequential w-lsc of $\gamma_n$ and the first inequality in
\eqref{eq:s1d} we can write
\[
\gamma_n(v_n)\le \liminf_{k\to \infty} \gamma_{k}(v^k_n)\le \liminf_{k\to \infty} s(n)+1/k=s(n)\le \gamma_n(z),
\]
for every $z\in H$ and every fixed $n\ge \bar n$. The last inequality in the expression above follows from the definition of $s(\cdot)$. Statement (i) now follows with $n_0:=\bar n$, by taking $z=v_n$ in the above expression.

\noindent Proof of (ii). Take now the sequence $(v_n)$ defined in part (i). Note that the set $T$ defined in part (i) is bounded, so there exists a closed ball $B_0$ such that $T\subset B_0$. By Theorem \ref{th:BA1}, $B_0$ is weakly compact (and by Fact \ref{lem:Weakly Closed} weakly closed). This implies that the weak closure of $T$ must be contained in $B_0$. Namely,
\[
{\overline{T}}^w\subset \overline{B_0}^w=B_0,
\]
showing that $\overline{T}^w$is bounded. By construction (see proof of (i)), every $v_n$ is a weak limit of a sequence in $T$, so we deduce that
\[
(v_n)\subset {\overline{T}}^w\subset B_0,
\]
showing that $(v_n)$ is bounded. Thus the boundedness statement in (ii) holds.  Let us proceed to show now that the sequence $(v_n)$ converges weakly to zero.  To prove this fact, we will show that every weakly convergent subsequence must converge to zero. If the latter is true, zero is the only weak accumulation point, so the whole sequence must weakly converge to zero. We have just established that $(v_n)$ is bounded, so by Theorem \ref{th:3.18}, it has weakly convergent subsequences. Take any such subsequence, denoted by $(v_{n_j})_{j\in \dN}$, converging weakly to some $v$. Since $(v_{n_j})\subset (v_n)$ and $n\ge \bar n$, we can take ${n_j}>\bar n$. Following the same steps as in \eqref{eq:s1b}-\eqref{eq:s1c} with $k={n_j}>\bar n$ and $v_{n_j}$ in place of $v_n^k$ we have that
\[
\sigma(v_{n_j})\le  \dfrac{ (\bar s - s(\bar n)) +1 }{(n_j-\bar n)},
\]
with $n_j\to \infty$. By the w-lsc of $\sigma$ we can write
\[
0\le \sigma(v)\le \liminf_{j\to \infty} \sigma(v_{n_j})\le \liminf_{j\to \infty}\frac{ (\bar s - s(\bar n)) +1 }{(n_j-\bar n)}=0,
\]
so $\sigma(v)=0$ and the assumptions on $\sigma$ yield $v=0$. This shows that every weak accumulation point of $(v_n)$ must be equal to zero, and hence the whole sequence converges weakly to zero, completing the proof of  (ii).


\end{proof}

We are now ready to establish the strong duality property of our primal-dual framework.

\begin{theorem}[strong duality for $(P)$--$(D)$ framework] \hfill\break 
\label{th:StrongDual}
Consider the primal problem (P) and its dual problem
(D). Assume that {\rm\hyperlink{H0}{(H0)}--\hyperlink{H2}{(H2)}}, ${\bf\hyperlink{A0}{(A_0)}}$ and ${\bf\hyperlink{A1}{(A_1)}}$ hold. Assume that the augmenting function $\sigma$ verifies Definition \ref{def:assumptions_sigma}(a')(b). Suppose that there exists some
$(\bar y,\bar c)\in H\times \R_+$ such that $q(\bar y,\bar c)>-\infty$. Then the zero-duality-gap property holds, i.e. $M_P=M_D$.
\end{theorem}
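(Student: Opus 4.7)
My plan is to combine the weak duality already established in Proposition~\ref{prop:dualProperty}(iii) with a matching lower bound on $M_D$ coming from the sequence $(v_n)$ constructed in Lemma~\ref{lem:seq_Lagrn}. Since weak duality gives $M_D\le M_P$, it suffices to prove $M_P\le M_D$, i.e., $\beta(0)\le M_D$.

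I would proceed as follows. First, unpack the dual value: using the definitions of $\ell$, $q$ and $\beta$, for every $n\in\dN$,
\[
q(\bar y,n)=\inf_{x\in X}\inf_{z\in H}\{f(x,z)-\langle A(z),\bar y\rangle + n\sigma(z)\}=\inf_{z\in H}\gamma_n(z).
\]
Lemma~\ref{lem:seq_Lagrn}(i) tells us that for all $n\ge n_0$ the infimum on the right is attained at $v_n$, so
\[
q(\bar y,n)=\gamma_n(v_n)=\beta(v_n)-\langle A(v_n),\bar y\rangle+n\sigma(v_n)\le M_D.
\]
Since $n\sigma(v_n)\ge 0$ (as $\sigma\ge 0$ and $n\ge 0$), this immediately yields the cleaner bound
\[
\beta(v_n)-\langle A(v_n),\bar y\rangle\le M_D,\qquad \forall\,n\ge n_0.
\]

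Next, I would pass to the limit along $v_n\rightharpoonup 0$ (guaranteed by Lemma~\ref{lem:seq_Lagrn}(ii)). By Proposition~\ref{Prop:lsc_perturb}, $\beta$ is sequentially weakly lower semicontinuous, so $\beta(0)\le\liminf_n\beta(v_n)$. By assumption ${\bf\hyperlink{A1}{(A_1)}}$, $-\langle A(\cdot),\bar y\rangle$ is w-lsc, hence in particular sequentially w-lsc, which gives $-\langle A(0),\bar y\rangle\le\liminf_n\bigl(-\langle A(v_n),\bar y\rangle\bigr)$. Since $A(0)=0$ (by ${\bf\hyperlink{A0}{(A_0)}}$ and Definition~\ref{def:assumptions_sigma}(b)), the second term contributes zero. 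Adding the two inequalities and using the super-additivity of $\liminf$,
\[
\beta(0)=\beta(0)-\langle A(0),\bar y\rangle\le\liminf_{n\to\infty}\bigl[\beta(v_n)-\langle A(v_n),\bar y\rangle\bigr]\le M_D.
\]

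Combining this with the weak duality inequality $M_D\le M_P=\beta(0)$ closes the loop and gives $M_P=M_D$. The only subtle point to watch is the sequential (rather than topological) nature of the lower semicontinuity of $\beta$: this is precisely why Lemma~\ref{lem:seq_Lagrn} is formulated in terms of a weakly convergent sequence, and why the dualizing parameterization was required to be weakly level-compact rather than just w-lsc. No further obstacles are expected; everything reduces to passing to the $\liminf$ in an inequality built from the approximate minimizers of $\gamma_n$.
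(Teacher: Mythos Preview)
Your proposal is correct and follows essentially the same route as the paper's proof: both use Lemma~\ref{lem:seq_Lagrn} to produce the sequence $(v_n)$ with $q(\bar y,n)=\gamma_n(v_n)\le M_D$, drop the nonnegative term $n\sigma(v_n)$, and then pass to the $\liminf$ via the sequential w-lsc of $\beta$ and of $-\langle A(\cdot),\bar y\rangle$ together with $A(0)=0$. The only cosmetic difference is that you discard $n\sigma(v_n)$ before taking the $\liminf$, whereas the paper does so inside the $\liminf$ chain.
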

\begin{proof} Recall that weak duality (i.e., that $M_D\le M_P$) holds in our setting, as established earlier in Proposition~\ref{prop:dualProperty}(iii). Hence, we only need to prove that  $M_D\ge M_P$. By Lemma \ref{lem:seq_Lagrn}(i), we can take a sequence $(v_n)$ verifying \eqref{eq:s1}. Our first step is to show the following inequality. 
\begin{equation}\label{eq:MD1}
    M_D\ge \liminf_{n} \gamma_n(v_n),
\end{equation}
where $\gamma_n$ and $v_n$ are as in Lemma \ref{lem:seq_Lagrn}. Using $n_0$ as in Lemma \ref{lem:seq_Lagrn}(i) and  the definition of $M_D$, we have that
 \begin{equation}\label{eq:strong_dual1}
     \begin{array}{rcl}
          M_D &=& \ds\sup_{(y,c)\in H\times \R_+ }\inf_{z\in H} \{\beta(z)-\langle A(z),y\rangle + c\sigma(z)\} \\[3mm]
          &\ge& \ds\inf_{z\in H} \{\beta(z)-\langle A(z),\bar y\rangle + n\sigma(z)\} \\[3mm]
          &=& \ds \{\beta(v_n)-\langle A(v_n),\bar y\rangle + n\sigma(v_n)\}= \gamma_n(v_n),
     \end{array}
 \end{equation}
where we used the fixed choice of $(y,c):=(\bar y,n)$ with $n>n_0$ in the inequality, fact \eqref{eq:s1} in the second equality, and the definition of $\gamma_n$ in the last one. Inequality \eqref{eq:MD1} now follows by taking $\liminf$ in \eqref{eq:strong_dual1}. Using \eqref{eq:MD1}, the definition of $\gamma_n$ and the properties of $\liminf$ we deduce that
\begin{equation}
\label{eq:strong_dual2}
     \begin{array}{rcl}
   M_D   &\ge & \liminf_{n} \gamma_n(v_n)\\[3mm]
   &= &\liminf_{n}\beta(v_n)+ \,\left[-\langle A(v_n),\bar y\rangle + n\sigma(v_n)\right]\\[3mm]
     &\ge & \liminf_{n}\beta(v_n) + \liminf_{n} \left[ -\langle A(v_n),\bar y\rangle + n\sigma(v_n) \right]\\[3mm]
     &\ge &\liminf_{n}\beta(v_n) + \liminf_{n} \left[-\langle A(v_n),\bar y\rangle \right] \\[3mm]
     &\ge & \beta(0)+0=\beta(0)=M_P, 
  \end{array}
 \end{equation}
where we used the fact that $ n\sigma(v_n)\ge 0$ in the third inequality. In the last inequality we used Lemma \ref{lem:seq_Lagrn}(ii), namely the fact that $(v_n)$ converges weakly to zero and the fact that $A(0)=0$. More precisely, using the (sequential) w-lsc of the functions $\beta$ and $-\langle A(\cdot),\bar y\rangle$, we obtain
\[
\liminf_{n} \left[-\langle A(v_n),\bar y\rangle \right]\ge -\langle A(0),\bar y\rangle=0,
\]
and
\[
\liminf_{n}\beta(v_n)\ge \beta(0).
\]
Both facts were used in the last inequality of \eqref{eq:strong_dual2}. Since we already have that $M_D\le M_P$, we have thus established that $M_P=M_D$.
\end{proof}

\begin{definition}[superdifferential of a concave function]  \hfill\break
\label{rSupDiff}
Let $H$ be a Hilbert space and $g:H\to \R_{-\infty}$ be a concave function. Take $r\geq 0$. The $r$-superdifferential
of $g$ at $w_0\in {\rm dom}(g):=\{w\in H\::\: g(w)>-\infty\}$ is the set $\partial_r g(w_0)$ defined by
\[\partial_r g(w_0):=\{v\in H : g(w)\leq g(w_0) + \langle v,w-w_0 \rangle + r, \;\;\forall v \in H\}. \]
\end{definition}

\begin{definition}[approximations for the primal--dual and Lagrangian] \hfill\break
\label{def:aprrox_Lagn}
We say that
\begin{itemize}
    \item[(i)] $x_*\in X$ is an $\epsilon$-optimal primal solution\index{$\epsilon$-optimal primal solution} of $(P)$ if $\varphi(x_*)\leq M_P+\epsilon$
    \item[(ii)] $(y_*,c_*)\in H\times\R_+$ is an $\epsilon$-optimal dual solution\index{$\epsilon$-optimal primal solution} if $q(y_*,c_*)\geq M_D-\epsilon$.
    \item[(iii)] For $r\ge 0$ define the set
\begin{equation}\label{eq:aproxvalue}
X_r(y,c):=\{(x,z) \in X\times H:
f(x,z)-\langle A(z),y\rangle + c\sigma(z)
\leq q(y,c)+r\},
\end{equation}
which contains all  $r$-minimizers of the augmented Lagrangian.
\item[(iv)] Fix $(w,c)\in H\times\R_+$ and define $\Phi_{(w,c)}:X\times H\to\
\bar{\R}$ as
\begin{equation}\label{eq:ee1}
\Phi_{(w,c)}(x,z):=
f(x,z)-\langle A(z),w\rangle + c\sigma(z).
\end{equation}
\end{itemize}
\end{definition}

\begin{remark}[the dual set of the approximation for the Lagrangian]\hfill \break
\rm\label{step1a}
By definition of $q$ as an infimum, for every $r>0$ and every $(y,c)$ such that $q(y,c)>-\infty$, there exists $(x,z)$ such that $f(x,z)-\langle A(z),y\rangle + c\sigma(z)< q(y,c)+r$. Therefore, for every $r>0$ and every $(y,c)$ such that $q(y,c)>-\infty$, we have that $X_r(y,c)$ is nonempty.
\end{remark}

The result below extends \cite[Proposition 3.1, parts (i) and (iii)]{BIMinexact2013}, where the particular case in which $A=I$, the identity map in $H$, is considered. Since the proof follows, mutatis mutandis, the same steps as those in \cite[Proposition 3.1, parts (i) and (iii)]{BIMinexact2013}, we omit it.

\begin{proposition}\label{gradz0}
If $(\hat{x},\hat{z}) \in X_r(\hat{y},\hat{c})$, then the following facts hold.\\
\noindent i) For all $r\ge0$, $(-A(\hat{z}),\sigma(\hat{z})) \in \partial_rq(\hat{y},\hat{c})$.\\
\noindent ii) If $M_D\le M_P$ and $\hat z=0$, then $\hat x$ is a $r$-optimal primal solution, and $(\hat y,\hat c)$ is a $r$-optimal dual solution. In particular, if $r\le\epsilon$, then $\hat x$ is a $\epsilon$-optimal primal solution, and $(\hat y,\hat c)$ is a $\epsilon$-optimal dual solution.
\end{proposition}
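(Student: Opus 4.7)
The plan is to unwind the definitions directly, which is routine once one keeps track of which inequality comes from which object. For part (i), I want to verify the inequality characterizing $r$-superdifferentials, namely
\[
q(y,c) \le q(\hat y,\hat c) + \langle -A(\hat z), y-\hat y\rangle + \sigma(\hat z)(c-\hat c) + r \qquad \forall\,(y,c)\in H\times\R_+.
\]
The starting point will be the trivial bound $q(y,c) \le f(\hat x,\hat z) - \langle A(\hat z), y\rangle + c\,\sigma(\hat z)$, which comes from the definition of $q$ in \eqref{eq:dual_function} together with \eqref{eq:Lagrn} by taking $x=\hat x$ and dropping the infimum over $z$ by evaluating at $\hat z$. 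Then I rewrite the right-hand side by adding and subtracting $\langle A(\hat z),\hat y\rangle$ and $\hat c\,\sigma(\hat z)$ to produce the Lagrangian value at $(\hat y,\hat c)$ plus a linear correction in $(y-\hat y, c-\hat c)$. Using the defining inequality of $X_r(\hat y,\hat c)$ on that Lagrangian value replaces it by $q(\hat y,\hat c)+r$, which is precisely the inequality needed.

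For part (ii) I would exploit the hypothesis $\hat z=0$ together with the three facts $f(\hat x,0)=\varphi(\hat x)$ (dualizing parameterization, Definition \ref{def:PF}), $A(0)=0$ (from ${\bf(A_0)}$ and Definition \ref{def:assumptions_sigma}(b), see Remark \ref{rem:assumption_A}(i)), and $\sigma(0)=0$. Substituting these into the defining inequality of $X_r(\hat y,\hat c)$ collapses the Lagrangian value at $(\hat x,0)$ to $\varphi(\hat x)$, yielding $\varphi(\hat x) \le q(\hat y,\hat c)+r$. I would then chain this with the trivial bounds $M_P \le \varphi(\hat x)$ and $q(\hat y,\hat c)\le M_D$ together with the hypothesis $M_D\le M_P$:
\[
M_P \le \varphi(\hat x) \le q(\hat y,\hat c)+r \le M_D+r \le M_P+r.
\]
Reading this chain left-to-right gives $\varphi(\hat x)\le M_P+r$, which is $r$-optimality of $\hat x$; reading from $q(\hat y,\hat c) \ge \varphi(\hat x)-r \ge M_P-r \ge M_D-r$ gives $r$-optimality of $(\hat y,\hat c)$. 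The final ``in particular'' clause for $r\le\epsilon$ is immediate by monotonicity of the $r$-optimality notion in Definition \ref{def:aprrox_Lagn}(i)--(ii).

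There is no real obstacle here: both parts are bookkeeping exercises with the definitions, and the only non-trivial ingredient is having $A(0)=0$ and $\sigma(0)=0$ available to make the linear and augmenting terms vanish in part (ii). The reason the authors omit the proof is precisely that the argument is identical to that in \cite{BIMinexact2013} after replacing $\hat z$ by $A(\hat z)$ in the linear term; the map $A$ enters only through the pairing $\langle A(\hat z),\cdot\rangle$ and does not interact with the superdifferential inequality beyond producing the first coordinate $-A(\hat z)$ of the superdifferential element.
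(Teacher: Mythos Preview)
Your proposal is correct and is precisely the standard argument that the paper omits (citing \cite[Proposition~3.1, parts (i) and (iii)]{BIMinexact2013}): unwind the definitions of $q$ and $X_r$, add and subtract to isolate the linear correction in $(y-\hat y,c-\hat c)$, and for part (ii) exploit $f(\hat x,0)=\varphi(\hat x)$, $A(0)=0$, $\sigma(0)=0$ together with the sandwich $M_P\le\varphi(\hat x)\le q(\hat y,\hat c)+r\le M_D+r\le M_P+r$. There is nothing to add.
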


\medskip
{\bf From now on we assume that the hypotheses of Theorem~\ref{th:StrongDual} are verified, and hence we have $M_P=M_D$.}
\medskip

The following result establishes several properties of the primal--dual solution sets, as well as compactness properties of the level sets of the function $\Phi_{(y,c)}$ defined in \eqref{eq:ee1}. The techniques of the proof for parts (i), (ii), the non-emptiness of the set in \eqref{eq:LS}, and (iiiA) are standard, and can be found in \cite[Lemma 3.1]{BIMinexact2013}.  Hence we will omit their proofs. The proof of part (iiiB), however, is new because the coercivity assumption on $\sigma$, which is used in \cite{BIMinexact2013}, is relaxed to the weaker version of Definition \ref{def:assumptions_sigma} with condition (a'). Hence we present here the proof of this part.

\medskip

\begin{theorem}[the compact level set of the Lagrangian]\hfill \break
\label{th:compact_level_set_Lagrangian}
Consider the primal problem (P) and its dual problem
(D). Suppose that {\rm\hyperlink{H0}{(H0)}--\hyperlink{H2}{(H2)}}, ${\bf\hyperlink{A0}{(A_0)}}$ and ${\bf\hyperlink{A1}{(A_1)}}$ hold.  The following statements hold.
\begin{itemize}
    \item[(i)] The set $S(P)\neq \emptyset$ and $M_P\in \R$.
    \item[(ii)] Let $({\hat y},\hat c)\in H\times \R_+$ be such that $q({\hat y},\hat c)>-\infty$ and consider the set 
    \[
    T:=\{ (w,c)\in H\times \R_+\::\: c>\hat c+\|w-{\hat y}\|\}.
    \]
    Then,
  \begin{itemize}
  \item[(iiA)] $T\subset \dom q$, i.e., $q(w,c)>-\infty$ for every $(w,c)\in T$. 
  \item[(iiB)] If $({\hat y},\hat c)\in S(D)$ then $T\subset S(D)$.
  \end{itemize}   
\item[(iii)] For every $s\ge M_P$, and every $(w,c)\in H\times \R_+$, the level set
\begin{equation}\label{eq:LS}
lev_{\Phi_{(w,c)}}(s)=\{(x,z)\in X\times H:\Phi_{(w,c)}(x,z)=
                f(x,z)-\langle A(z),w\rangle +c\sigma(z)\leq s\},
\end{equation}
is not empty. 
\begin{itemize}
  \item[(iiiA)] Let $({\hat y},\hat c)$ be as in (ii) and assume that $\sigma$ verifies Definition \ref{def:assumptions_sigma} with condition (a).  Then  the level set in \eqref{eq:LS}  is weakly-compact for every $(w,c)\in T$. In this situation, there exists
$(\tilde{x},\tilde{z})$ such that
\begin{equation}\label{Lem:eq2}
    q(w,c)=f(\tilde{x},\tilde{z}) -\langle A(\tilde{z}),w \rangle + c\sigma(\tilde{z}).
\end{equation}
\item[(iiiB)] Let $({\hat y},{\hat c})$ be as in (ii) and assume that $\sigma$ verifies Definition \ref{def:assumptions_sigma} with condition (a').  Define the set
 \[
    \tilde T(s):=\{ (w,c)\in H\times \R_+\::\: c> {\hat c}+ \left( \frac{s-q({\hat y},{\hat c})}{K_{\sigma}} \right)+\|w-{\hat y}\|\}\subset T,
    \]
where $K_{\sigma}>0$ is as in Definition \ref{def:assumptions_sigma}(a'). Then  the level set in \eqref{eq:LS} is weakly-compact for every $(w,c)\in \tilde T(s)$. In this situation, there exists
$(\tilde{x},\tilde{z})$ such that \eqref{Lem:eq2} holds.
\end{itemize}   
  \end{itemize}   
%

\end{theorem}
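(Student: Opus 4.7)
The strategy is to show that $L := lev_{\Phi_{(w,c)}}(s)$ is bounded and sequentially weakly closed in the product space $X \times H$, then invoke Eberlein--Smulian (Theorem~\ref{th:ES}) to conclude weak compactness; the existence of a minimizer attaining \eqref{Lem:eq2} then follows from Theorem~\ref{th:existence} applied to $\Phi_{(w,c)}$ on $L$. The non-emptiness of $L$ (for $s \ge M_P$) is granted by the part of the statement whose proof is omitted.

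\textbf{Boundedness of the $z$-component.} Take $(x,z) \in L$. From the definition of $q(\hat y,\hat c)$ as an infimum,
\[
q(\hat y,\hat c) \le f(x,z) - \langle A(z),\hat y\rangle + \hat c\,\sigma(z).
\]
Subtracting the inequality $f(x,z)-\langle A(z),w\rangle+c\sigma(z) \le s$ and rearranging gives
\[
(c-\hat c)\,\sigma(z) \le s - q(\hat y,\hat c) + \langle A(z),\,w-\hat y\rangle,
\]
and then ${\bf\hyperlink{A0}{(A_0)}}$ together with Cauchy--Schwarz yields
\[
\bigl(c-\hat c - \|w-\hat y\|\bigr)\sigma(z) \le s - q(\hat y,\hat c).
\]
Since $(w,c) \in \tilde T(s)$ we have $c-\hat c-\|w-\hat y\| > (s-q(\hat y,\hat c))/K_\sigma \ge 0$, so $\sigma(z) \le K_\sigma$. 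Conditional coercivity (Definition~\ref{def:assumptions_sigma}(a')) then places $z$ in a fixed bounded set, hence inside some closed ball $B_H \subset H$, which is weakly compact by Theorem~\ref{th:BA1}.

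\textbf{Boundedness of the $x$-component.} Using $\sigma(z) \le K_\sigma$ and $c\sigma(z)\ge 0$ together with ${\bf\hyperlink{A0}{(A_0)}}$, the defining inequality of $L$ forces $f(x,z) \le \alpha$ with $\alpha := s + K_\sigma\|w\|$, independent of $(x,z) \in L$. Now invoke weak level-compactness of $f$ (assumption \hyperlink{H2}{(H2)}): for each $\bar z \in B_H$ there is a weakly open $U_{\bar z}\ni\bar z$ and a weakly compact $B_{\bar z}\subset X$ with $\bigcup_{z\in U_{\bar z}} lev_{z,f}(\alpha) \subset B_{\bar z}$. The family $\{U_{\bar z}\}_{\bar z\in B_H}$ is a weak open cover of the weakly compact set $B_H$; extract a finite subcover $U_{\bar z_1},\dots,U_{\bar z_N}$ and set $B := \bigcup_{i=1}^N B_{\bar z_i}$, which is weakly compact (hence bounded by Corollary~\ref{cor:CCB}). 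Then $x \in B$ for every $(x,z)\in L$, so $L$ is bounded.

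\textbf{Sequential weak closedness and conclusion.} The function $\Phi_{(w,c)}$ is sequentially weakly lower semicontinuous: $f$ is w-lsc by \hyperlink{H2}{(H2)}, $-\langle A(\cdot),w\rangle$ is w-lsc by ${\bf\hyperlink{A1}{(A_1)}}$, and $\sigma$ is w-lsc by Definition~\ref{def:assumptions_sigma}(a'). Hence if $(x_n,z_n)\in L$ satisfy $x_n \rightharpoonup x$ in $X$ and $z_n\rightharpoonup z$ in $H$, then $\Phi_{(w,c)}(x,z) \le \liminf_n \Phi_{(w,c)}(x_n,z_n) \le s$, so $(x,z)\in L$. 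Combined with boundedness and Theorem~\ref{th:3.18}, this makes $L$ weakly sequentially compact, hence weakly compact by Theorem~\ref{th:ES}. Finally, since $\Phi_{(w,c)}$ is weakly lsc on the weakly compact set $L$ and $L \cap \dom \Phi_{(w,c)}\ne\emptyset$, Theorem~\ref{th:existence} furnishes $(\tilde x,\tilde z)\in L$ minimizing $\Phi_{(w,c)}$ on $L$; any $(x,z)\notin L$ gives $\Phi_{(w,c)}(x,z) > s \ge \Phi_{(w,c)}(\tilde x,\tilde z)$, so $(\tilde x,\tilde z)$ is a global minimizer and \eqref{Lem:eq2} holds.

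\textbf{Main obstacle.} The delicate step is the $z$-bound: the usual argument in \cite{BIMinexact2013} relies on full coercivity of $\sigma$, whereas here we only have conditional coercivity, which is precisely why the offset $(s-q(\hat y,\hat c))/K_\sigma$ appears in the definition of $\tilde T(s)$. Getting the correct multiplicative constant via ${\bf\hyperlink{A0}{(A_0)}}$ is what allows us to bring $\sigma(z)$ below the threshold $K_\sigma$ and trigger the boundedness guaranteed by (a'). Once $z$ is controlled, the $x$-bound via a finite subcover of $B_H$ and the liminf argument for weak closedness are standard.
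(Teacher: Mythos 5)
Your proof is correct and reaches the same conclusion by the same overall architecture as the paper: bound $\sigma(z)$ below the threshold $K_\sigma$ using $(A_0)$, Cauchy--Schwarz and the strict inequality defining $\tilde T(s)$; use weak level-compactness of $f$ to control the $x$-component; conclude weak sequential compactness, apply Eberlein--Smulian, and finish with Theorem~\ref{th:existence}. The one genuine difference is in how the $x$-component is controlled. The paper argues sequentially: it extracts a weakly convergent subsequence $z_{k_j}\rightharpoonup\bar z$ first, and only then invokes (H2) at the single point $\bar z$ to trap the tail of $\{x_k\}$ in one weakly compact set $B$. You instead establish boundedness of the entire level set up front, by covering the weakly compact ball $B_H\supset lev_\sigma(K_\sigma)$ with the weakly open neighbourhoods supplied by (H2) and extracting a finite subcover, so that a single weakly compact $B=\bigcup_{i=1}^N B_{\bar z_i}$ contains every $x$-component at once. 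Your route buys a cleaner statement ($L$ is bounded, full stop) and decouples the boundedness argument from the subsequent lsc/closedness argument, at the price of an extra compactness (finite subcover) step; the paper's route avoids the covering argument but intertwines the extraction of the $z$- and $x$-subsequences. Two small points worth making explicit if you write this up: when you invoke Theorem~\ref{th:3.18} for a bounded sequence in $X\times H$ you should either note that the product of a reflexive space and a Hilbert space is reflexive or extract subsequences componentwise as the paper does; and note that $f$, $-\langle A(\cdot),w\rangle$ and $\sigma$ are in fact topologically w-lsc (not merely sequentially), so $\Phi_{(w,c)}$ is w-lsc in the sense required by Theorem~\ref{th:existence} for the weak topology on $X\times H$ — you use this at the end, so it deserves a word.
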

\begin{proof} The proof of parts (i), (ii), the non-emptiness of the set in \eqref{eq:LS}, and (iiiA) are similar to \cite[Lemma 3.1]{BIMinexact2013}.
We proceed to establish (iiiB). Note first that $\tilde T(s)\subset T$ because $\left( \frac{s-q({\hat y},{\hat c})}{K_{\sigma}} \right)\ge 0$. Indeed, note that $s\ge M_P=M_D\ge q({\hat y},{\hat c})$ and $K_{\sigma}>0$. It remains to show that $lev_{\Phi_{(w,c)}}(s)$ is weakly compact under the assumptions given in (iiiB). Namely, we need to show that the level set in \eqref{eq:LS} is weakly compact for every $(w,c)\in \tilde T(s)$. By Theorem \ref{th:ES}, it is enough to show that the set $lev_{\Phi_{(w,c)}}(s)$ is weekly sequentially compact. The latter means that every sequence contained in $lev_{\Phi_{(w,c)}}(s)$ has a weakly convergent subsequence, and that the limit of the weakly convergent subsequence belongs to $lev_{\Phi_{(w,c)}}(s)$. Take a sequence $\{(x_k,z_k)\}\subset lev_{\Phi_{(w,c)}}(s)$. We start by showing that $\{z_k\}$ has a weakly convergent subsequence. Indeed, by definition of $lev_{\Phi_{(w,c)}}(s)$ we have
\[
\begin{array}{rcl}
 s &\geq& f(x_k,z_k)-\langle A(z_k),w \rangle + c\sigma(z_k) \\[2.mm]
&=&
f(x_k,z_k)-\langle A(z_k),{\hat y} \rangle + {\hat c} \sigma(z_k)+\langle A(z_k),{\hat y}-w \rangle+(c-{\hat c})\sigma(z_k)\\[2.mm]
&\ge& f(x_k,z_k)-\langle A(z_k),{\hat y} \rangle + {\hat c} \sigma(z_k)-\|A(z_k)\|\|{\hat y}-w\|+(c-{\hat c})\sigma(z_k)\\[2.mm]
 &\geq& q({\hat y},{\hat c}) + (c-{\hat c}-\|w-{\hat y}\|)\sigma(z_k),
\end{array}
\]
where we used Cauchy-Schwarz in the second inequality and the definition of $q$ and $(A_0)$ in the third one. Since $\tilde T(s)\subset T$ and $(w,c)\in \tilde T(s)$ we have that $(w,c)\in  T$ and hence $(c-{\hat c}-\|w-{\hat y}\|)>0$. The fact that $q({\hat y},{\hat c})>-\infty$, together with the properness of $\varphi$ imply that $q({\hat y},{\hat c})\in \R$. Altogether, we can re-arrange the last expression to obtain
$$
\sigma(z_k)\leq \frac{s-q({\hat y},{\hat c})}{c-{\hat c}-\|w-{\hat y}\|}=:M(c).
$$
Since $s\ge M_P=M_D\ge q({\hat y},{\hat c})$ we have that $M(c)\ge 0$. We will use now the fact that $(w,c)\in \tilde T(s)$. Indeed, this assumption implies that 
\[
c>\frac{s-q({\hat y},{\hat c})}{K_{\sigma}} + {\hat c}+\| w-{\hat y} \|.
\]
Under this assumption on $(w,c)$ it direct to check that $M(c)<K_{\sigma}$. By Definition \ref{def:assumptions_sigma}(a'), the sequence $\{z_k\}$ is bounded and hence it has a weakly convergent subsequence. Without loss of generality, we can assume that the whole sequence $\{z_k\}$ converges weakly to some $\bar{z}$. Now we proceed to find a subsequence of $\{x_k\}$ which is weakly convergent. Indeed, using the fact that $\{(x_k,z_k)\}\subset lev_{\Phi_{(w,c)}}(s)$ we can write
\begin{equation}\label{levelboundequation}
 f(x_k,z_k)\le s+\langle A(z_k),w \rangle - c\sigma(z_k)
\le s+ \|w\| \|A(z_k)\|
\le s+ \|w\| \sigma(z_k)\le  s+ \|w\| M(c)=: \tilde{\alpha}
\end{equation}
for some $\tilde{\alpha}\in \R$ (note that $(w,c)\in \tilde T(s)$ is fixed). By weak level compactness of $f$ (see Definition
\ref{level-f}), there exists a weakly compact set $B\subset X$ and a weakly open neighbourhood $U$ of $\bar{z}$ such that  
\[
\ds \bigcup_{z\in U} lev_{z,f}(\tilde{\alpha})\subset B,
\]
where $lev_{z,f}(\tilde{\alpha}):=\{x\in X \::\: f(x,z)\le \tilde{\alpha}\}$. Since $\{z_k\}$ converges weakly to $\bar{z}$ and $U$ is weakly open, there exists a $k_0$ such that $z_k\in U$ for all $k>k_0$. Using \eqref{levelboundequation} we deduce that
\[
\{x_k\}_{k>k_0}\subset \bigcup_{k>k_0}\{x\in X\::\:  f(x,z_k)\le \tilde{\alpha}\}\subset B.
\]
Consequently, $\{x_k\}_{k>k_0}\subset B$ and since $B$ is weakly compact, there exists a subsequence of $\{x_k\}_{k>k_0}$ which converges weakly to some $\bar x$. Altogether, we have established that $\{(x_k,z_k)\}$ has a weakly convergent subsequence $\{(x_{k_j},z_{k_j)}\}$, with limit $(\bar x, \bar z)$. Recall that $f$ and $\sigma$ are w-lsc, and the function $\la A(\cdot), w\ra $ is w-usc. Therefore, $-\la A(\cdot), w\ra $ is w-lsc. Altogether, the function $\Phi_{(w,c)}$ given by \eqref{eq:ee1} is w-lsc. For the weakly convergent subsequence $\{(x_{k_j},z_{k_j})\}$ we can write
\[
\Phi_{(w,c)}(\bar x,\bar z)\le \liminf_{j\to \infty} \Phi_{(w,c)}(x_{k_j},z_{k_j})\le s,
\]
where the last inequality follows from the assumption that $\{(x_k,z_k)\}\subset lev_{\Phi_{(w,c)}}(s)$. Hence, we have proved that the weak limit $(\bar x,\bar z)$ belongs to  $lev_{\Phi_{(w,c)}}(s)$, and so the latter set is weakly compact, as claimed. We proceed now to prove the last statement in (iiiB), which requires the existence of $(\tilde x, \tilde z)$ as in \eqref{Lem:eq2} . We note first that, by (ii) and the properness of $\varphi$, $q(w,c)\in \R$ for every $(w,c)\in T$, and hence the same holds for every $(w,c)\in \tilde T(s)$. To establish the equality in \eqref{Lem:eq2}, we need to show that the infimum corresponding to the value $q(w,c)$ is actually attained. We claim that the equality in \eqref{Lem:eq2} follows from the fact that
\begin{equation}\label{claim1}
\argmin_{(x,z)\in X\times H} \Phi_{(w,c)}(x,z)\neq \emptyset.
\end{equation}
Indeed, assume that \eqref{claim1} holds and take  $(\tilde{x},\tilde{z})\in \argmin_{(x,z)\in X\times H} \Phi_{(w,c)}(x,z)$. Thus, 
\begin{equation}\label{mini}
q(w,c)=\inf_{(x,z)\in X\times H} \Phi_{(w,c)}(x,z)=\Phi_{(w,c)}(\tilde{x},\tilde{z})=
f(\tilde{x},\tilde{z}) -\langle A(\tilde{z}),w \rangle + c\sigma(\tilde{z}),
\end{equation}
where we used the definitions of $q$  and  $\Phi_{(w,c)}$, and the assumption on $(\tilde{x},\tilde{z})$.  Therefore, the equality in \eqref{Lem:eq2} will hold if we prove \eqref{claim1}. We know that the set in \eqref{eq:LS} is nonempty, and we proved already that it is weakly compact.  With the notation of Theorem \ref{th:existence}, set $E:=X\times H$, and consider in $E$ the weak topology,  set $\varphi:=\Phi_{(w,c)}$ and $K:=lev_{\Phi_{(w,c)}}(s)$. It holds by definition of level set that $lev_{\Phi_{(w,c)}}(s)\subset \dom \Phi_{(w,c)}$. Altogether, we have that 
\[
\dom \Phi_{(w,c)}\cap lev_{\Phi_{(w,c)}}(s) = lev_{\Phi_{(w,c)}}(s)\neq \emptyset,
\]
where the non-emptiness follows from the first statement in part (iii). Since $\Phi_{(w,c)}$ is w-lsc, all the assumptions of Theorem \ref{th:existence} hold and therefore $\Phi_{(w,c)}$ is bounded below over the set $lev_{\Phi_{(w,c)}}(s)$ and attains its minimum over this set. This establishes \eqref{claim1}, and the proof of the theorem is complete.
\end{proof}


\section{Deflected Subgradient Algorithm (DSG)}\label{sec:DSG}

The following notation will be used throughout the paper.
\[
\begin{array}{l}
  q_{k} :=q(y_{k},c_{k}),\\
  \bar{q}:=q(\bar{y},\bar{c}),
\end{array}
\]
where $(\bar{y},\bar{c})$ represents a dual solution, so $\bar{q}=q(\bar{y},\bar{c})=M_D=M_P\ge q_k$ for every $k$.

\subsection{Definition and Convergence Analysis}

In this section, we define the (DSG) algorithm and establish its convergence properties. We start by defining the Deflected Subgradient Algorithm (DSG).

\begin{algorithm}\label{alg:DSG}
\begin{quote}\rm
  
{\bf Deflected Subgradient Algorithm (DSG)}

{\bf Step $0$}.  Choose $(y_0,c_0)\in H\times \R_+$ such that
$q(y_0,c_0)>-\infty$, and exogenous parameters $\epsilon>0$ (a prescribed tolerance),  $\delta < 1$,
$\{\alpha_k\} \subset (0,\alpha)$ for some $\alpha>0$, and $\{r_k\}\subset\R_+ $ such that $r_k\rightarrow 0$.
Let $k:=0$.

\noindent {\bf Step $1$}. (Subproblem and Stopping Criterion)

$a)$ Find $(x_k,z_k) \in X_{{r}_{k}}(y_k,c_k)$,

$b)$ if $z_k=0$ and $r_k \leq\epsilon$ stop,

$c)$ if $z_k=0$ and $r_k> \epsilon$, then $r_k:=\delta r_k$ and go to $(a)$,

$d)$ if $z_k \neq 0$ go to Step $2$.

\noindent {\bf Step $2$}. (Selection of the stepsize and Updating the Variables)

 Consider $s_k>0$ a stepsize and define

 $y_{k+1}:=y_k - s_kA(z_k)$,

 $c_{k+1}:= c_k + (\alpha_k+1)s_k\sigma(z_k)$,

 $k:=k+1$, go to Step $1$.
  
\end{quote}

\end{algorithm}

\begin{remark}\rm
 Note that, when $A=0$, DSG becomes a classical penalty method. For $A:=I$ the identity map in $H$, we recover IMSg Algorithm defined in \cite[Section 3]{BIMinexact2013}.

\end{remark}

\begin{remark}\rm
\label{Rem:sigma coercive}
By Remark \ref{step1a}, when $r_k>0$, there exists $(x_k,z_k)\in X_{r_k}(y_k,c_k)$ as in Step 1(a), showing that the inexact version of the algorithm is always well defined. When $r_k=0$ for all $k$ we obtain the exact version, which stops at the first $k$ for which $z_k=0$. The well-definedness of the exact version is shown below in Proposition \ref{exactDSG}. In the latter result, we give conditions under which Step 1(a) of the exact version can be performed. Our analysis includes a choice of $\sigma$ either as in part (a) or as in part (a'), of Definition \ref{def:assumptions_sigma}.  

\begin{remark}\label{rem:eps-sub}\rm
 Using Proposition \ref{gradz0}, we see that Step 2 in Algorithm \ref{alg:DSG} is nothing but an epsilon subgradient step for the maximization of the dual function.  
\end{remark}

\begin{remark}\rm
If $z_k\in N(A)$, then $y_{k+1}=y_k$ and $c_{k+1}>c_k$. Therefore $y_k$ is not updated in this case. The situation in which $z_k\in N(A)$ does not  pose a problem in terms of convergence. Indeed, our results hold for $A=0$, so that $N(A)=X$.
\end{remark}

%
\end{remark}


We establish next properties that hold for every stepsize $s_k>0$ and for every $\alpha_k\in (0,\alpha)$. The proof of the equivalence between statements (a) and (b) is standard, and, with minimal changes, follows the same steps of \cite[Proposition 3.1(ii)]{BIMinexact2013}. We include its short proof here, however, because the expressions involved in the proof will often be used in later results.

\begin{proposition}[Characterization of dual convergence]\label{dualSeq}
\noindent Let $\{(x_k,z_k)\}$ and $\{(y_k,c_k)\}$ be the sequences generated by {\rm DSG}, and assume that ${\bf (A_0)}$ holds. The following statements are equivalent.
    \begin{itemize}
        \item[(a)]  The dual sequence $\{(y_k,c_k)\}$ is bounded.
        \item[(b)] $\sum_{k}
s_k\sigma(z_k) < +\infty.$
        \item[(c)] The dual sequence $\{(y_k,c_k)\}$ converges strongly to a limit.
         \item[(d)]  The sequence $\{c_k\}$ is Cauchy.
         \item[(e)] The sequence $\{c_k\}$ is bounded.
    \end{itemize}
    Furthermore, if  $\{c_k\}$ is bounded, then $\{y_k\}$ is also bounded.
\end{proposition}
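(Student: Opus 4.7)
My plan is to exploit two simple structural observations. First, the update rule $c_{k+1}=c_k+(\alpha_k+1)s_k\sigma(z_k)$ together with $\sigma\geq 0$ makes $\{c_k\}$ monotone non-decreasing, with increments comparable (up to absolute multiplicative constants) to $s_k\sigma(z_k)$, since $\alpha_k+1\in(1,\alpha+1)$. Second, the update $y_{k+1}=y_k-s_k A(z_k)$, combined with assumption ${\bf (A_0)}$, gives the key comparison $\|y_{k+1}-y_k\|=s_k\|A(z_k)\|\leq s_k\sigma(z_k)$. These two facts drive the entire cyclic equivalence.

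I would first establish (b) $\Leftrightarrow$ (d) $\Leftrightarrow$ (e) by telescoping: $c_N=c_0+\sum_{k=0}^{N-1}(\alpha_k+1)s_k\sigma(z_k)$. Since the summands are non-negative and comparable to $s_k\sigma(z_k)$, the partial sums $c_N$ are bounded if and only if $\sum_k s_k\sigma(z_k)<+\infty$. Monotonicity of $\{c_k\}$ then makes boundedness, convergence in $\R$, and the Cauchy property mutually equivalent, yielding the equivalence of (b), (d), and (e).

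For the remaining implications, the flow (b) $\Rightarrow$ (c) $\Rightarrow$ (a) $\Rightarrow$ (e) closes the loop. For (b) $\Rightarrow$ (c), the comparison coming from ${\bf (A_0)}$ yields $\sum_k\|y_{k+1}-y_k\|\leq\sum_k s_k\sigma(z_k)<+\infty$, so $\{y_k\}$ is Cauchy in the Hilbert space $H$, hence strongly convergent. Combined with convergence of $\{c_k\}$ (which follows from (b) via the already-established equivalence with (d)), we get strong convergence of $\{(y_k,c_k)\}$. The implications (c) $\Rightarrow$ (a) and (a) $\Rightarrow$ (e) are immediate from the definitions of strong convergence and of boundedness in the product.

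The \emph{furthermore} claim is then essentially free: if $\{c_k\}$ is bounded, then (e) holds, hence (b) holds, hence (c) holds by the argument above, and in particular $\{y_k\}$ converges strongly and is therefore bounded. I do not foresee a significant obstacle in this proof; the only conceptual point is recognizing that ${\bf (A_0)}$ is precisely what converts summability of $s_k\sigma(z_k)$ into summability of the $y$-increments, and that the inexactness tolerances $\{r_k\}$ and the specific stepsize rule play no role in this purely sequential argument.
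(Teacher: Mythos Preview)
Your proposal is correct and follows essentially the same approach as the paper: both rely on the telescoping identity $c_{N}-c_0=\sum_{j<N}(\alpha_j+1)s_j\sigma(z_j)$ with $\alpha_j+1\in(1,\alpha+1)$ to tie boundedness/Cauchyness of $\{c_k\}$ to summability of $s_k\sigma(z_k)$, and on ${\bf (A_0)}$ to bound the $y$-increments by $s_k\sigma(z_k)$, making $\{y_k\}$ Cauchy. The only difference is the order in which the implications are chained (you first settle (b)$\Leftrightarrow$(d)$\Leftrightarrow$(e) and then close the loop via (b)$\Rightarrow$(c)$\Rightarrow$(a)$\Rightarrow$(e), whereas the paper starts with (a)$\Leftrightarrow$(b)), but the substance is identical.
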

\begin{proof}
Using ${\bf (A_0)}$ and the definition of $\{y_k\}$, we obtain
\begin{equation}\label{eqdualseq1}
\|y_{k+1}-y_0\|\le \sum_{j=0}^k \|y_{j+1}-y_j\|= \sum_{j=0}^k s_j \|A(z_j)\|\le \sum_{j=0}^k s_j \sigma(z_j).
\end{equation}
On the other hand, by definition of $\{c_k\}$ we have
\begin{equation}
\label{eqdualseq2} c_{k+1} - c_0 = \sum_{j=0}^kc_{j+1}-c_j=\sum_{j=0}^k
(\alpha_j + 1)s_j\sigma(z_j)\le (\alpha + 1)\sum_{j=0}^k
s_j\sigma(z_j),
\end{equation}
where we used the fact that $\alpha_k<\alpha$ for every $k$. We prove first the equivalence between (a) and (b). If (b) holds, then \eqref{eqdualseq1} and \eqref{eqdualseq2} readily yield (a). Conversely, assume that (a) holds.  Using the left hand side of  \eqref{eqdualseq2} and (a) gives the existence of $M>0$ such that
\begin{equation}
\label{eqdualseq3} 
M\ge c_{k+1} - c_0 = \sum_{j=0}^kc_{j+1}-c_j=\sum_{j=0}^k
(\alpha_j + 1)s_j\sigma(z_j)\ge \sum_{j=0}^k s_j\sigma(z_j),
\end{equation}
where we used the fact that $\alpha_j>0$ for all $j$. Since the inequality above holds for all $k$, we must have $\sum_{k}^{\infty}
s_k\sigma(z_k) \le M$ and we deduce (b).
Let us now show that (a) is equivalent to (c). Clearly (c) implies (a), so it is enough to show that (a) implies (c). Assume that (a) holds. Then the sequence $\{c_k\}$ is bounded. Since it is strictly increasing, it must be convergent. In particular, this implies that the sequence $\{c_k\}$ is Cauchy. We will show now that $\{y_k\}$ is also Cauchy with respect to the norm. Indeed, for every $k,j\in \dN$ we can write
\begin{equation}
\label{eqdualseq4} 
c_{k+j} - c_k = \sum_{l=k}^{k+j-1} c_{l+1}-c_l=\sum_{l=k}^{k+j-1}
(\alpha_l + 1)s_l\sigma(z_l)\ge \sum_{l=k}^{k+j-1}
s_l\sigma(z_l) \ge \| y_{k+j} - y_k\|,
\end{equation}
where we used again the fact that $\alpha_j>0$ for all $j$ in the first inequality. The last inequality is obtained as in \eqref{eqdualseq1}, but with $k+j-1$ in place of $k$ and $k$ in place of $0$. Since $\{c_k\}$ is Cauchy, then for all $j\in \dN$ we have
\begin{equation}
\label{eqdualseq5} 
0= \lim_{k\to\infty} c_{k+j} - c_k\ge \lim_{k\to\infty}\| y_{k+j} - y_k\|\ge 0,
\end{equation}
so $\{y_k\}$ is also Cauchy as claimed. Our claim is true and since $H$ is complete, the sequence $\{y_k\}$ strongly converges to a limit $\bar y$. Altogether, $\{(y_k,c_k)\}$ is strongly convergent, so (a) implies (c). Since (c) implies (d), to complete the proof it is enough to show that (d) implies (c). This is achieved in a similar way as in (a) implies (c). Indeed, if $\{c_k\}$ is Cauchy, then by \eqref{eqdualseq5} we deduce that $\{y_k\}$ is also Cauchy, and hence by completeness of $H\times \R$, we deduce that (c) holds. We clearly have that (d) implies (e). If (e) holds, then by \eqref{eqdualseq2} and \eqref{eqdualseq1} we must have $\{y_k\}$ also bounded, hence (a) holds.  The proof is complete.
\medskip
\end{proof}
\medskip

The next result establishes the well-definedness of the algorithm, namely that the minimization performed in Step 1 has a solution. We establish this fact either when $\sigma$ is coercive or when it is as in Definition \ref{def:assumptions_sigma}(a'). The proof of part (i) in the next result follows the steps of \cite[Proposition 3.2]{BIMinexact2013}, so we omit its proof. Part  (ii) uses the weaker assumption on $\sigma$, namely conditional coercivity.

\begin{proposition}[Well-definedness of DSG]\label{exactDSG}
Consider $s\ge M_P$, $({\hat y},{\hat c})$, and the sets $T,\tilde T(s)$ as in Theorem \ref{th:compact_level_set_Lagrangian}. 
\begin{itemize}
\item[(i)] Assume that $\sigma$ verifies Definition \ref{def:assumptions_sigma}(a). Take $y_0:={\hat y}$ and $c_0>{\hat c}$. Then, the dual  sequence $\{(y_k,c_k)\}$ generated by the exact version of DSG with $(y_0,c_0)$  is well-defined. Namely, the set 
\[
X(y_k,c_k)\\:=\{(x,z) \in X\times H:
f(x,z)-\langle A(z),y_k\rangle + c_k\sigma(z)
= q(y_k,c_k)\},
\]
is nonempty for all $k\ge0$. 
\item[(ii)] Assume that $\sigma$ verifies Definition \ref{def:assumptions_sigma}(a'). Take $y_0:={\hat y}$ and $c_0>{\hat c} +\frac{s-q({\hat y}, {\hat c})}{K_{\sigma}}$. Then the same conclusion in (i) holds.
\end{itemize}
In particular, as long as $(y_0,c_0)$ is chosen as in (i) or (ii) for the corresponding type of $\sigma$, we will have that  the exact version of DSG is well defined; that is to say, for all $k\ge1$, there exists $(x_k,z_k)\in X\times H$ satisfying $ q(y_k,c_k)=[f(x_k,z_k)-\langle A(z_k),y_k\rangle + c_k\sigma(z_k)]\in \R$ for every $k\ge 1$.
\end{proposition}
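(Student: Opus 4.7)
The plan is a straightforward induction: I will show that the dual iterates $(y_k,c_k)$ stay inside the set $\tilde T(s)$ from Theorem~\ref{th:compact_level_set_Lagrangian}(iiiB), and then invoke that theorem to conclude that the infimum defining $q(y_k,c_k)$ is attained.

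For the base case $k=0$, the hypothesis $y_0=\hat y$ gives $\|y_0-\hat y\|=0$, and the hypothesis on $c_0$ states exactly $c_0 > \hat c + \frac{s-q(\hat y,\hat c)}{K_\sigma} + \|y_0-\hat y\|$, so $(y_0,c_0)\in \tilde T(s)$. Applying Theorem~\ref{th:compact_level_set_Lagrangian}(iiiB) with $(w,c):=(y_0,c_0)$ yields a pair $(x_0,z_0)$ satisfying \eqref{Lem:eq2}, i.e.\ $(x_0,z_0)\in X(y_0,c_0)$, and also gives $q(y_0,c_0)\in\R$ (finiteness follows from $\tilde T(s)\subset T$ and part (ii) of Theorem~\ref{th:compact_level_set_Lagrangian}, together with properness of $\varphi$).

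For the inductive step, assume $(y_k,c_k)\in \tilde T(s)$ and that $(x_k,z_k)\in X(y_k,c_k)$ has been produced by Step~1 of the exact DSG. I use assumption ${\bf\hyperlink{A0}{(A_0)}}$ and the update rules of Step~2 to estimate
\[
\|y_{k+1}-\hat y\| \;\le\; \|y_k-\hat y\| + s_k\|A(z_k)\| \;\le\; \|y_k-\hat y\| + s_k\sigma(z_k),
\]
while $c_{k+1}=c_k+(\alpha_k+1)s_k\sigma(z_k)\ge c_k+s_k\sigma(z_k)$ since $\alpha_k>0$. Subtracting, the $s_k\sigma(z_k)$ terms cancel and the inductive hypothesis gives
\[
c_{k+1}-\|y_{k+1}-\hat y\| \;\ge\; c_k-\|y_k-\hat y\| \;>\; \hat c + \frac{s-q(\hat y,\hat c)}{K_\sigma},
\]
which is precisely the condition $(y_{k+1},c_{k+1})\in\tilde T(s)$. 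One more application of Theorem~\ref{th:compact_level_set_Lagrangian}(iiiB) produces a minimizer $(x_{k+1},z_{k+1})$ of $\Phi_{(y_{k+1},c_{k+1})}$, so $X(y_{k+1},c_{k+1})\neq\emptyset$ and $q(y_{k+1},c_{k+1})\in\R$, completing the induction.

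The only substantive point, and the reason that the weaker coercivity assumption (a') still works, is that the ``buffer'' $\frac{s-q(\hat y,\hat c)}{K_\sigma}$ appearing in the definition of $\tilde T(s)$ is preserved across iterations: the monotone contributions of the $A$-step and the $c$-step exactly cancel in the invariant $c_k-\|y_k-\hat y\|$. This inductive invariant is the place where care is needed; once it is in hand, Theorem~\ref{th:compact_level_set_Lagrangian}(iiiB) does the rest by supplying the weakly compact level set and the attained minimum. The final sentence of the proposition, which asserts well-definedness of every iteration with $q(y_k,c_k)\in\R$, is an immediate consequence.
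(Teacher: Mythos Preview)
Your proof is correct and follows essentially the same strategy as the paper: show that $(y_k,c_k)\in\tilde T(s)$ for all $k$ and then invoke Theorem~\ref{th:compact_level_set_Lagrangian}(iiiB). The only cosmetic difference is that the paper establishes the invariant $c_k-\|y_k-\hat y\|>\hat c+\frac{s-q(\hat y,\hat c)}{K_\sigma}$ in one stroke using the telescoping estimates \eqref{eqdualseq1}--\eqref{eqdualseq2} (and notes explicitly that $z_j\neq 0$ for $j<k$ since the exact algorithm would otherwise have stopped), whereas you obtain the same invariant by a one-step induction; the two arguments are equivalent.
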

\begin{proof} (i) Similar to \cite[Proposition 3.2]{BIMinexact2013}.
(ii) By assumption $(y_0,c_0)\in \tilde T(s)$, we know by Theorem \ref{th:compact_level_set_Lagrangian}(iiiB) that there exists $(x_0,z_0)\in X(y_0,c_0)$. 
If $z_0=0$ the algorithm stops at $k=0$ and the claim in (ii) holds for the single iterate $(y_0,c_0)$. Assume that $k\ge 1$ (i.e., the algorithm does not stop at $k=0$ and hence $z_0\not=0$).  Let us show that $(y_k,c_k)\in \tilde T(s)$ for every $k\ge 1$. From \eqref{eqdualseq1}, we have $\sum_{j=0}^{k-1} s_j \sigma(z_j)\ge \|y_{k}-y_0\|$  and from \eqref{eqdualseq2} we have $ c_{k} - c_0 =\sum_{j=0}^{k-1}
(\alpha_j + 1)s_j\sigma(z_j)$. Altogether, we have
\[
\begin{array}{rcl}
c_k&=&c_0+\sum_{j=0}^{k-1}(\alpha_j + 1)s_j\sigma(z_j)
\ge c_0+ \|y_{k}-y_0\|+\sum_{j=0}^{k-1}\alpha_j s_j\sigma(z_j)\\
&&\\
&&>c_0+ \|y_{k}-y_0\|>({\hat c} +\dfrac{s-q({\hat y}, {\hat c})}{K_{\sigma}})+ \|y_{k}-y_0\|,
\end{array}
\]
where the first strict inequality follows from the fact that $z_j\not=0$ (equivalently, $\sigma(z_j)\not=0$) for every $0\le j\le k-1$ (otherwise the algorithm would have stopped at some $j< k$), and the second strict inequality uses the definition of $c_0$. Therefore, $(y_k,c_k)\in  \tilde T(s)$ for every $k\ge 1$ and the result follows from Theorem \ref{th:compact_level_set_Lagrangian}(iiiB).The last assertion of the proposition follows directly from (i) and (ii).
\end{proof}

\bigskip
Part (a) of the following result is proved for $A=I$ and $\sigma$ coercive in \cite[Lemma 3.2]{BIMinexact2013}, and establishes the boundedness of the sequences $\{z_k\}$  and $\{\sigma(z_k)\}$ without any additional assumptions on the parameters of DSG.  Since the proof for the case involving the map $A$ and $\sigma$  as in  Definition \ref{def:assumptions_sigma}(a) follows the same steps as the ones in \cite[Lemma 3.2]{BIMinexact2013}, we omit its proof, and analyze below the case for $\sigma$ conditionally coercive.

\begin{proposition}\label{sigma_k bounded}
Assume that ${\bf (A_0)}$ holds and that $z_0\not=0$. Fix $\tilde r$ an upper bound of $\{r_k\}$. 
\begin{itemize}
\item[(a)] If $\sigma$ is as in Definition \ref{def:assumptions_sigma}(a) and $(y_0,c_0)$ in DSG Algorithm is taken as in Proposition \ref{exactDSG}(i), then  the sequences $\{z_k\}$  and $\{\sigma(z_k)\}$ are bounded.
\item[(b)] If $\sigma$ is conditionally coercive with constant $K_{\sigma}$ and $(y_0,c_0)$ in DSG Algorithm is taken as in Proposition \ref{exactDSG}(ii). Then, the sequence $\{\sigma(z_k)\}$ is bounded. Furthermore,  if the parameters $\alpha_0,\, s_0$ in DSG are chosen such that 
\begin{equation}\label{eq:K}
\alpha_0 s_0 >\frac{M_P-q(y_0,c_0)+\tilde r}{K_{\sigma}\,\sigma(z_0)},
\end{equation}
then $\{z_k\}$ is bounded.
\end{itemize}
\end{proposition}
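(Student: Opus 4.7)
The plan is to focus on part (b), since part (a) is deferred to \cite{BIMinexact2013}. The key idea is to derive, from the inexact inclusion $(x_k,z_k)\in X_{r_k}(y_k,c_k)$ together with the variational definition of $q$, a single inequality bounding $(c_k-c_0-\|y_k-y_0\|)\,\sigma(z_k)$ from above, and then to bound the prefactor $c_k-c_0-\|y_k-y_0\|$ from below using the DSG updates and the fact that $z_0\ne 0$. This mirrors the compactness argument in Theorem \ref{th:compact_level_set_Lagrangian}(iiiB), but applied to the iterates rather than to an arbitrary level set.

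For the upper bound, I would note that $q(y_0,c_0)\le f(x_k,z_k)-\langle A(z_k),y_0\rangle+c_0\sigma(z_k)$ by definition of the dual function, while $(x_k,z_k)\in X_{r_k}(y_k,c_k)$ gives $f(x_k,z_k)-\langle A(z_k),y_k\rangle+c_k\sigma(z_k)\le q(y_k,c_k)+r_k$. Subtracting one from the other, applying Cauchy--Schwarz together with assumption ${\bf (A_0)}$ to control $|\langle A(z_k),y_k-y_0\rangle|\le \|y_k-y_0\|\,\sigma(z_k)$, and using $q(y_k,c_k)\le M_D=M_P$ and $r_k\le \tilde r$ would yield
\[
(c_k-c_0-\|y_k-y_0\|)\,\sigma(z_k)\;\le\; M_P+\tilde r-q(y_0,c_0).\qquad(\star)
\]

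For the lower bound on the prefactor, the same telescoping computations as in Proposition \ref{dualSeq}, namely $c_k-c_0=\sum_{j=0}^{k-1}(\alpha_j+1)s_j\sigma(z_j)$ and $\|y_k-y_0\|\le \sum_{j=0}^{k-1}s_j\sigma(z_j)$, would give
\[
c_k-c_0-\|y_k-y_0\|\;\ge\;\sum_{j=0}^{k-1}\alpha_j s_j\sigma(z_j)\;\ge\;\alpha_0 s_0\sigma(z_0),
\]
which is strictly positive because $z_0\ne 0$ forces $\sigma(z_0)>0$ by Definition \ref{def:assumptions_sigma}(b). Combining with $(\star)$ produces $\sigma(z_k)\le (M_P+\tilde r-q(y_0,c_0))/(\alpha_0 s_0\sigma(z_0))$ for every $k\ge 1$, which establishes boundedness of $\{\sigma(z_k)\}$. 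Under the additional hypothesis \eqref{eq:K}, the same upper bound is strictly smaller than $K_\sigma$, so by conditional coercivity the iterates $\{z_k\}_{k\ge 1}$ lie in the bounded level set $lev_\sigma(K_\sigma)$, and adjoining $z_0$ yields boundedness of $\{z_k\}$.

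I expect the only real subtlety to be making sure the lower bound does not collapse to zero: specifically, that $\sigma(z_j)>0$ for all $j<k$ so that $\alpha_0 s_0\sigma(z_0)$ is a genuine positive lower bound. This is guaranteed because DSG would have terminated earlier via Step 1(b)--(c) if any $z_j$ with $j<k$ were zero. Once this bookkeeping is in place, the remainder is straightforward algebraic manipulation of $(\star)$ and the telescoping identities, followed by direct invocation of Definition \ref{def:assumptions_sigma}(a') for the final boundedness of $\{z_k\}$.
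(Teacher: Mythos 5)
Your proof of part (b) is correct and follows essentially the same route as the paper: the same telescoping lower bound $c_k-c_0-\|y_k-y_0\|\ge \alpha_0 s_0\sigma(z_0)$ and the same upper bound $(c_k-c_0-\|y_k-y_0\|)\sigma(z_k)\le M_P-q_0+\tilde r$, followed by the comparison with $K_\sigma$ under \eqref{eq:K}. The only cosmetic difference is that where the paper invokes Proposition \ref{gradz0}(i) together with the $r_k$-supergradient inequality, you derive the same estimate directly from the definitions of $q$ and $X_{r_k}(y_k,c_k)$, which is an equivalent computation.
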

\begin{proof} (a) Similar to \cite[Lemma 3.2]{BIMinexact2013}. Let us prove the first statement in (b), namely the boundedness of $\{\sigma(z_k)\}$. If the algorithm stops at iteration $k_0$, then the sequences $\{\sigma(z_k)\}$ and $\{z_k\}$ are finite and therefore bounded.  Indeed, in the latter case, the sequence either stops (if $r_{k_0}\le \epsilon $), or it goes into a finite inner loop until $r_{k_0}\le \epsilon $. In either case, the sequences $\{z_k\}$  and $\{\sigma(z_k)\}$ are finite and their boundedness trivially holds. Hence, it is enough to assume that Step 2 is visited at every $k\ge 0$ and hence $z_k\neq 0$ for every $k\ge 0$.  Call $a_0:= \alpha_0 s_0\sigma(z_0)>0$. From \eqref{eqdualseq1} we deduce for all $k\geq 1$,
\[
\begin{array}{rcl}
c_{k}-c_0&=&\sum_{l=0}^{k-1}
(\alpha_l + 1)s_l\sigma(z_l)=

\sum_{l=0}^{k-1} s_l\sigma(z_l) + \sum_{l=0}^{k-1} \alpha_l s_l \sigma(z_l)\\
&&\\
&&\ge \sum_{l=0}^{k-1} s_l\sigma(z_l) + \alpha_0 s_0\sigma(z_0)\ge \sum_{l=0}^{k-1} s_l\| Az_l \| + \alpha_0 s_0\sigma(z_0)\\
&&\\
&&\ge \|y_k - y_0 \| + \alpha_0 s_0\sigma(z_0)=\|y_k-y_0\|
+a_0,
\end{array}
\]
where we used ${\bf (A_0)}$ in the second inequality, \eqref{eqdualseq1} in the last one, and the definition of $a_0$ in the last equality. Re-arrange this expression to obtain
\begin{equation}\label{ee7}
c_k - c_0 - \|y_k-y_0\| \geq a_0.
\end{equation}
By Proposition \ref{gradz0}(i), we know that
$(-A(z_k),\sigma(z_k)) \in \partial_{r_k} q(y_k,c_k)$. Use the subgradient inequality to write, for every $k$,
\[
\begin{array}{rcl}
-\infty< q_0=q(y_0,c_0)&\leq& q(y_k,c_k) + \langle -A(z_k),y_0-y_k\rangle +(c_0-c_k)\sigma(z_k) + r_k
\\[3mm] &\leq& q_k + \|A(z_k)\|\|y_k-y_0\| + (c_0-c_k)\sigma(z_k) +r_k
\\[3mm] &\leq& q_k  + \sigma(z_k)\left(\|y_k-y_0\| + c_0 - c_k\right) + \tilde{r}
\\[3mm] &\leq& q_k  - a_0\sigma(z_k) + \tilde{r}\le q_k + \tilde{r},
\end{array}
\]
where we used Cauchy-Schwarz inequality, ${\bf (A_0)}$, and \eqref{ee7}. The above expression yields the boundedness of $\{\sigma(z_k)\}$. Indeed, it re-arranges to
\[\sigma(z_k)\leq \frac{M_D-q_0 +\tilde{r}}{a_0}:=b.\]
Hence $\sigma(z_k)\leq b$ for all $k$ and the proof of the first statement is complete. Let us prove now that, if \eqref{eq:K} holds, then we also have that 
$\{z_k\}$ is bounded.  Indeed,  \eqref{eq:K} directly implies that $a_0=\alpha_0 s_0\sigma(z_0) >\frac{M_D-q_0 +\tilde{r}}{K_{\sigma}}$ and hence the above expression becomes
\[
\sigma(z_k)\leq \frac{M_D-q_0 +\tilde{r}}{a_0} <K_{\sigma},
\]
which implies that $\{z_k\}$ is bounded by definition of $K_{\sigma}$. The proof is complete.
\end{proof}

\medskip

We show next that, if an iterate generated by DSG is a dual solution, then the exact version of DSG must stop, either at the current iteration or at the next one. This result holds for either type of $\sigma$. 

\begin{proposition}\label{prop:dualsol}
Assume that ${\bf (A_0)}$ holds and assume DSG has $r_k=0$ for all $k$. If the $k$th DSG iterate is a dual solution, then either $z_k=0$ or $z_{k+1}=0$. Consequently, in this situation DSG will stop at iteration $k$ or $k+1$.
\end{proposition}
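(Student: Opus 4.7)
The plan is to assume $z_k \neq 0$ (otherwise the claim is immediate) and then deduce that $z_{k+1}=0$ via a short algebraic manipulation combining Proposition \ref{gradz0}(i), weak duality, assumption $(A_0)$, and the definition of the DSG updates.

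More precisely, since $r_k=0$ and $(y_k,c_k)\in S(D)$, Step 1(a) picks $(x_k,z_k)$ that \emph{exactly} minimizes $\Phi_{(y_k,c_k)}$, so that
\[
f(x_k,z_k)-\langle A(z_k),y_k\rangle + c_k\sigma(z_k)=q(y_k,c_k)=M_D.
\]
If $z_k=0$ the proposition holds with the stopping test at iteration $k$. Assume now $z_k\neq 0$, so Step 2 produces the updates $y_{k+1}=y_k-s_kA(z_k)$ and $c_{k+1}=c_k+(\alpha_k+1)s_k\sigma(z_k)$, and the algorithm moves on with $r_{k+1}=0$; hence there exists $(x_{k+1},z_{k+1})$ with
\[
q(y_{k+1},c_{k+1})=f(x_{k+1},z_{k+1})-\langle A(z_{k+1}),y_{k+1}\rangle + c_{k+1}\sigma(z_{k+1}).
\]

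Next I would substitute the updates for $y_{k+1}$ and $c_{k+1}$ into the right hand side and regroup so that the terms $f(x_{k+1},z_{k+1})-\langle A(z_{k+1}),y_k\rangle + c_k\sigma(z_{k+1})$ appear. By definition of $q(y_k,c_k)$ as an infimum, that expression is bounded below by $q(y_k,c_k)=M_D$. This yields
\[
q(y_{k+1},c_{k+1})\;\ge\; M_D + s_k\langle A(z_{k+1}),A(z_k)\rangle+(\alpha_k+1)s_k\sigma(z_k)\sigma(z_{k+1}).
\]
By Proposition \ref{prop:dualProperty}(iii) (weak duality) the left hand side is at most $M_P=M_D$, so after cancelling $M_D$ and dividing by $s_k>0$,
\[
0\;\ge\;\langle A(z_{k+1}),A(z_k)\rangle + (\alpha_k+1)\sigma(z_k)\sigma(z_{k+1}).
\]
Applying Cauchy--Schwarz together with assumption ${\bf (A_0)}$, $\langle A(z_{k+1}),A(z_k)\rangle\ge -\|A(z_{k+1})\|\|A(z_k)\|\ge -\sigma(z_{k+1})\sigma(z_k)$. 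Substituting gives $\alpha_k\,\sigma(z_k)\sigma(z_{k+1})\le 0$, and since $\alpha_k>0$ and $\sigma\ge 0$, necessarily $\sigma(z_k)\sigma(z_{k+1})=0$. Because we are in the case $z_k\ne 0$ (so $\sigma(z_k)>0$ by Definition \ref{def:assumptions_sigma}(b)), we conclude $\sigma(z_{k+1})=0$, hence $z_{k+1}=0$, and the stopping criterion in Step 1(b) fires at iteration $k+1$ (since $r_{k+1}=0\le\varepsilon$).

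The only delicate point is Step 1(a) being well-posed at iteration $k+1$ in the exact regime; however this is not really a new obstacle, because Proposition \ref{exactDSG} (under either hypothesis on $\sigma$) guarantees that $X(y_{k+1},c_{k+1})\neq\emptyset$ whenever $(y_0,c_0)$ is chosen appropriately, which is already standing in this section. Equivalently, one can avoid invoking an explicit $(x_{k+1},z_{k+1})$ by using Proposition \ref{gradz0}(i) at iteration $k$: the subgradient inequality $q(y_{k+1},c_{k+1})\le q(y_k,c_k)+\langle -A(z_k),y_{k+1}-y_k\rangle+(c_{k+1}-c_k)\sigma(z_k)$ combined with the reverse inequality obtained from the dual-solution property of $(y_k,c_k)$ leads to the same identity $\alpha_k s_k \sigma(z_k)\sigma(z_{k+1})=0$, but the first route is cleaner and fits the flow of the chapter.
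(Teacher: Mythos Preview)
Your proof is correct and follows essentially the same route as the paper: both assume $z_k\neq 0$, expand $q(y_{k+1},c_{k+1})$ using the DSG updates, bound the bracketed Lagrangian term below by $q(y_k,c_k)=M_D$, apply Cauchy--Schwarz together with ${\bf (A_0)}$, and conclude $\alpha_k\sigma(z_k)\sigma(z_{k+1})\le 0$. Your additional remark about the well-posedness of Step~1(a) at iteration $k+1$ (via Proposition~\ref{exactDSG}) is a welcome clarification that the paper's proof leaves implicit.
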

\begin{proof}
Assume that, at iteration $k$, we have that $(y_k,c_k)\in S(D)$. This means that $q_k=q(y_k,c_k)=M_D$. It is enough to prove that, if $z_k\neq 0$, then $z_{k+1}=0$. Assume that $z_k\neq 0$, by ${\bf (A_0)}$, we clearly have that
\begin{equation}\label{prop:eq1}
\sigma({z}_k) \sigma({z}_{k+1}) -\|A{z}_k \|\,\|A {z}_{k+1} \|\ge 0.
\end{equation}
Take $({x}_{k+1}, {z}_{k+1})\in X(y_{k+1},c_{k+1})$. 
With the notation of DSG, denote $\varepsilon_k:=\alpha_k s_k$. 
Using the fact that $(y_k,c_k)\in S(D)$ and the definitions of $q$ and $({x}_{k+1}, {z}_{k+1})$, we can write
\[
\begin{array}{rcl}
 M_D\ge q_{k+1}&=&f({x}_{k+1}, {z}_{k+1})-\langle A {z}_{k+1},y_{k+1}  \rangle + c_{k+1} \sigma({z}_{k+1})  \\
 &&\\
  &=&  f({x}_{k+1}, {z}_{k+1})-\langle A {z}_{k+1},y_{k}-s_k Az_k  \rangle + (c_{k} +(s_k+\varepsilon_k)\sigma({z}_k)) \sigma({z}_{k+1})\\
 &&\\
  &=&  f({x}_{k+1}, {z}_{k+1})-\langle A {z}_{k+1},y_{k}\rangle +s_k \langle A {z}_{k+1}, Az_k  \rangle + (c_{k} +(s_k+\varepsilon_k)\sigma({z}_k)) \sigma({z}_{k+1})\\

  &&\\
   &\ge &  \left[ f({x}_{k+1}, {z}_{k+1})-\langle A {z}_{k+1},y_{k}  \rangle + c_{k} \sigma({z}_{k+1})\right]  + s_k \left(\sigma({z}_k) \sigma({z}_{k+1}) - \| A{z}_k\|\, \|A {z}_{k+1}\| \right)\\
    &&\\
   &&   +\varepsilon_k \sigma({z}_k)\sigma({z}_{k+1})\ge  q_k +\varepsilon_k \sigma({z}_k)\sigma({z}_{k+1})=M_D +\varepsilon_k \sigma({z}_k)\sigma({z}_{k+1}),
\end{array}
\]
where we used the definition of DSG in the third equality. We also used \eqref{prop:eq1} and the definition of $q_k$ in the second to last inequality. This shows that $\varepsilon_k \sigma({z}_k)\sigma({z}_{k+1})\le 0$. Since both $\varepsilon$ and $\sigma({z}_k)$ are assumed to be positive, we must have $\sigma({z}_{k+1})=0$ and hence $z_{k+1}=0$.
\end{proof}
\medskip

The following theorem states that DSG guarantees a monotonic increase of the dual function. If the initial iterate is taken as in Theorem \ref{th:compact_level_set_Lagrangian}, we know that the algorithm is well defined for either type of $\sigma$ (coercive or conditionally coercive). Assuming this is the case, the proof of the result below follows similar steps to those in \cite[Theorem 3.1]{BIMinexact2013} and hence are omitted. 

\begin{theorem}\label{increase} 
Assume that {\rm DSG} generates an infinite sequence $\{(y_k,c_k)\}$ and that for every $k$, $(y_k,c_k)$ is not a dual solution.  Then $q(y_{k+1},c_{k+1})>q(y_k,c_k)$.
\end{theorem}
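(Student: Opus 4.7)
The plan is to derive a lower bound of the form
\[
q_{k+1} \ge q_k + \alpha_k s_k \sigma(z_k)\sigma(z_{k+1}) - r_{k+1}
\]
and then read off strict monotonicity from positivity of the factors $\alpha_k,s_k,\sigma(z_k),\sigma(z_{k+1})$. Since DSG produces an infinite sequence, Step 1 never terminates via the stopping rule 1(b), so at every iteration it must reach Step 1(d); hence $z_k\neq 0$ for all $k$, and by Definition \ref{def:assumptions_sigma}(b) we have $\sigma(z_k)>0$.

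Next, I would use the membership $(x_{k+1},z_{k+1})\in X_{r_{k+1}}(y_{k+1},c_{k+1})$, which gives
\[
\Phi_{(y_{k+1},c_{k+1})}(x_{k+1},z_{k+1}) \le q_{k+1}+r_{k+1}.
\]
Substituting the update rules $y_{k+1}=y_k-s_k A(z_k)$ and $c_{k+1}=c_k+(\alpha_k+1)s_k\sigma(z_k)$ into $\Phi_{(y_{k+1},c_{k+1})}(x_{k+1},z_{k+1})$ and regrouping gives
\[
\Phi_{(y_{k+1},c_{k+1})}(x_{k+1},z_{k+1}) = \Phi_{(y_k,c_k)}(x_{k+1},z_{k+1}) + s_k\langle A(z_{k+1}),A(z_k)\rangle + (\alpha_k+1)s_k\sigma(z_k)\sigma(z_{k+1}).
\]
Cauchy--Schwarz combined with ${\bf (A_0)}$ yields $\langle A(z_{k+1}),A(z_k)\rangle \ge -\sigma(z_{k+1})\sigma(z_k)$, so the mixed term absorbs exactly one copy of $s_k\sigma(z_k)\sigma(z_{k+1})$, leaving the positive residue $\alpha_k s_k\sigma(z_k)\sigma(z_{k+1})$. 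Together with $\Phi_{(y_k,c_k)}(x_{k+1},z_{k+1})\ge q_k$ from the definition of infimum, this produces the target estimate. The chain mirrors the computation used in Proposition \ref{prop:dualsol}.

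The main obstacle is converting this bound into the \emph{strict} inequality $q_{k+1}>q_k$. In the exact version ($r_k\equiv 0$) the conclusion is immediate because all four factors $\alpha_k,s_k,\sigma(z_k),\sigma(z_{k+1})$ are strictly positive. In the inexact version one must appeal to the stepsize selection rule (specified later in Section \ref{sec:DSG}) being chosen so that $\alpha_k s_k\sigma(z_k)\sigma(z_{k+1})>r_{k+1}$ whenever $(y_k,c_k)$ is not a dual solution; this is precisely where the non-optimality hypothesis enters and where the argument parallels \cite[Theorem 3.1]{BIMinexact2013}.
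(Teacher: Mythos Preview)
Your computation of the lower bound is correct and matches the chain in Proposition~\ref{prop:dualsol}. The gap is in the last paragraph: you apply the bound with the \emph{algorithm's} iterate $(x_{k+1},z_{k+1})\in X_{r_{k+1}}(y_{k+1},c_{k+1})$, which forces the extraneous $-r_{k+1}$ term, and then you try to dispose of it by invoking a stepsize rule. But Theorem~\ref{increase} is stated for the generic DSG (arbitrary $s_k>0$, arbitrary $r_k\to 0$), \emph{before} any stepsize rule is fixed; there is no condition in the algorithm tying $\alpha_k s_k\sigma(z_k)\sigma(z_{k+1})$ to $r_{k+1}$, so this step does not go through.

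The fix, and the point of the hypothesis you are not really using, is to evaluate the bound at an \emph{exact} minimizer. The paragraph preceding the theorem assumes the initial iterate is as in Proposition~\ref{exactDSG}, so $X_0(y_{k+1},c_{k+1})\neq\emptyset$; take $(\tilde x,\tilde z)$ there. Your same computation with $r_{k+1}$ replaced by $0$ gives
\[
q_{k+1}=\Phi_{(y_{k+1},c_{k+1})}(\tilde x,\tilde z)\ge q_k+\alpha_k s_k\,\sigma(z_k)\,\sigma(\tilde z).
\]
Now the non-optimality hypothesis enters: if $\tilde z=0$ then Proposition~\ref{gradz0}(ii) (with $r=0$) would make $(y_{k+1},c_{k+1})$ a dual solution, contradicting the assumption. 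Hence $\sigma(\tilde z)>0$, and since $z_k\neq 0$ (infinite sequence) and $\alpha_k,s_k>0$, the strict inequality $q_{k+1}>q_k$ follows for every stepsize, with no appeal to a later stepsize rule. This is the route taken in \cite[Theorem~3.1]{BIMinexact2013}.
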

\begin{proof} Similar to \cite[Theorem 3.1]{BIMinexact2013}. 
\end{proof}

From now on, we assume that $z_k\neq0$ for all $k$. In other words, we assume that the method generates an infinite sequence. We will also assume that the initial  iterate and parameters are chosen so that, for either type of $\sigma$, the previous results and properties hold. The technical result below has a proof similar to the one in \cite[Lemma 3.3]{BIMinexact2013} and hence is omitted.

\begin{lemma}\label{estimate}
Consider the sequences
$\{(x_k,z_k)\}$, $\{(y_k,c_k)\}$ generated by {\rm DSG} algorithm.
\begin{itemize}
\item [{\rm(a)}] The following estimates hold for all $k\geq 1$
\begin{eqnarray}
\label{lemequat1} f(x_k,z_k)-\langle A(z_k),y_0\rangle&\leq& q_k +r_k,\;\mbox{and}\\[3mm]
\label{lemequat2} \sigma(z_k)\sum_{j=0}^{k-1}\alpha_js_j\sigma(z_j)&\leq& q_k-q_0+r_k.
\end{eqnarray}
\item [{\rm(b)}] Assume that the dual solution set $S(D)$ is nonempty. If $(\bar{y},\bar{c})\in S(D)$
then for all $k$,
\begin{equation}\label{lemequat3}
\|y_{k+1}-\bar{y}\|^2 \leq  \|y_k-\bar{y}\|^2 + 2s_k\sigma(z_k)\left[\frac{s_k\sigma(z_k)}{2} + \ds\frac{q_k-\bar{q}+r_k}{\sigma(z_k)} + \bar{c}-c_k \right].
\end{equation}
\end{itemize}
\end{lemma}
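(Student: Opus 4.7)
My plan is to derive each of the three estimates by combining the defining properties of the iterates with the $r_k$-superdifferential inequality from Proposition \ref{gradz0}(i) and the norm bound ${\bf (A_0)}$. All three estimates can be obtained by elementary algebra once the right identities are exposed, so the main work is bookkeeping rather than a deep argument.

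For (a), the first estimate, I would start from the membership $(x_k,z_k)\in X_{r_k}(y_k,c_k)$, which rewrites as
\[
f(x_k,z_k)-\langle A(z_k),y_k\rangle + c_k\sigma(z_k)\le q_k+r_k.
\]
Subtracting $\langle A(z_k),y_0-y_k\rangle$ from both sides, it suffices to show that $\langle A(z_k),y_k-y_0\rangle\le c_k\sigma(z_k)$. Using the telescoping $y_k-y_0=-\sum_{j=0}^{k-1}s_jA(z_j)$ together with Cauchy--Schwarz and ${\bf (A_0)}$ gives
\[
|\langle A(z_k),y_k-y_0\rangle|\le \sigma(z_k)\sum_{j=0}^{k-1}s_j\sigma(z_j),
\]
while the recursion $c_{k+1}=c_k+(\alpha_k+1)s_k\sigma(z_k)$ telescopes to $c_k\ge c_0+\sum_{j=0}^{k-1}s_j\sigma(z_j)\ge \sum_{j=0}^{k-1}s_j\sigma(z_j)$. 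Multiplying the latter by $\sigma(z_k)$ closes the gap.

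For the second estimate in (a), I would apply Proposition \ref{gradz0}(i), which gives $(-A(z_k),\sigma(z_k))\in \partial_{r_k}q(y_k,c_k)$, to the point $(y_0,c_0)$:
\[
q_0\le q_k+\langle A(z_k),y_k-y_0\rangle+\sigma(z_k)(c_0-c_k)+r_k.
\]
Rearranging yields
\[
q_k-q_0+r_k\ge -\langle A(z_k),y_k-y_0\rangle +\sigma(z_k)(c_k-c_0).
\]
The same bounds on $\langle A(z_k),y_k-y_0\rangle$ and on $c_k-c_0=\sum_{j=0}^{k-1}(\alpha_j+1)s_j\sigma(z_j)$ as above collapse the right-hand side exactly to $\sigma(z_k)\sum_{j=0}^{k-1}\alpha_js_j\sigma(z_j)$, which is the desired estimate.

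For (b), I would expand the square using the update rule:
\[
\|y_{k+1}-\bar y\|^2=\|y_k-\bar y\|^2-2s_k\langle A(z_k),y_k-\bar y\rangle+s_k^2\|A(z_k)\|^2.
\]
By ${\bf (A_0)}$, $\|A(z_k)\|^2\le \sigma(z_k)^2$, giving the term $2s_k\sigma(z_k)\cdot \tfrac{s_k\sigma(z_k)}{2}$. Then I apply Proposition \ref{gradz0}(i) at $(\bar y,\bar c)$ to get
\[
-\langle A(z_k),y_k-\bar y\rangle \le q_k-\bar q+\sigma(z_k)(\bar c-c_k)+r_k,
\]
multiply by $2s_k$, factor $\sigma(z_k)$ out of the bracket, and add the three contributions. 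The only mild subtlety is the sign handling in the bound on $\langle A(z_k),y_k-y_0\rangle$ in (a)(second), and the fact that $\sigma(z_k)>0$ is needed to divide in (b) — this is guaranteed by the standing assumption $z_k\neq 0$ made right before the lemma, so there is no real obstacle. The whole argument is essentially a disciplined unpacking of Proposition \ref{gradz0}(i) plus ${\bf (A_0)}$.
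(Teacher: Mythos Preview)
Your proof is correct and follows the natural approach: all three estimates come from combining the $r_k$-superdifferential inequality of Proposition~\ref{gradz0}(i) with the telescoping identities for $y_k-y_0$ and $c_k-c_0$ and the norm bound ${\bf (A_0)}$. The paper omits the proof, deferring to \cite[Lemma~3.3]{BIMinexact2013}, and your reconstruction is exactly what that argument amounts to in the present setting with the map $A$.
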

\begin{proof} Similar to  \cite[Lemma 3.3]{BIMinexact2013}.
\end{proof}

The following result holds for either type of $\sigma$. The only new result involved in its proof is the fact that, for $\sigma$ conditionally coercive, strong duality holds. Again, due to the similarity of the proof techniques with  \cite[Lemma 3.4]{BIMinexact2013}, we omit its proof here.

\begin{lemma}\label{zPrimalCvg}If the sequence
$\{z_k\}$ converges weakly to $0$, then  $\{q_k\}$ converges to $\bar{q}$,
the primal sequence $\{x_k\}$ is bounded, and all its weak accumulation points are primal solutions.
\end{lemma}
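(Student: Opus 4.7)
The plan is to extract all three conclusions---convergence $q_k\to\bar q$, boundedness of $\{x_k\}$, and weak accumulation points being primal solutions---from a single chain of inequalities built around estimate \eqref{lemequat1}, the w-upper-semicontinuity of $\langle A(\cdot),y_0\rangle$ provided by ${\bf\hyperlink{A1}{(A_1)}}$, and the weak level-compactness of $f$.

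First I would obtain a uniform upper bound on $\{f(x_k,z_k)\}$. Starting from \eqref{lemequat1} in Lemma~\ref{estimate}(a), and using Cauchy--Schwarz together with ${\bf\hyperlink{A0}{(A_0)}}$,
\[
f(x_k,z_k)\le q_k+r_k+\langle A(z_k),y_0\rangle\le \bar q+r_k+\sigma(z_k)\|y_0\|.
\]
Since $\{\sigma(z_k)\}$ is bounded by Proposition~\ref{sigma_k bounded} and $r_k\to 0$, this yields $f(x_k,z_k)\le\alpha$ for some $\alpha\in\R$ and every $k$. Applying the weak level-compactness of $f$ at $\bar z=0$ (Definition~\ref{level-f}), there exist a weakly open $U\ni 0$ in $H$ and a weakly compact $B\subset X$ with $\{x\in X: f(x,z)\le\alpha\}\subset B$ for all $z\in U$. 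Since $z_k\rightharpoonup 0$, eventually $z_k\in U$ and hence $x_k\in B$, so $\{x_k\}$ is bounded; by Eberlein--Smulian (Theorem~\ref{th:ES}) every subsequence admits a weak accumulation point in $B$.

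For any such accumulation point $\bar x$, obtained along $x_{k_j}\rightharpoonup\bar x$ (equivalently $(x_{k_j},z_{k_j})\rightharpoonup(\bar x,0)$ in $X\times H$), the w-lsc of $f$ from \hyperlink{H2}{(H2)} gives $\varphi(\bar x)=f(\bar x,0)\le\liminf_j f(x_{k_j},z_{k_j})$. Combining this with \eqref{lemequat1} once more and using the w-usc of $\langle A(\cdot),y_0\rangle$ from ${\bf\hyperlink{A1}{(A_1)}}$ together with $A(0)=0$ (which follows from ${\bf\hyperlink{A0}{(A_0)}}$ and Definition~\ref{def:assumptions_sigma}(b)),
\[
\liminf_{j} f(x_{k_j},z_{k_j})\le \hat q+\limsup_{j}\langle A(z_{k_j}),y_0\rangle\le \hat q+\langle A(0),y_0\rangle=\hat q,
\]
where $\hat q:=\lim_k q_k$ exists because $\{q_k\}$ is increasing (Theorem~\ref{increase}) and bounded above by $\bar q=M_P$. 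Chaining the inequalities, $M_P\le\varphi(\bar x)\le\hat q\le\bar q=M_P$, which forces $\hat q=\bar q$ (hence $q_k\to\bar q$) and $\varphi(\bar x)=M_P$ (hence $\bar x\in S(P)$); since $\bar x$ was an arbitrary weak accumulation point, this handles all of them at once.

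The pivotal step---and the one I expect to be the most delicate---is the choice to anchor the argument on \eqref{lemequat1}, which keeps the fixed initial multiplier $y_0$ in view, rather than on the defining inequality of $X_{r_k}(y_k,c_k)$, which features the running $y_k$. Because $\{y_k\}$ may be unbounded whenever $\{c_k\}$ is unbounded, any attempt to control $\limsup_j \langle A(z_{k_j}),y_{k_j}\rangle$ directly would fail; substituting $y_0$ is what lets ${\bf\hyperlink{A1}{(A_1)}}$ bound the relevant $\limsup$ by $\langle A(0),y_0\rangle=0$, which is the hinge on which the entire argument turns.
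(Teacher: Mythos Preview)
Your argument is essentially correct and follows the route the paper defers to (that of \cite[Lemma~3.4]{BIMinexact2013}): anchoring on estimate~\eqref{lemequat1} with the \emph{fixed} multiplier $y_0$, invoking weak level-compactness of $f$ at $\bar z=0$ to trap $\{x_k\}$ in a weakly compact set, and then chaining the w-lsc of $f$ with the w-usc of $\langle A(\cdot),y_0\rangle$ and $A(0)=0$ to squeeze $\varphi(\bar x)$ between $M_P$ and $\bar q$. Your closing remark about why one must work with $y_0$ rather than the possibly unbounded $y_k$ is precisely the point of Lemma~\ref{estimate}(a).

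One small technical wrinkle: your appeal to Theorem~\ref{increase} for the existence of $\hat q=\lim_k q_k$ invokes a hypothesis---that no $(y_k,c_k)$ is a dual solution---which is not among the standing assumptions (only $z_k\neq 0$ is assumed, and in the inexact case this does not preclude $(y_k,c_k)\in S(D)$). You can bypass monotonicity entirely: suppose $\liminf_k q_k=:L<\bar q$, pick a subsequence with $q_{k_j}\to L$, pass to a further subsequence along which $x_{k_{j_l}}\rightharpoonup\bar x$ (possible since $\{x_k\}$ eventually lies in the weakly compact $B$), and run your same chain to obtain $M_P\le\varphi(\bar x)\le L<\bar q=M_P$, a contradiction. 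This yields $q_k\to\bar q$ directly and also shows every weak accumulation point is primal optimal, with no reliance on Theorem~\ref{increase}.
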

\begin{proof} Similar to  \cite[Lemma 3.4]{BIMinexact2013}.
\end{proof}


\subsection{Algorithm DSG-1}\label{ssec:alg1}
We consider in this section the stepsize similar to the one given in \cite[Algorithm 1]{BIMinexact2013}, and use it for our particular scheme. The difference is the use of the function $A$ in the choice of the stepsize (see the definition of $\eta_k$ below). Take two parameters $\beta>\eta>0$.  We consider the step size
\begin{equation}\label{sk_DSG1}
s_k\in [\eta_k,\beta_k],
\end{equation}
where $\eta_k:=\min\{\eta,\|A(z_k)\|+\|z_k\|\}$ and $\beta_k:=\max\{\beta,\sigma(z_k)+\|z_k\|\}$. 
With this choice of $s_k$, we denote the DSG~algorithm as DSG-1.
\begin{remark}\rm
If ${\bf (A_0)}$ holds, then
\[\eta_k\leq \|A(z_k)\|+\|z_k\|
\leq \sigma(z_k)+\|z_k\|\leq\beta_k,
\]
where first and last inequalities use the definition of $\eta_k,\beta_k$. The second inequality holds by ${\bf (A_0)}$. Note that, a constant stepsize for all iterations is admissible.
\end{remark}

\noindent The next theorem only requires a $\sigma$ which satisfies the following property:
\begin{equation}\label{prop}
\hbox{ if } \sigma(w_k)\downarrow 0 \hbox{ then }\{w_k\} \hbox{ bounded.}
\end{equation} 
Its proof considers two possible cases, according to whether the dual sequence $(y_k,c_k)$ is bounded or not. The case of an unbounded sequence has a proof similar to the one \cite[Theorem 3.2]{BIMinexact2013}, and hence is omitted. The case of bounded dual sequence $(y_k,c_k)$ is slightly different because of our different type of stepsize, so we provide it here.

\begin{theorem}\label{primalCvg1}
Assume that $\sigma$ is an augmenting function verifies that  if $\sigma(w_k)\downarrow 0$, then $\{w_k\}$ bounded, and assume that $M_P=M_D$. Consider the primal sequence $\{x_k\}$ generated by {\rm DSG-1}. Take the parameter sequence $\{\alpha_k\}$ satisfying $\alpha_k\ge\bar \alpha$ for all $k$ and some $\bar \alpha>0$. Then $\{x_k\}$ is bounded, all its weak accumulation points are primal solutions, and $\{q_k\}$ converges to the optimal value $M_P$.
\end{theorem}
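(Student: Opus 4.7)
The plan is to dichotomize on the boundedness of the dual sequence $\{(y_k,c_k)\}$, which by Proposition~\ref{dualSeq} is equivalent to dichotomizing on whether $\sum_k s_k\sigma(z_k)$ is finite or infinite. The target in both cases is the hypothesis $z_k\rightharpoonup 0$ of Lemma~\ref{zPrimalCvg}; once this is in hand, Lemma~\ref{zPrimalCvg} immediately delivers boundedness of $\{x_k\}$, the primal-optimality of its weak accumulation points, and $q_k\to M_P$.

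In the unbounded dual case I would follow the template of \cite[Theorem 3.2]{BIMinexact2013} with only cosmetic changes accounting for the map $A$: the divergence $\sum_j s_j\sigma(z_j)=+\infty$ combined with estimate~\eqref{lemequat2}, the upper bound $q_k\le M_D$, and the uniform lower bound $\alpha_j\ge\bar\alpha$ together force $\sigma(z_k)\to 0$. The standing assumption on $\sigma$ (that $\sigma(w_k)\downarrow 0$ implies $\{w_k\}$ bounded) plus the w-lsc of $\sigma$ and the identification $\sigma(z)=0\Leftrightarrow z=0$ from Definition~\ref{def:assumptions_sigma}(b) then yield $z_k\rightharpoonup 0$, because every weak accumulation point $z^*$ of the bounded sequence $\{z_k\}$ satisfies $\sigma(z^*)\le\liminf\sigma(z_k)=0$, hence $z^*=0$.

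The delicate case is the bounded dual sequence. Proposition~\ref{dualSeq} provides strong convergence $(y_k,c_k)\to(\tilde y,\tilde c)$ and $s_k\sigma(z_k)\to 0$, but crucially does \emph{not} provide any uniform lower bound on $s_k$. Here I would partition the indices as $I_1:=\{k:\|A(z_k)\|+\|z_k\|\ge\eta\}$ and $I_2:=\dN\setminus I_1$. On $I_1$ the step-size rule \eqref{sk_DSG1} forces $s_k\ge\eta_k=\eta$, so $\sigma(z_k)\to 0$ along $I_1$ and $z_k\rightharpoonup 0$ along $I_1$ by the same w-lsc/bounded-level-set argument as in the unbounded case. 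On $I_2$ one has $\|z_k\|<\eta$ (so boundedness is automatic) and $s_k\ge\|A(z_k)\|+\|z_k\|$, hence $(\|A(z_k)\|+\|z_k\|)\sigma(z_k)\to 0$; a subsequence-of-subsequence argument then shows that every subsequence of $\{z_k\}_{k\in I_2}$ admits a further subsequence converging to $0$ --- strongly when $\sigma(z_k)$ stays bounded away from $0$ (forcing $\|z_k\|\to 0$), and weakly when $\sigma(z_k)\to 0$ (as before). Combining the two index sets yields $z_k\rightharpoonup 0$ on $\dN$, and Lemma~\ref{zPrimalCvg} closes the argument.

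The principal obstacle is precisely this bounded dual case: the step size $\eta_k$ is allowed to shrink to $0$ whenever $\|A(z_k)\|$ and $\|z_k\|$ both shrink, so the naive implication $s_k\sigma(z_k)\to 0\Rightarrow\sigma(z_k)\to 0$ fails. The $I_1/I_2$ partition is designed to exploit the compensating fact that small $s_k$ forces small $\|z_k\|$, so that weak convergence of $z_k$ to $0$ is still recovered, but through a different mechanism on each part of the partition.
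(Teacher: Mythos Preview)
Your proposal is correct and follows essentially the same route as the paper: dichotomize on the boundedness of the dual sequence, reduce the unbounded case to \cite[Theorem~3.2]{BIMinexact2013} via \eqref{lemequat2}, and in the bounded case use $s_k\sigma(z_k)\to 0$ together with the lower bound $s_k\ge\eta_k=\min\{\eta,\|A(z_k)\|+\|z_k\|\}$ to force $z_k\rightharpoonup 0$, finishing with Lemma~\ref{zPrimalCvg}. Your explicit $I_1/I_2$ index partition is a slightly more careful rendering of the paper's one-line argument, which simply writes $s_k\sigma(z_k)\ge\min\{\eta\sigma(z_k),\|z_k\|\sigma(z_k)\}$ and then asserts that either $\|z_k\|\to 0$ or $\sigma(z_k)\to 0$; your version makes the subsequence-of-subsequence mechanism explicit, but the content is the same.
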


\begin{proof}
Take the dual sequence $\{(y_k,c_k)\}$ generated by DSG-1. If $\{(y_k,c_k)\}$ is unbounded, then the proof is similar to the corresponding part of \cite[Theorem 3.2]{BIMinexact2013}. We proceed to consider the case in which $\{(y_k,c_k)\}$ is bounded. By Proposition \ref{dualSeq} $(i)$, $\sum_k s_k\sigma(z_k)<\infty$. In particular, $\{s_k\sigma(z_k)\}$ converges to $0$. On the other hand, from $s_k\ge\min\{\eta,\|A(z_k)\|+\|z_k\|\}$, we obtain
\[
s_k\sigma(z_k)\geq \min\{\eta\sigma(z_k),(\|A(z_k)\|+\|z_k\|)\sigma(z_k)\}
\geq \min\{\eta\sigma(z_k),\|z_k\|\sigma(z_k)\}> 0,
\]
because $z_k\not=0$ for all $k$. Since $\eta>0$, we conclude that $\{\|z_k\|\}$ converges to $0$ or $\sigma(z_k)$ converges to $0$. We will show that either case implies that $\{z_k\}$ weakly converges to $0$. If $\{\|z_k\|\}$ converges to $0$ then $\{z_k\}$ converges strongly to $0$, and hence weakly to $0$. Alternatively, if $\sigma(z_k)$ converges to $0$, then $\{z_k\}$ is bounded by assumption. Then, there exists a subsequence $\{z_{k_j}\}$ weakly converging to some $\tilde{z}$. From the weak lower semicontinuity of $\sigma(\cdot)$, we have $0\le\sigma(\tilde{z})\le\liminf_{k\to\infty}\sigma(z_{k_j})=0$. Hence $\sigma(\tilde{z})=0$. The properties of $\sigma$ now imply that $\tilde{z}=0$. Therefore, the whole sequence $\{z_k\}$ weakly converges to $0$. Thus, in the case that $\{(y_k,c_k)\}$ is bounded, the results follows from Lemma \ref{zPrimalCvg} and the zero duality gap property $\bar q=M_P$. 
%
\end{proof}

The following corollary holds because a conditionally coercive $\sigma$ induces strong duality and also verifies  \eqref{prop}.

\begin{corollary}
If $\sigma$ verifies Definition \ref{def:assumptions_sigma}(b) with either (a) or (a'),  then  the conclusion of Theorem \ref{primalCvg1} holds.
\end{corollary}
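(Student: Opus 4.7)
The plan is to verify the two hypotheses required by Theorem \ref{primalCvg1}, namely the strong duality identity $M_P=M_D$ and property \eqref{prop} (i.e., $\sigma(w_k)\downarrow 0$ implies $\{w_k\}$ bounded), and then invoke that theorem.

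First I would handle strong duality. Condition (a) in Definition~\ref{def:assumptions_sigma} is strictly stronger than (a'), as explicitly noted in the paragraph following that definition. Hence in either case the augmenting function satisfies Definition~\ref{def:assumptions_sigma}(a')(b). Since the hypotheses of Theorem~\ref{th:StrongDual} are assumed to hold throughout this section (as stated in the displayed line just before Theorem~\ref{th:compact_level_set_Lagrangian}), Theorem~\ref{th:StrongDual} immediately delivers $M_P=M_D$.

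Next I would verify property \eqref{prop}. Suppose $\sigma(w_k)\downarrow 0$. In case (a), $\sigma$ is coercive and w-lsc, so by Corollary~\ref{cor:coer} every level set $\mathrm{lev}_\sigma(\alpha)$ is weakly compact and therefore bounded. Choose any $\alpha>0$; then $\sigma(w_k)\le\alpha$ for all sufficiently large $k$, so the tail of $\{w_k\}$ lies in the bounded set $\mathrm{lev}_\sigma(\alpha)$, and hence the whole sequence is bounded. In case (a'), Definition~\ref{def:assumptions_sigma}(a') states that $\mathrm{lev}_\sigma(K_\sigma)$ is bounded for the prescribed constant $K_\sigma>0$. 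Since $\sigma(w_k)\to 0$, for $k$ large enough we have $\sigma(w_k)\le K_\sigma$, so the tail of $\{w_k\}$ lies in $\mathrm{lev}_\sigma(K_\sigma)$ and is therefore bounded, whence $\{w_k\}$ is bounded.

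Having verified both $M_P=M_D$ and property \eqref{prop}, the hypotheses of Theorem~\ref{primalCvg1} are satisfied, and the conclusion follows directly from that theorem.

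I do not foresee any real obstacle: the argument is essentially a bookkeeping check that the two conditions (a) and (a') of Definition~\ref{def:assumptions_sigma} are each strong enough to (i) invoke Theorem~\ref{th:StrongDual} and (ii) imply property \eqref{prop}. The only mildly delicate point is remembering that in case (a') one must use the fact that eventually $\sigma(w_k)$ drops below the specific threshold $K_\sigma$ provided by the definition of conditional coercivity, rather than any arbitrary level.
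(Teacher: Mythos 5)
Your proposal is correct and follows essentially the same route as the paper's own (very terse) proof: show that both coercivity and conditional coercivity imply property \eqref{prop}, and invoke Theorem \ref{th:StrongDual} for $M_P=M_D$. The only difference is that you spell out the verification of \eqref{prop} in each case explicitly (correctly using the specific threshold $K_\sigma$ in case (a')), whereas the paper simply notes that a conditionally coercive $\sigma$ verifies \eqref{prop} and that coercivity is the stronger condition.
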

\begin{proof}
Since  a conditionally coercive $\sigma$ verifies  \eqref{prop}, the same holds for a coercive $\sigma$. By Theorem \ref{th:StrongDual}, strong duality holds. So we are in conditions of Theorem \ref{primalCvg1}.
\end{proof}

Theorem \ref{primalCvg1} above establishes primal convergence results for DSG-1, the following theorem establishes a dual convergence result, its proof is identical to \cite[Theorem 3.3]{BIMinexact2013} and hence omitted.

\begin{theorem}\label{dualCvg1}
If DSG-1 generates an infinite sequence $\{(y_k,c_k)\}$, then every weak accumulation point of $\{(y_k,c_k)\}$, if any, is a dual solution.
\end{theorem}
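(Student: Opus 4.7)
The plan is to combine three ingredients already established: weak upper semicontinuity of the dual function $q$ (Proposition \ref{prop:dualProperty}(i)), the convergence of the dual values $q_k$ to $M_D$ (Theorem \ref{primalCvg1} together with the standing assumption $M_P = M_D$), and weak duality (Proposition \ref{prop:dualProperty}(iii)).

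First I would let $(\bar y, \bar c)$ be a weak accumulation point of $\{(y_k,c_k)\}$ and extract a subsequence $\{(y_{k_j}, c_{k_j})\}$ weakly converging to $(\bar y, \bar c)$. Since $q$ is w-usc (equivalently, $-q$ is w-lsc) and weak upper semicontinuity implies its sequential version (see Remark \ref{rem:w_lsc}), I can apply the defining inequality along this weakly convergent subsequence to obtain
\[
q(\bar y, \bar c) \;\ge\; \limsup_{j\to\infty} q(y_{k_j},c_{k_j}) \;=\; \limsup_{j\to\infty} q_{k_j}.
\]

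Next, by Theorem \ref{primalCvg1}, the whole sequence $\{q_k\}$ converges to $M_P$, so in particular $\lim_{j} q_{k_j} = M_P = M_D$ (using strong duality, which we assume throughout this section). Combining with the previous display yields $q(\bar y, \bar c) \ge M_D$. On the other hand, weak duality (Proposition \ref{prop:dualProperty}(iii)) gives $q(\bar y, \bar c) \le M_D$. The two inequalities together force $q(\bar y, \bar c) = M_D$, which is precisely the statement that $(\bar y, \bar c) \in S(D)$.

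There is essentially no obstacle here: all the difficult work has already been done in Theorem \ref{primalCvg1}, where primal boundedness and the dual value convergence $q_k \to M_D$ were established under the DSG-1 stepsize rule. The present result is a short corollary-style consequence of weak upper semicontinuity of $q$, so the proof should fit in a few lines.
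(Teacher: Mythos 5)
Your proof is correct and is essentially the paper's own argument: the identical chain (sequential weak upper semicontinuity of $q$ along a weakly convergent subsequence, $q_k\to M_P=M_D$ from Theorem \ref{primalCvg1}, and weak duality to close the sandwich) appears verbatim in the paper's proof of Theorem \ref{dualconv}(i), and the omitted proof of Theorem \ref{dualCvg1} deferred to \cite{BIMinexact2013} follows the same route. The only point worth making explicit is that your argument inherits the hypotheses of Theorem \ref{primalCvg1} (namely that $\sigma$ satisfies \eqref{prop}, that $\alpha_k\ge\bar\alpha>0$, and strong duality $M_P=M_D$), which the statement of Theorem \ref{dualCvg1} does not list but which the paper carries as standing assumptions at this point of Section \ref{sec:DSG}.
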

\begin{proof} See \cite[Theorem 3.3]{BIMinexact2013}.
\end{proof}

We know that, when $\{c_k\}$ is bounded, then $\{y_k\}$ bounded by Proposition \ref{dualSeq}. The converse is not necessarily true, and it holds under an additional assumption which requires the sequences $\{r_k\}$ and $\sigma(z_k)$ to decrease at a similar rate.
\medskip

${\bf(\Gamma_0)}$: There exists $R>0$ such that
$r_k\leq R\sigma(z_k)$ for all $k$, that is $r_k\approx O(\sigma(z_k))$.

The proof of the next result is similar to \cite[Lemma 3.5]{BIMinexact2013} and hence omitted. 

\begin{lemma}\label{ycBounded} Assume that ${\bf(A_0)}$ and ${\bf(\Gamma_0)}$ hold. If the dual solution set is nonempty and $\{y_k\}$ bounded, then $\{c_k\}$ is bounded too.
\end{lemma}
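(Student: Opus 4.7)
The plan is to argue by contradiction: assume the monotone sequence $\{c_k\}$ is unbounded, hence $c_k \to +\infty$, and derive a contradiction with the hypothesis that $\{y_k\}$ is bounded. The starting point is the key estimate \eqref{lemequat3} of Lemma~\ref{estimate}(b), applied at a fixed dual solution $(\bar y,\bar c)\in S(D)$. Using weak duality (so that $q_k-\bar q\le 0$) together with assumption ${\bf(\Gamma_0)}$ (so that $r_k/\sigma(z_k)\le R$), the bracket in \eqref{lemequat3} is bounded above by $\tfrac{s_k\sigma(z_k)}{2}+R+\bar c-c_k$, which yields the cleaner inequality
\[
\|y_{k+1}-\bar y\|^2-\|y_k-\bar y\|^2 \;\le\; s_k^2\sigma(z_k)^2 \,+\, 2s_k\sigma(z_k)\bigl(R+\bar c-c_k\bigr).
\]

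Next, I would invoke the standing assumptions in force since after Lemma~\ref{zPrimalCvg} (the initial iterate and parameters having been chosen so that Propositions~\ref{exactDSG} and~\ref{sigma_k bounded} apply), together with the DSG stepsize rule, to conclude that both $\{\sigma(z_k)\}$ and $\{s_k\}$ are bounded by some constant $C>0$. Consequently $s_k^2\sigma(z_k)^2\le C\, s_k\sigma(z_k)$, and absorbing constants gives
\[
\|y_{k+1}-\bar y\|^2-\|y_k-\bar y\|^2 \;\le\; 2s_k\sigma(z_k)\left(R+\bar c+\tfrac{C}{2}-c_k\right).
\]
Under the contradiction hypothesis $c_k\to\infty$, we can choose $k_0$ such that for all $k\ge k_0$ we have $c_k-\bar c-R-C/2\ge 1$, so the right-hand side is bounded above by $-2s_k\sigma(z_k)$.

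Summing this telescoping inequality from $k_0$ to $N$ and using that $\|y_{N+1}-\bar y\|^2\ge 0$ together with the boundedness of $\{y_k\}$, we obtain
\[
2\sum_{k=k_0}^{N} s_k\sigma(z_k) \;\le\; \|y_{k_0}-\bar y\|^2,
\]
for every $N\ge k_0$, hence $\sum_{k} s_k\sigma(z_k)<\infty$. By the equivalence ``(b)$\Leftrightarrow$(e)'' in Proposition~\ref{dualSeq}, this forces $\{c_k\}$ to be bounded, contradicting our assumption that $c_k\to\infty$. This completes the proof.

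The main obstacle I anticipate is the quadratic term $s_k^2\sigma(z_k)^2$ appearing on the right-hand side of \eqref{lemequat3}, which could a priori grow fast enough to dominate the negative term $2s_k\sigma(z_k)(\bar c-c_k)$; the argument succeeds precisely because $\{\sigma(z_k)\}$ is bounded (Proposition~\ref{sigma_k bounded}) and the stepsize rule enforces boundedness of $\{s_k\}$ in terms of $\sigma(z_k)$ and $\|z_k\|$, so the quadratic term is absorbed into a linear multiple of $s_k\sigma(z_k)$. This is where the coercivity/conditional coercivity of $\sigma$, used upstream, plays its role, and is the reason why assumption ${\bf(\Gamma_0)}$ is precisely what is needed to bridge the gap between $\{y_k\}$ bounded and $\{c_k\}$ bounded.
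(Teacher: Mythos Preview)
Your argument is valid when the stepsize sequence $\{s_k\}$ is bounded, but this is an extra assumption that the lemma does not make: in the general DSG Algorithm~\ref{alg:DSG}, Step~2 only requires $s_k>0$, and indeed the Remark immediately following Lemma~\ref{ycBounded} states explicitly that the result holds ``regardless the choice of the stepsize $s_k$''. Your handling of the quadratic term $s_k^2\sigma(z_k)^2$ by invoking boundedness of $s_k$ therefore only covers DSG-1 (and DSG-2), not the general case. The coercivity of $\sigma$ and Proposition~\ref{sigma_k bounded} give boundedness of $\{\sigma(z_k)\}$, but not of $\{s_k\}$, so the step ``$s_k^2\sigma(z_k)^2\le C\,s_k\sigma(z_k)$'' is unjustified in the general framework.

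There is a much shorter, stepsize-independent argument that avoids \eqref{lemequat3} entirely. Take $(\bar y,\bar c)\in S(D)$ and use Proposition~\ref{gradz0}(i) together with the $r_k$-superdifferential inequality at $(\bar y,\bar c)$:
\[
0\le \bar q-q_k \le \langle A(z_k),\,y_k-\bar y\rangle + (\bar c-c_k)\sigma(z_k)+r_k
\le \sigma(z_k)\bigl[\|y_k-\bar y\|+\bar c-c_k+R\bigr],
\]
where Cauchy--Schwarz, ${\bf(A_0)}$ and ${\bf(\Gamma_0)}$ were used. Since $z_k\neq 0$ we have $\sigma(z_k)>0$, and dividing gives $c_k\le \|y_k-\bar y\|+\bar c+R$. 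Boundedness of $\{y_k\}$ now yields boundedness of $\{c_k\}$ directly, with no contradiction, no telescoping, no bound on $s_k$, and without invoking Proposition~\ref{sigma_k bounded} at all. This is presumably what the reference to \cite[Lemma~3.5]{BIMinexact2013} intends, and it explains why the lemma is genuinely stepsize-free.
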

\begin{proof} Similar to \cite[Lemma 3.5]{BIMinexact2013}.
%
%
\end{proof}

\begin{remark}\rm
Lemma \ref{ycBounded} holds under assumptions ${\bf(A_0)}$ and ${\bf(\Gamma_0)}$ in the general framework of DSG, regardless the choice of the stepsize $s_k$.
\end{remark}

The following result extends \cite[Proposition 3.3]{BIMinexact2013} to our general case, since our function $\sigma$ and our stepsize are different, it requires a slightly different proof.

\begin{proposition}\label{seq-dual}
Assume that ${\bf(\Gamma_0)}$ holds, and that we are in conditions of Proposition \ref{sigma_k bounded}(a) or (b). Assume also that {\rm DSG-1} generates an infinite dual sequence $\{(y_k,c_k)\}$. If the dual optimal set is nonempty then $\{(y_k,c_k)\}$ is bounded.
\end{proposition}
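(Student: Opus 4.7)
The plan is to argue by contradiction, using the recursion in Lemma~\ref{estimate}(b) together with Lemma~\ref{ycBounded} as the closing move. Suppose $\{(y_k,c_k)\}$ is unbounded. By Proposition~\ref{dualSeq}, this is equivalent to $\{c_k\}$ being unbounded, and since $\{c_k\}$ is strictly increasing (because $c_{k+1}=c_k+(\alpha_k+1)s_k\sigma(z_k)$ and $\sigma(z_k)>0$ under the standing assumption $z_k\neq 0$), this gives $c_k\to\infty$. The target is then to show that $\{y_k\}$ is nevertheless bounded: combined with assumption $(\Gamma_0)$ and the nonempty dual solution set, Lemma~\ref{ycBounded} would yield boundedness of $\{c_k\}$, a contradiction.

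To bound $\{y_k\}$ I would exploit the key estimate \eqref{lemequat3}. Under the hypotheses of Proposition~\ref{sigma_k bounded}(a) or (b) (in case (b), this includes condition \eqref{eq:K}, ensuring the boundedness of $\{z_k\}$ as well as $\{\sigma(z_k)\}$), the DSG-1 stepsize bound $s_k\le\beta_k=\max\{\beta,\sigma(z_k)+\|z_k\|\}$ immediately gives that $\{s_k\}$ is bounded, hence there is $M>0$ such that $s_k\sigma(z_k)\le M$ for all $k$. Moreover, by $(\Gamma_0)$ we have $r_k/\sigma(z_k)\le R$, and by weak duality combined with Proposition~\ref{prop:dualProperty}, $q_k-\bar q\le 0$. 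Substituting these bounds into \eqref{lemequat3} yields
\[
\|y_{k+1}-\bar y\|^2 \le \|y_k-\bar y\|^2 + 2s_k\sigma(z_k)\left[\frac{M}{2}+R+\bar c - c_k\right].
\]
Since $c_k\to\infty$, there exists $K$ such that $\bar c - c_k\le -M/2-R$ for all $k\ge K$, so the bracket is nonpositive from index $K$ onwards. Consequently, $\|y_{k+1}-\bar y\|\le\|y_k-\bar y\|$ for all $k\ge K$, giving boundedness of $\{y_k\}_{k\ge K}$ and, together with the finitely many earlier terms, boundedness of $\{y_k\}$. Lemma~\ref{ycBounded} then provides the desired contradiction.

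The main obstacle to watch is the uniform boundedness of $s_k\sigma(z_k)$, which is what makes the bracket in \eqref{lemequat3} controllable. Since $\beta_k$ may grow with $\sigma(z_k)+\|z_k\|$, one really needs both $\{\sigma(z_k)\}$ and $\{z_k\}$ bounded; that is precisely why the statement invokes Proposition~\ref{sigma_k bounded} in its stronger form (including \eqref{eq:K} in case (b)). Once this quantity is pinned down by a constant, the remainder is a routine application of the $c_k\to\infty$ dissipation mechanism.
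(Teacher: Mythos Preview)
Your proof is correct and follows essentially the same route as the paper: both arguments assume $\{c_k\}$ is unbounded, use Proposition~\ref{sigma_k bounded} to bound $s_k\sigma(z_k)$ uniformly, plug $(\Gamma_0)$ and $q_k\le\bar q$ into \eqref{lemequat3} to make the bracket eventually nonpositive, conclude that $\{y_k\}$ is bounded, and then invoke Lemma~\ref{ycBounded} for the contradiction. Your explicit remark that case~(b) requires condition~\eqref{eq:K} (so that $\{z_k\}$, not just $\{\sigma(z_k)\}$, is bounded) is a useful clarification that the paper leaves implicit.
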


\begin{proof} Under the conditions of Proposition \ref{sigma_k bounded}(a) or (b), we have that  $\{\sigma(z_k)\}$ and $\{z_k\}$ are bounded, so take $b>0$ such that $\sigma(z_k)+\|z_k\|<b$ for all $k$. By definition, $s_k\le\beta_k\le\max\{\beta,b\}=:\hat b$. In particular, $s_k\sigma(z_k)\le\ b \hat{b}=\bar{b}$ for all $k$. Let $R$ be as in ${\bf(\Gamma_0)}$ and take $(\bar y,\bar c)\in S(D)$. We claim that $\{(y_k,c_k)\}$ is bounded. We will show firstly that $\{c_k\}$ is bounded. Suppose by contradiction that $\{c_k\}$ is unbounded. Thus there exists $k_0$ such that $c_k\ge M:=\frac{\bar b}{2}+R+\bar c$ for all $k\ge k_0$. Observing that $q_k\le\bar q$ and using the estimates in \eqref{lemequat3}, we obtain
\begin{equation}
\begin{array}{rcl}
\|y_{k+1}-\bar{y}\|^2
&\leq&  \|y_k-\bar{y}\|^2 + 2s_k\sigma(z_k)\left[\ds\frac{s_k\sigma(z_k)}{2} + \ds\frac{q_k-\bar{q}+r_k}{\sigma(z_k)} + \bar{c}-c_k \right]\\
\\
&\le& \|y_k-\bar{y}\|^2 + 2s_k\sigma(z_k)\left[\ds\frac{\bar b}{2} + \ds\frac{r_k}{\sigma(z_k)} + \bar{c}-c_k \right]\\
&&\\
&\le& \|y_k-\bar{y}\|^2 + 2s_k\sigma(z_k)\left[\ds\frac{\bar b}{2} + R + \bar{c}-c_k \right]\\
&&\\
&\le& \|y_{k}-\bar{y}\|^2
\end{array}
\end{equation}
for all $k\ge k_0$. It follows that $\{\|y_k-\bar y\|\}$ is a decreasing sequence and hence $\{y_k\}$ is bounded. By Lemma \ref{ycBounded}, this entails a contradiction. Therefore, the dual sequence is bounded.
\end{proof}

Theorem \ref{dualconv}, which we prove next, establishes strong convergence of the whole dual sequence generated by DSG-1
to a dual solution. Theorem 3.4 in \cite{BIMinexact2013} relies on Fej\'er convergence properties and establishes only weak convergence of the dual sequence. Our proof is inspired by Theorem 5.1 in \cite{BIM2015} and uses the properties of $q$.

{
\begin{theorem}\label{dualconv}
If ${\bf (A_0)}$ holds and the parameter sequence $\{\alpha_k\}$ satisfies that $\alpha_k\ge\bar\alpha$ for all $k$ and some $\bar\alpha>0$, then the following hold.\hfill
\begin{itemize}
    \item[(i)] Assume that $\sigma$ is an augmenting function that verifies \eqref{prop}, and assume that $M_P=M_D$.  If the dual sequence generated by {\rm DSG-1} is bounded then $S(D)\neq {\varnothing}$ and the dual sequence converges strongly to a dual solution. 
    \item [(ii)] Assume that ${\bf (\Gamma_0)}$ holds,  and that we are in conditions of Proposition \ref{sigma_k bounded}(a) or (b). If $S(D)\neq {\varnothing}$, then the dual sequence generated by {\rm DSG-1} is strongly convergent to some dual solution.
\end{itemize}
\end{theorem}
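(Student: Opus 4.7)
The plan is to exploit Proposition \ref{dualSeq}, which shows that boundedness of $\{(y_k,c_k)\}$ is already equivalent to its strong convergence. So once boundedness of the dual sequence is secured, the only remaining task is to identify the strong limit as a dual solution; for this the weak upper semicontinuity of the dual function (Proposition \ref{prop:dualProperty}(i)), combined with the dual-value convergence $q_k\to M_D$ supplied by Theorem \ref{primalCvg1}, will suffice.

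For part (i), I would proceed as follows. By hypothesis $\{(y_k,c_k)\}$ is bounded, so Proposition \ref{dualSeq} (equivalence of (a) and (c)) produces a strong limit $(\bar y,\bar c)\in H\times\R_+$. The standing hypotheses give that $\sigma$ satisfies \eqref{prop} and that $M_P=M_D$, so Theorem \ref{primalCvg1} yields $\lim_k q_k = M_P = M_D$. Strong convergence implies weak convergence, and $q$ is weakly upper semicontinuous by Proposition \ref{prop:dualProperty}(i); therefore
\[
q(\bar y,\bar c)\ge \limsup_k q(y_k,c_k)= M_D.
\]
The reverse inequality $q(\bar y,\bar c)\le M_D$ is immediate from the definition of $M_D$, whence $(\bar y,\bar c)\in S(D)$, and in particular $S(D)\neq\varnothing$.

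For part (ii), the plan is first to secure boundedness of the dual sequence, and then to reduce to part (i). Under the conditions of Proposition \ref{sigma_k bounded}(a) or (b), $\sigma$ satisfies Definition \ref{def:assumptions_sigma}(b) together with either (a) or (a'). Both cases imply the property \eqref{prop}: if $\sigma(w_k)\downarrow 0$, then eventually $\sigma(w_k)\le K_\sigma$ (or any positive bound in the coercive case), which by Definition \ref{def:assumptions_sigma}(a)/(a') confines $\{w_k\}$ to a bounded level set; moreover, strong duality $M_P=M_D$ follows from Theorem \ref{th:StrongDual}. Assuming $S(D)\neq\varnothing$ and using ${\bf(\Gamma_0)}$, Proposition \ref{seq-dual} gives boundedness of $\{(y_k,c_k)\}$. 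Part (i) then applies and yields strong convergence of the dual sequence to a dual solution.

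The main obstacle is bookkeeping rather than any single deep step: the proof weaves together Proposition \ref{dualSeq}, Proposition \ref{sigma_k bounded}, Proposition \ref{seq-dual}, Theorem \ref{th:StrongDual}, Theorem \ref{primalCvg1}, and Proposition \ref{prop:dualProperty}(i). The delicate point, especially in part (ii), is ensuring that every prerequisite of Theorem \ref{primalCvg1}—namely \eqref{prop} and strong duality—is genuinely available under the particular combination of hypotheses in (a) or (b) of Proposition \ref{sigma_k bounded}, and that ${\bf(\Gamma_0)}$ is invoked exactly where Proposition \ref{seq-dual} requires it. I do not anticipate new analytic difficulties beyond this verification.
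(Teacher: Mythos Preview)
Your proposal is correct and follows essentially the same route as the paper: boundedness yields strong convergence via Proposition~\ref{dualSeq}, then weak upper semicontinuity of $q$ together with $q_k\to M_D$ from Theorem~\ref{primalCvg1} identifies the limit as a dual solution, while part~(ii) reduces to part~(i) through Proposition~\ref{seq-dual}. Your explicit verification in~(ii) that \eqref{prop} and strong duality hold under the hypotheses of Proposition~\ref{sigma_k bounded}(a) or~(b) is a welcome clarification that the paper leaves implicit.
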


\begin{proof}
(i) Since the dual sequence $\{(y_k,c_k)\}$ is bounded, it converges strongly to some $(\tilde y,\tilde c)$ by Proposition \ref{dualSeq}(a)(c). We only need to prove now that the limit $(\tilde y,\tilde c)\in S(D)$. Indeed, we can write
 \begin{equation}\label{eq:dualCvg1}
 M_P=\lim_k q(y_k,c_k)\le q(\tilde y,\tilde c)\le M_P,
\end{equation}
where the equality follows from Theorem \ref{primalCvg1} and strong duality (see Theorem \ref{th:StrongDual}), note that we can write a limit because the sequence $\{q_k\}$ is increasing. The first inequality follows from the fact that $q$ is weakly (and hence strongly) upper semicontinuous, and the last inequality is a consequence of the fact that $q(\tilde y,\tilde c)\le M_D\le M_P$. Therefore, we showed that  $(\tilde y,\tilde c)\in S(D)$ and the proof of (i) is complete.
We now show (ii). Since the dual set is non-empty, by Proposition \ref{seq-dual}, the dual sequence $\{(y_k,c_k)\}$ is bounded. By (i), we have $\{(y_k,c_k)\}$ converges strongly to some $(\tilde y,\tilde c)\in S(D)$.
\end{proof}

\medskip

The following straightforward corollary characterizes the existence of dual solutions.

\begin{corollary}\label{boundedDualIFF}
If ${\bf (A_0)}$ and ${\bf (\Gamma_0)}$ hold, and that we are in conditions of Proposition \ref{sigma_k bounded}(a) or (b). Assume that $\alpha_k\ge\bar\alpha>0$ for all $k$. The following statements are equivalent.\hfill
\begin{itemize}
    \item [(a)] The dual sequence generated by {\rm DSG-1} is bounded.
    \item [(b)] The dual set is not empty.
\end{itemize}
\end{corollary}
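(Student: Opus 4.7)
The plan is to assemble this corollary directly from two previous results, observing that the hypotheses in the statement are exactly what is needed to invoke each direction.

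For the implication (b) $\Rightarrow$ (a), I would simply cite Proposition \ref{seq-dual}. Its hypotheses are precisely $\mathbf{(\Gamma_0)}$ together with being in the setting of Proposition \ref{sigma_k bounded}(a) or (b), plus DSG-1 generating an infinite dual sequence. Since by assumption $S(D)\neq\varnothing$ and we are in the required setting, Proposition \ref{seq-dual} yields at once that $\{(y_k,c_k)\}$ is bounded.

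For the converse (a) $\Rightarrow$ (b), the natural tool is Theorem \ref{dualconv}(i), which delivers not only $S(D)\neq\varnothing$ but even strong convergence of the dual sequence to a dual solution. The only care needed is to verify that the two hypotheses of Theorem \ref{dualconv}(i) are satisfied in our setting: namely, (1) $\sigma$ is an augmenting function verifying property \eqref{prop} (``$\sigma(w_k)\downarrow 0$ implies $\{w_k\}$ bounded''), and (2) strong duality $M_P=M_D$. Both follow from being in the conditions of Proposition \ref{sigma_k bounded}(a) or (b): in case (a) $\sigma$ is coercive and in case (b) it is conditionally coercive, and in either case the level set $\{w:\sigma(w)\le K_\sigma\}$ is bounded, so \eqref{prop} is immediate (as observed in the paragraph preceding the corollary). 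Strong duality then follows from Theorem \ref{th:StrongDual}, whose hypotheses are implicitly present in the standing assumptions of Section~\ref{sec:DSG} (the dual function is finite at the starting iterate, which gives the required $\bar q > -\infty$). Note that $\alpha_k \ge \bar\alpha > 0$ is precisely what Theorem \ref{dualconv}(i) (via Theorem \ref{primalCvg1}) demands.

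There is no real obstacle here beyond bookkeeping: both directions reduce to invocations of previously established results. The only subtlety is to make explicit that being in the framework of Proposition \ref{sigma_k bounded}(a) or (b) already encodes the properties of $\sigma$ needed for \eqref{prop} and strong duality, so that Theorem \ref{dualconv}(i) is applicable. The proof should therefore be at most a few lines, consisting of pointers to Proposition \ref{seq-dual} and Theorem \ref{dualconv}(i) with a brief justification that the latter's hypotheses hold.
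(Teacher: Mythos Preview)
Your proposal is correct and matches the paper's approach: the paper simply notes that the hypotheses place us in the conditions of both parts (i) and (ii) of Theorem \ref{dualconv}, from which the equivalence follows. The only cosmetic difference is that for (b)$\Rightarrow$(a) you cite Proposition \ref{seq-dual} directly, whereas the paper cites Theorem \ref{dualconv}(ii) (whose proof is itself just an invocation of Proposition \ref{seq-dual}); this is not a substantive distinction.
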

\begin{proof}
The proof follows directly from the fact that the assumptions ensure that we are in conditions of both parts (i) and (ii) in Theorem \ref{dualconv}. 
\end{proof}
}
\subsection{Algorithm DSG-2}\label{sec:alg2}
In this section we adopt the same stepsize proposed in \cite[Algorithm 2]{BIMinexact2013}, which ensures that DSG converges in a finite number of steps. We show in this section that these convergence results are preserved when using the map $A$ in the Lagrangian.

\noindent Take  $\beta>0$ and a sequence $\{\theta_k\} \subset\R_+$  such that
$\sum_j\theta_j=\infty$, and $\theta_k\leq\beta$ for all $k$.
Consider the step size
\begin{equation}\label{sk_DSG2}
s_k\in [\eta_k,\beta_k],
\end{equation}
where $\eta_k:= \theta_k/\sigma(z_k)$ and $\beta_k:= \beta/\sigma(z_k)$. 
DSG with this stepsize selection is denoted by DSG-2.

The following result extends  \cite[Theorem 3.5]{BIMinexact2013}. By looking carefully at the proof in \cite[Theorem 3.5]{BIMinexact2013}, it is seen that the assumptions can be weakened by just assuming that $\sigma$ verifies \eqref{prop}. Since the proof is similar to that in \cite[Theorem 3.5]{BIMinexact2013}, we omit it.

\begin{theorem}\label{primalCvg}
Let $\{(x_k,z_k)\}$ and $\{(y_k,c_k)\}$ be the sequences generated by {\rm DSG-2}. Suppose that the parameter sequence $\alpha_k\ge \bar \alpha>0$. Assume that $\sigma$ verifies \eqref{prop}. Then only one of the following cases occurs:\\
\noindent (a) There exists a $\bar{k}$
such that {\rm DSG-2} stops at iteration $\bar{k}$. As a
consequence $x_{\bar{k}}$ and $(y_{\bar{k}},c_{\bar{k}})$ are
$\epsilon$-optimal primal and $\epsilon$-optimal dual
solutions, respectively. In this situation $\{(y_k,c_k)\}$ must be bounded.

\noindent (b) The dual sequence $\{(y_k,c_k)\}$ is unbounded. In this case, $\{z_k\}$ converges weakly to $0$, $\{q_k\}$ converges to $M_P$, the primal sequence $\{x_k\}$ is bounded and all its weak accumulation points are primal solutions.
\end{theorem}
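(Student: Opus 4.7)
The plan is to split on the dichotomy and show the two cases are exhaustive. For case (a), if DSG-2 stops at some iteration $\bar k$, then Step 1(b) forces $z_{\bar k}=0$ with $r_{\bar k}\le \epsilon$; invoking Proposition \ref{gradz0}(ii) on $(x_{\bar k},z_{\bar k})\in X_{r_{\bar k}}(y_{\bar k},c_{\bar k})$ under the standing strong duality $M_P=M_D$ then yields $\epsilon$-primal and $\epsilon$-dual optimality, while the finite dual sequence is trivially bounded. This disposes of (a) quickly.

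For the alternative, I would suppose that $z_k\neq 0$ for every $k$ and first argue that the dual sequence must be unbounded. The DSG-2 stepsize gives $s_k\sigma(z_k)\ge\theta_k$, so $\sum_k s_k\sigma(z_k)\ge\sum_k\theta_k=+\infty$; the equivalence of (a) and (b) in Proposition \ref{dualSeq} then rules out a bounded $\{(y_k,c_k)\}$, placing us squarely in case (b).

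All three claims in (b) reduce to the single technical fact $\sigma(z_k)\to 0$. I would derive this from the estimate \eqref{lemequat2} in Lemma \ref{estimate}(a): fixing $\tilde r$ an upper bound on the vanishing sequence $\{r_k\}$ and using $q_k\le M_P$, $\alpha_j\ge\bar\alpha>0$, together with $s_j\sigma(z_j)\ge\theta_j$, one obtains
\[
\sigma(z_k)\,\bar\alpha\sum_{j=0}^{k-1}\theta_j\;\le\;\sigma(z_k)\sum_{j=0}^{k-1}\alpha_j s_j\sigma(z_j)\;\le\;q_k-q_0+r_k\;\le\;M_P-q_0+\tilde r,
\]
whose right-hand side is finite while $\sum_j\theta_j=+\infty$, forcing $\sigma(z_k)\to 0$. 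Property \eqref{prop} then gives boundedness of $\{z_k\}$, and for any weak cluster point $\tilde z$ (which exists by Theorem \ref{th:3.18}) the w-lsc of $\sigma$ combined with Definition \ref{def:assumptions_sigma}(b) forces $\sigma(\tilde z)=0$ and hence $\tilde z=0$. Uniqueness of the cluster point in a bounded sequence then gives $z_k\rightharpoonup 0$.

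Once $z_k\rightharpoonup 0$ is in hand, Lemma \ref{zPrimalCvg} supplies the rest of (b) essentially for free: $\{q_k\}$ converges to $\bar q=M_P$, $\{x_k\}$ is bounded, and its weak accumulation points belong to $S(P)$. The one step I expect to be non-routine is establishing $\sigma(z_k)\to 0$: it is the bridge from the divergent series $\sum\theta_k$ and the primal--dual estimate \eqref{lemequat2} into the primal convergence machinery, and every other assertion in the theorem follows from already-established lemmas.
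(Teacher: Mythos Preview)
Your proposal is correct and follows essentially the same approach as the paper, which defers to \cite[Theorem 3.5]{BIMinexact2013}: case (a) is handled via Proposition~\ref{gradz0}(ii), the unboundedness in case (b) comes from $s_k\sigma(z_k)\ge\theta_k$ combined with Proposition~\ref{dualSeq}, the key step $\sigma(z_k)\to 0$ is extracted from estimate~\eqref{lemequat2} exactly as you do, and the remaining conclusions are delegated to Lemma~\ref{zPrimalCvg} after using~\eqref{prop} and the w-lsc of $\sigma$ to obtain $z_k\rightharpoonup 0$.
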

\begin{proof} Similar to \cite[Theorem 3.5]{BIMinexact2013}.
\end{proof}
This directly gives the following result.

\begin{corollary}\label{boundedDualIFF-2}
Let $\{(x_k,z_k)\}$ and $\{(y_k,c_k)\}$ be the sequences generated by {\rm DSG-2}. Suppose that the parameter sequence $\alpha_k\ge \bar \alpha>0$. Assume that $\sigma$ is either coercive or conditionally coercive. Then the conclusion of Theorem \ref{primalCvg} hold.
\end{corollary}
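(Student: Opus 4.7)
The plan is to reduce this directly to Theorem \ref{primalCvg}, mirroring the analogous corollary stated for DSG-1 right after Theorem \ref{primalCvg1}. Theorem \ref{primalCvg} assumes only that the augmenting function $\sigma$ satisfies property \eqref{prop}, namely that every sequence $\{w_k\}$ with $\sigma(w_k)\downarrow 0$ must be bounded. Thus the whole task reduces to showing that both coercive and conditionally coercive augmenting functions fall under this single umbrella assumption.

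First I would deal with the conditionally coercive case (Definition \ref{def:assumptions_sigma}(a')). Suppose $\sigma(w_k)\downarrow 0$. Since the sequence decreases to $0$, there is some $k_0$ such that $\sigma(w_k)\le K_\sigma$ for every $k\ge k_0$, where $K_\sigma>0$ is the constant from Definition \ref{def:assumptions_sigma}(a'). Therefore $\{w_k\}_{k\ge k_0}\subset lev_\sigma(K_\sigma)$, and the latter set is bounded by the very definition of conditional coercivity. Hence the tail (and therefore the whole sequence) is bounded, so \eqref{prop} holds.

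Next I would observe that coercivity (Definition \ref{def:assumptions_sigma}(a)) implies conditional coercivity, so the same conclusion holds. Indeed, by Corollary \ref{cor:coer}, if $\sigma$ is w-lsc and coercive then every one of its level sets is weakly compact and, in particular, bounded by Corollary \ref{cor:CCB}. We may then pick any $K_\sigma>0$ to satisfy the requirement in Definition \ref{def:assumptions_sigma}(a'), and the previous paragraph applies.

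Combining these two observations, under either assumption on $\sigma$ the property \eqref{prop} is verified, so all the hypotheses of Theorem \ref{primalCvg} are in force and its conclusion applies verbatim to the sequences generated by DSG-2. There is no real obstacle here; the only thing to be careful about is not to appeal to strong duality unnecessarily (Theorem \ref{primalCvg} already incorporates whatever duality is needed in its case (b) conclusion, and under our standing hypotheses Theorem \ref{th:StrongDual} guarantees $M_P=M_D$ anyway).
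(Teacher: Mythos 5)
Your proposal is correct and follows essentially the same route as the paper, which likewise reduces the corollary to Theorem \ref{primalCvg} by noting that both coercivity and conditional coercivity imply condition \eqref{prop}; you simply spell out the (correct) details that the paper leaves implicit.
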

\begin{proof}
The claim follows from the fact that both coerciveness and conditional coerciveness imply condition \eqref{prop}.
\end{proof}

\section{Acknowledgements}
We thank C. Yalcin Kaya for useful discussions on various aspects of the DSG algorithm.

\section{Concluding Remarks}\label{sec:conclusion}
We provide a detailed analysis of primal-dual problems in infinite dimensions, when the dual problem is obtained by using a Lagrangian that involves a map $A$ in the linear term (see \eqref{eq:Lagrn} in Definition \ref{def:basic}), and an augmenting function which, in some cases, does not need to be coercive (see Definition \ref{def:assumptions_sigma}(a')). For such primal-dual pairs, we establish strong duality. Moreover, we show that the inexact DSG method has the same primal and dual convergence properties as the case when $A$ is the identity map. Namely, we show that every accumulation point of the primal sequence is a primal solution, and that, under certain technical assumptions, the dual sequence converges strongly to a dual solution (see Theorem \ref{dualconv}).

Our analysis opens the way for the application of DSG to challenging optimal control problems, which are infinite dimensional optimization problems. It is particularly interesting to apply the results of the present paper to the pure penalty method, i.e., when $A=0$. The latter case is not covered by the results in \cite{BKMinexact2010}.

\def\cprime{$'$}

\end{document}